\renewenvironment{abstract}
{\small\vspace{-1em}
\begin{center}
\bfseries\abstractname\vspace{-.5em}\vspace{0pt}
\end{center}
\list{}{
\setlength{\leftmargin}{0.6in}%
\setlength{\rightmargin}{\leftmargin}}%
\item\relax}
{\endlist}
\declaretheorem[name=Theorem, numberwithin=section]{theorem}
\declaretheorem[name=Lemma, sibling=theorem]{lemma}
\declaretheorem[name=Conjecture, sibling=theorem]{conjecture}
\declaretheorem[name=Claim, sibling=theorem]{claim}
\def\cqedsymbol{\ifmmode$\lrcorner$\else{\unskip\nobreak\hfil
\penalty50\hskip1em\null\nobreak\hfil$\lrcorner$
\parfillskip=0pt\finalhyphendemerits=0\endgraf}\fi}
\def\A{\mathcal{A}} 
\let\le\leqslant
\let\ge\geqslant
\let\geq\geqslant
\title{Square coloring planar graphs with automatic discharging\thanks{This work was supported by ANR project GrR (ANR-18-CE40-0032)}}
\author[2]{Nicolas Bousquet}
\author[2]{Quentin Deschamps}
\author[1]{Lucas de Meyer}
\author[2]{Théo Pierron}
\affil[1]{Département Informatique, ENS Rennes}
\affil[2]{Univ. Lyon, Université Lyon 1, LIRIS UMR CNRS 5205, F-69621, Lyon, France}
\date{\today}
\begin{document}

\maketitle

\begin{abstract}
    The discharging method is a powerful proof technique, especially for graph coloring problems. Its major downside is that it often requires lengthy case analyses, which are sometimes given to a computer for verification. However, it is much less common to use a computer to actively look for a discharging proof. In this paper, we use a Linear Programming approach to automatically look for a discharging proof. While our system is not entirely autonomous, we manage to make some progress towards Wegner's conjecture for distance-$2$ coloring of planar graphs, by showing that $12$ colors are sufficient to color at distance $2$ every planar graph with maximum degree~$4$.
\end{abstract}

\section{Introduction}

The Four Color Theorem, stating that every planar map can be colored with $4$ colors in such a way that no adjacent regions receive similar colors, is famous for having one of the first computer assisted proofs. Even if this breakthrough from~\cite{appel1976every} encountered some philosophical opposition about the nature of proofs, using computers to prove mathematical statements became more and more popular in the last decades. In particular, many researchers followed the steps of Appel and Haken and used computed-aided proofs with the so-called \emph{discharging method}, but also in various fields of mathematics and computer science such as complex analysis~\cite{lanford1982computer}, geometry~\cite{brakensiek2020,hales2017,lam_thiel_swiercz_1989}, game theory~\cite{rokicki2010god}, logic~\cite{dahn1998robbins} and, combinatorics~\cite{heule2016solving,heule2018schur,mcguire2014there,szekeres_peters_2006} just to cite a few. 

Most of these results make great use of the ability of computers to perform efficiently some computations, in particular exhaustive search and checks, much quicker than a human (and probably more reliably). Therefore, it is not surprising that many computer assisted proofs rely on such mechanics (either directly or using arguments that permit to reduce the proof to the study of finitely many cases), see~\cite{appel1976every,heule2016solving,lam_thiel_swiercz_1989,mcguire2014there,rokicki2010god,szekeres_peters_2006}. Broadly speaking, for these results, the authors prove that the statement can be reduced to checking finitely many cases and then design an \emph{ad hoc} program that goes through all the required cases (often using symmetries to consider only a reasonable amount of cases).

Another way to use computers to build proofs consists in using enumeration, whose goal is to generate efficiently all possible structures (such as graphs) with specific properties. 
On a first hand, it permits to test some conjectures on small graphs and to discard many false statements, see e.g.~\cite{brinkmann2013generation,brinkmann:hal-00990486}.  On the other hand, one can also ask the computer to generate graphs with specific properties, which can be then used by the human as gadgets to build counterexamples (see e.g.~\cite{cohen2017steinberg}) or hardness reductions (see e.g.~\cite{mulzer2008minimum}).
Again, in all these examples, most of the proof is still done by the human, who has to find the desirable properties, identify good patterns in the graphs output by the computer, and combine them in a suitable way. The computer is thus limited to an oracle whose power is to output quite naively some constructions, without strong warranty that they will be of any use. 
Some other ways to use computers have been introduced, by translating the original problem into instances of other problems such as SAT~\cite{brakensiek2020,heule2018schur}, linear programming~\cite{hales2017} or semi-definite programming~\cite{grzesik2015flag,silva2016flag}.
A last family of computer assisted proofs contains the ones obtained using proof assistants such as Coq. In that case, the computer work consists only in helping to check step by step a mathematical proof, and to make the writing of such a proof more comfortable.

\paragraph{Automatic discharging proofs.}
Our goal in this paper is to obtain discharging proofs using a computer. The discharging method is a very useful tool in graph theory, especially to prove coloring results on sparse classes of graphs. While powerful, this method has a major downside of often requiring lengthy case analyses. Therefore, it is not surprising that computers are used to automatically go through all the cases, see for example~\cite{hartke2016chromatic,bonamy2019every}. However, in all these cases, the computer only has a passive role: the formalization of the problem and the structure of the proof is provided by the human while the computer is only used to check/generate all the cases. The examples where the human gives the statement to prove and where the machine then plays an active role in the proof construction are, as far as we know, much rarer. One can think of~\cite{dahn1998robbins}, where the computer is used to search for a proof by generating consequences of the axioms until it reaches the sought conclusion. 

Regarding the discharging method, we would like to obtain a system that, after a small modelization step performed by the human, would be able to completely generate the proof by itself, instead of just checking some case analysis. Such a system has already been used, for example in~\cite{dvovrak2021cyclic,hebdige2016third}, but the underlying generative programs are not formally discussed nor published (only checkers are provided). In this paper, our main contribution consists in presenting such a program, that uses Linear Programming to construct a discharging argument and making our code available online.  In particular our program gives an automatic way to find discharging rules, which is left open in~\cite{talon2020intensive} and considered as ``\textit{a big step forward in the field of automatic proofs for planar graphs}''. Moreover, our approach is generic, in the sense that it can be reused for many types of problems or graph classes.

A formal description of discharging proofs and our method is provided in Section~\ref{sec:overview}. From a high level perspective, our program will, at every step, either find a set of discharging rules ensuring that the result holds or output a set of substructures called configurations. When the second option occurs, the human (with the help of a second independent algorithm in this work) has to prove that one of configurations can be ``reduced'' (this will be formally defined later). We can then re-use our system by taking into account the reducibility of new configurations. We repeat this back and forth process until the system stabilizes. When stabilization is reached, either we get a full discharging proof, or we cannot reduce any output configuration (and we have to rethink our modeling).

A rather similar system has been used in another context in~\cite{stolee2014automated} for studying identifying codes in grids, except that the process is one-shot: the author simply model his problem into a Linear Program and does not use a back-and-forth procedure.

In~\cite{la2022computer}, the authors very recently provided a computer assisted discharging proof. Their program allows, once a set of rules is given, to generate the configurations to reduce. The main difference with our process is that determining a suitable set of rules is done by the human in their case while this step is automatic in our approach. 

While our system is not yet completely autonomous (since it needs the intervention of a human at the end of every round of our algorithm), we believe it is a first step towards a fully automated discharging prover, that would be able (with enough computing power) to reprove many discharging results from the literature. We discuss possible further works to reach that goal in conclusion.

\paragraph{Our results.}
As a proof of concept, we use our system to improve the best known result of the conjecture of Wegner on distance-$2$ chromatic number of \emph{planar graphs} of maximum degree $4$. A \emph{distance-$2$ coloring} of a graph $G$ is an assignment of colors to the vertices of $G$ such that no two vertices within distance at most $2$ receive the same color. The minimum number of colors required for such a coloring to exist is denoted by $\chi_2(G)$. This is equivalent to a (standard) coloring of the graph $G^2$, obtained from $G$ by adding edges between vertices with a common neighbor. Wegner's conjecture states an upper bound on $\chi_2(G)$ depending on the maximum degree $\Delta(G)$. 

\begin{conjecture}[Wegner~\cite{wegner1977graphs}]
Every planar graph G with maximum degree $\Delta$ satisfies:
$$
\chi_2(G) \le \left\{
    \begin{array}{ll}
        7 & \mbox{if } \Delta = 3, \\
        \Delta + 5 & \mbox{if } 4 \le \Delta \le 7, \\
        \lfloor \frac{3\Delta}{2} \rfloor  + 1 & \mbox{if } \Delta \ge 8.
    \end{array}
\right.
$$
\end{conjecture}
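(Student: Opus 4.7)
The plan is to treat the three regimes $\Delta=3$, $4\le\Delta\le 7$, and $\Delta\ge 8$ separately, each by a discharging argument on a minimum counterexample $G$ to the relevant bound $k(\Delta)$, fed into the automatic LP-based discharging framework of the paper. In every case I would start from the standard minimality setup: choose $G$ minimizing $|V(G)|+|E(G)|$ subject to $\chi_2(G)>k(\Delta)$, observe that $G$ is $2$-connected, and derive local reducibility statements of the form "if a subgraph $H\subseteq G$ admits, after deleting some vertex set $S$, an extension to a distance-$2$ $k$-coloring by a greedy/list argument, then $H$ is forbidden in $G$". The initial pool of such reducible configurations comes from the usual degree-sum computation: at a vertex $v$, the number of already-used colors in the closed second neighborhood is at most $\sum_{u\in N(v)}(\deg(u)-1)+\deg(v)$, and if this is strictly less than $k(\Delta)$ then $v$ can be freed.

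Next I would apply Euler's formula with initial charges $\mu(v)=\deg(v)-4$ and $\mu(f)=|f|-4$, summing to $-8$, and hand the resulting discharging problem — degree regime, value of $k(\Delta)$, and current list of reducible configurations — to the automatic system described in Section~\ref{sec:overview}. The LP either returns a family of discharging rules certifying the bound, or produces a list of configurations that it cannot discharge. For each such configuration I would invoke the second, independent reducibility algorithm of the paper (or, failing that, a hand argument) to prove reducibility, add the result to the pool, and re-run the LP, iterating until stabilization. For $\Delta\ge 8$, the asymptotic results of Havet, van~den~Heuvel, McDiarmid and Reed already give $(3/2+o(1))\Delta$ colors, so only finitely many residual cases need to be cleared by LP-discharging, and I expect the automatic step to close them essentially out of the box. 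For $4\le\Delta\le 7$, I would extrapolate the paper's $\Delta=4$, $k=12$ argument: the same charge function and the same generation of reducible configurations should, after more LP rounds, yield the sharper bound $\Delta+5$, the main human input being to seed the reduction pool with configurations tailored to the parity of $\Delta$.

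The main obstacle is the case $\Delta=3$, $k=7$. Here $\Delta(G^2)\le 9$ while we have only $7$ colors, so the naive color-degree bound never triggers, and reducibility must exploit that many second-neighbors of $v$ coincide or can share a color in $G^2$. Wegner's own tight construction shows that reducible configurations in the subcubic setting are very restricted, and the existing literature still only yields bounds around $7$--$8$ with substantial human input. I expect the LP to produce a large residual list of "fan"-type and "theta"-type configurations around $3$-faces and $4$-faces that resist greedy reduction; reducing them will require either a list-coloring strengthening (proving $\chi_{2,\ell}(G)\le 7$ so that Hall-type arguments apply), or inserting problem-specific structural lemmas — for instance, that two adjacent vertices of degree $3$ cannot both lie on two triangles — into the reducibility oracle before the LP can finish. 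This is the step where I would expect the automatic pipeline to stall and where genuinely new ideas, beyond formalizing the discharging search as an LP, are needed.
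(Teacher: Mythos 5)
The statement you are trying to prove is Wegner's conjecture itself, which is an open problem; the paper does not prove it and only records it as a conjecture, its actual contribution being the much weaker partial result $\chi_2(G)\le 12$ for planar graphs with $\Delta=4$ (the conjecture asks for $\Delta+5=9$ in that case). Your proposal is therefore not comparable to a proof in the paper, and on its own terms it is a research programme, not a proof: every regime rests on the hope that the LP-discharging loop ``closes'' cases that are precisely the open content of the conjecture. Concretely, for $4\le\Delta\le 7$ you claim that extrapolating the paper's $\Delta=4$ argument ``should, after more LP rounds, yield the sharper bound $\Delta+5$''; but the authors' pipeline, run on exactly this problem, stabilizes only at $12$ colors, three above the conjectured value, and nothing in your sketch supplies the new reducible configurations or rule templates that would bridge that gap. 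The bottleneck is not the discharging search (which the LP automates) but the reducibility of the tight configurations, and reducibility arguments for $k$ close to $\Delta+5$ fail the naive color-count you propose as the seed ($\sum_{u\in N(v)}(\deg(u)-1)+\deg(v)$ is already $\ge k$ for the relevant vertices), so the loop has nothing to remove and $\alpha$ never reaches $4$.

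The other two regimes have the same defect. For $\Delta\ge 8$, the asymptotic result $\chi_2\le(3/2+o(1))\Delta$ contains an uncontrolled additive error (the known explicit bounds are of the form $\lceil 5\Delta/3\rceil+O(1)$ or $3\Delta/2$ plus a large constant), so it does not reduce the conjecture to finitely many values of $\Delta$, let alone to instances an LP round could ``close essentially out of the box''; for each fixed $\Delta\ge 8$ the exact bound $\lfloor 3\Delta/2\rfloor+1$ is open. For $\Delta=3$ you concede the pipeline stalls; ironically this is the one case that is actually settled, by Thomassen's (non-automatic) proof cited in the paper, so your plan proves nothing new there and fails everywhere else. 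In short, there is a genuine gap: the proposal identifies no new reducible configurations, no new recoloring or identification arguments, and no mechanism by which the automated framework would surpass the bound $12$ it is known to produce, so none of the three cases of the conjecture is established.
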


Despite many partial results, only the case $\Delta=3$ has been proven~\cite{thomassen2001applications}. The conjecture is also known to be asymptotically true (and tight) for large values of $\Delta$~\cite{havet2017list,amini2013unified}. For a up-to-date overview of the best values depending on $\Delta$, we refer the reader to~\cite{bousquet2021improved}.

In this paper, we actually focus on the case $\Delta=4$. In that case, a greedy procedure shows that $17$ colors always suffice. Brooks' theorem brings down this bound to $16$ unless we consider a graph with $17$ vertices and diameter $2$, but this cannot hold for a planar graph. This result can again be improved to $15$ by a result from~\cite{cranston2016painting}. The bound was then brought down successively to $14$ and $13$ in~\cite{cranston2014choosability,zhu2021wegner}. We pursue this line of work by showing the following:

\begin{theorem}\label{thm:dist2}
Every planar graph $G$ of maximum degree $4$ satisfies $\chi_2(G) \le 12$.
\end{theorem}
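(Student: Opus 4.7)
The plan is to argue by contradiction: suppose $G$ is a vertex-minimum planar graph with $\Delta(G) \le 4$ and $\chi_2(G) \ge 13$. Following the scheme the paper advocates, I will combine a library of \emph{reducible configurations} (local substructures that cannot appear in $G$ by minimality) with a \emph{discharging argument} whose rules are produced by a linear program.

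The first step is to build the reducibility library. The basic observation is that a vertex $v$ has degree $d_{G^2}(v) \le \sum_{u \in N_G(v)} (d_G(u)-1) + d_G(v)$, so any $v$ whose ball of radius $2$ contains at most $12$ vertices (itself included) can be removed, and after coloring $G-v$ by minimality a free color remains for $v$. This already forces $\delta(G) \ge 3$ and places constraints on degree-$3$ vertices (their neighbors' degrees must sum to at least $10$). Finer reductions involve pairs or chains of low-degree vertices, triangles incident to low-degree vertices, and local Kempe-style exchanges; some of these we prove by hand, while, following the paper, the rest are discovered by the auxiliary algorithm that searches for extension strategies in $G^2$.

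The second step is discharging. Each vertex $v$ gets initial charge $d_G(v)-4$ and each face $f$ gets $d_G(f)-4$, so by Euler's formula the total charge equals $-8$. Since $\Delta=4$, all positive initial charge lives on faces of length $\ge 5$, and has to be routed to $3$-vertices and triangles. Rather than hand-picking rules, I would instantiate a finite family of candidate local rules (sending a fractional amount of charge across bounded-length paths in the planar embedding) as variables in an LP, together with one constraint per non-reducible local configuration saying that the final charge at its center is nonnegative. Iteratively: if the LP is feasible, its solution is the sought discharging proof and the contradiction $0 \le \sum \mu^*(x) = -8$ is reached; if infeasible, the dual certificate exhibits a configuration where positive excess charge is unavoidable, which we then try to add to the reducibility library and resubmit to the LP.

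The main obstacle will be closing the loop between LP infeasibility and genuine reducibility. The $\Delta=4$, $12$-color regime is tight: a $4$-vertex surrounded by four $4$-vertices may a priori see up to $16$ vertices in $G^2$, so none of the dense configurations is reducible by the naive greedy argument, and proving them reducible requires carefully chosen double deletions and Kempe swaps that respect the planar embedding. Controlling the interplay of short faces (triangles and $4$-faces) with clusters of $4$-vertices---where second-neighborhood overlaps drop the $d_{G^2}$ count just enough for extension arguments to succeed---is where I expect the bulk of the delicate case work to concentrate before the LP finally certifies a valid set of rules.
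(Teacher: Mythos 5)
Your outline follows essentially the same strategy as the paper (minimal counterexample, a library of reducible configurations, charges $\deg(x)-4$ summing to $-8$ by Euler's formula, and an LP that searches for local discharging rules, iterating between infeasibility certificates and new reductions), but as written it is a programme rather than a proof: every step that actually makes the theorem true is deferred. Concretely, (i) the LP you describe has infinitely many constraints, since there is one local configuration per face of each degree and face degrees are unbounded; the paper must trim it by capping face degrees at ``$6^+$'' and hard-coding the rules \textbf{T1}--\textbf{T6} for large faces, together with a separate lemma showing that $6^+$-faces finish with non-negative charge unless a configuration from a finite set $\mathcal{C}_{6^+}$ occurs; you do not address how finiteness is recovered. (ii) The unavoidable set must actually be exhibited and each of its members proved reducible: in the paper this amounts to $40$ configurations, of which $28$ are handled by the heuristic and $12$ require manual arguments resting on a nontrivial auxiliary theorem about $4$-balanced separating cycles of length at most $5$ (whose proof uses color permutations and careful recolorings), plus a final argument that every configuration of $\mathcal{C}_{6^+}$ contains one of $\mathcal{C}$. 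You explicitly leave all of this as ``the bulk of the delicate case work,'' so the core of the theorem is missing rather than proved by a different route.

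There is also a concrete flaw in the one reduction you do state. You claim that any vertex $v$ whose ball of radius $2$ contains at most $12$ vertices can simply be deleted, and that a distance-$2$ coloring of $G-v$ obtained by minimality extends to $v$. But a distance-$2$ coloring of $G-v$ need not restrict to a partial distance-$2$ coloring of $G$: two neighbors of $v$ are within distance $2$ in $G$ only through $v$, so in $G-v$ they may legitimately receive the same color, and then no choice of color at $v$ repairs that conflict. This is exactly why the paper's reductions always delete a vertex \emph{and} add edges among its former neighbors (e.g.\ $G-v$ plus an edge joining the two neighbors of a $2$-vertex, or $G-a+bd+ce$ in the later configurations), and such edge additions are only available when planarity and the maximum-degree-$4$ cap allow them -- for a $3$- or $4$-vertex one cannot in general make its neighborhood pairwise distance-$\le 2$ without pushing some neighbor above degree $4$. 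In particular your claimed consequence that the neighbor degrees of every $3$-vertex must sum to at least $10$ does not follow from the argument as stated, and the paper proves no such statement; it only establishes $2$-connectivity and minimum degree $3$ before passing to the LP-generated configurations.
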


\section{Preliminaries}
Given a graph $G$, we denote by $V(G)$ and $E(G)$ its vertex set and edge set, respectively. Note that the graphs we consider do not contain any loop or parallel edges. A graph $G$ is planar if its vertices and edges can be embedded in the plane in such a way no edges cross except maybe on vertices when they have this vertex as a common endpoint. When such an embedding is fixed, we denote by $F(G)$ the set of faces of $G$.

Let $v\in V(G)$. We denote by $N_G(v)$ the neighborhood of $v$ in $G$, \emph{i.e.} the set of vertices adjacent to $v$. The degree $\deg_G(v)$ of $v$ is $|N_G(v)|$. We extend this notion to faces by defining $\deg_G(f)$ as the number of edges incident to $f$, counted with multiplicity.

For an integer $d$, a vertex is said to be a \emph{$d$-vertex} (respectively, a \emph{$d^-$-vertex}, a \emph{$d^+$-vertex}) if its degree is equal to $d$ (resp., at most $d$, at least $d$). 
Likewise, a face is said to be a \emph{$d$-face} (resp., a \emph{$d^-$-face}, a \emph{$d^+$-face}) if its degree is equal to $d$ (resp., at most $d$, at least $d$).

Let $X \subseteq V(G)$. We denote by $G[X]$ the subgraph induced by the set of vertices $X$.
The graph obtained from $G$ by removing a vertex $v$ and its incident edges is denoted by $G-v$.
Note that $G - v = G [ V \setminus v]$.
Let $u, v$ in $V$.
By $G + uv$, we denote the graph $G'$ such that $V(G') = V(G)$ and $E(G') = E(G) \cup \{uv\}$, i.e., the graph obtained from $G$ by adding an edge between $u$ and $v$ if it does not already exist.

We end this section with coloring-related definitions. A \emph{partial distance-2 coloring} on a set of vertices $X$ of a graph $G$ is a coloring of $G[X]$ such that every pair of vertices at distance $2$ in $G$ are colored differently. In particular, when $X=V(G)$, a partial coloring is a coloring.

For each $v\in V(G)$, let $L(v)$ be a finite list of colors. We say that $G$ is \emph{$L$-choosable} if one can find a coloring of $G$ where each vertex $v$ receives a color from $L(v)$.  Note that when all the lists are equal, we recover the standard notion of coloring.  Let $\ell: V(G) \mapsto \mathbb{N}$ be a function. We say that $G$ is \emph{$\ell$-choosable} if, for any assignment of lists of colors $v\mapsto L(v)$ with $|L(v)|=\ell(v)$, $G$ is $L$-choosable.

\section{Overview of the proof}
\label{sec:overview}
The discharging method is a widely used proof technique, especially suited for obtaining results of the form ``\emph{every graph from the class $\mathcal{F}$ satisfies some property $\Pi$}'' (see~\cite{cranston2017introduction} for a survey on the discharging method). Usually, $\mathcal{F}$ is a class of sparse graphs. In our case, $\mathcal{F}$ is the class of planar graphs of maximum degree $4$, and $\Pi$ is the property $\chi_2(G)\leqslant 12$. 

We first consider the structure of the graphs in $\mathcal{F}$ to show that $G$ must contain some patterns (usually partially induced subgraphs with degree conditions) called \emph{configurations} (we say that the set of configurations is \emph{unavoidable}). To this end, 
we use a standard method called discharging. It consists in double counting some suitable quantity defined on a graph $G\in\mathcal{F}$. More precisely, we give some charges to some elements of $G$ (usually vertices and faces) such that the total charge, i.e., the sum of all the charges, is negative. We then redistribute the charges in $G$ to other vertices and faces (while preserving the total charge) with some \emph{discharging rules}. 
We then show that if some graph $G$ does not contain any configuration from our set of configurations, the final charge of each vertex and face of $G$ is non-negative. Since the initial total charge is negative and we keep charge when discharging, we obtain that our set of configurations is unavoidable. 

We then have to show that our set of unvoidable configurations are \emph{reducible}. Informally, it means that, for every graph $G\in\mathcal{F}$ containing one of them, one can construct a smaller graph $H$ such that if $H$ has a valid coloring, then so does $G$. Proving that all our configurations are reducible ends the proof. Indeed, consider a smallest counterexample to the sought result, \emph{i.e.} a graph $G\in\mathcal{F}$ such that $\chi_2(G)>12$ with minimum number of edges and vertices. It must then contain one of our configurations, which is reducible. This means that we can find a graph $H$ smaller than $G$, which would still be a counterexample (otherwise there would be a valid coloring for $H$ and we could extend it to $G$). This contradicts the minimality of $G$ and shows that $G$ cannot exist. 

We propose two algorithms that will automatize some parts of both steps. For the discharging part, our algorithm, which is the main contribution of this paper, aims at finding an unavoidable set of configurations.
More precisely, it tries to find some discharging rules using a Linear Program we will outline in Section~\ref{sec:lp}. Either our algorithm proves that such rules exist or will provide a list of configurations such that at least one of them has to be reduced to be able to reach a contradiction.

As a counterpart to this program, we need to test whether the obtained configurations are reducible. We present in Section~\ref{sec:heur} an algorithm that permits to reduce configurations. This algorithm is an heuristic whose goal consists in trying to prove that a graph is list-colorable. Our heuristic permits to keep a good balance between efficiency and quality (finding an efficient algorithm is hopeless since the problem is NP-complete).

Note that the two programs are not run sequentially: we actually go back and forth between them until we reach some kind of stabilization. Essentially, when our first algorithm outputs a list of candidate of unavoidable configurations, our second algorithm tries to reduce at least one of them. We repeat this process until we converge (see Section~\ref{sec:finalalgo} for more details).

In the following, we denote by $G$ a graph from $\mathcal{F}$ with $\chi_2(G)>12$ which minimizes $(|V(G)|,|E(G)|)$, and we fix an arbitrary embedding of $G$. We first reduce some technical configurations to ensure that our process and upcoming proofs are sound. This includes connectivity issues and vertices of degree at most $2$. In particular, we will assume in the following sections that all our graphs are $2$-connected and have minimum degree $3$.

\begin{lemma}
The graph $G$ is 2-connected.
\end{lemma}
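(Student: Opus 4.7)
The plan is to argue by contradiction: if $G$ were not $2$-connected, I would glue distance-$2$ $12$-colorings of strictly smaller subgraphs of $G$ into one for $G$ itself, contradicting $\chi_2(G) > 12$.

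First I would dispose of the disconnected case. If $G$ is disconnected, each connected component $H$ is a proper subgraph of $G$ lying in $\mathcal{F}$, so by the minimality of $G$ it admits a distance-$2$ coloring using at most $12$ colors. Since no pair of vertices in distinct components is at distance $\leq 2$ in $G$, the disjoint union of these colorings is a valid distance-$2$ coloring of $G$ with $12$ colors, a contradiction.

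So I may assume $G$ is connected; suppose it has a cut vertex $v$. Let $C_1, \dots, C_k$, with $k \geq 2$, be the components of $G - v$, and set $G_i := G[V(C_i) \cup \{v\}]$. Each $G_i$ inherits planarity and the maximum degree bound from $G$, and is strictly smaller than $G$; minimality therefore yields a distance-$2$ coloring $c_i$ of $G_i$ using at most $12$ colors, which I can assume satisfies $c_i(v) = 1$ after permuting colors. The key observation I will rely on is that for any $u, w \in V(G_i)$ one has $d_G(u,w) = d_{G_i}(u,w)$, since any $u$-$w$-walk in $G$ that leaves $V(G_i)$ must do so through the cut vertex $v$ and can be shortened by deleting the outside excursion. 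Consequently, the only pairs at distance $\leq 2$ in $G$ not already handled within some $c_i$ are pairs of neighbors of $v$ belonging to distinct $G_i$'s.

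To finish, for $i = 1, \dots, k$ in turn I would permute the $11$ colors $\{2,\dots,12\}$ in $c_i$ so that the colors assigned to $N_G(v) \cap V(G_i)$ avoid those already used on neighbors of $v$ in $G_1, \dots, G_{i-1}$. Writing $d_i = |N_G(v) \cap V(G_i)|$, this is possible because $\sum_{j \leq i} d_j \leq \deg_G(v) \leq 4 \leq 11$, so at each step there are enough free colors. Concatenating the resulting $c_i$'s then produces a proper distance-$2$ coloring of $G$ with $12$ colors, contradicting $\chi_2(G) > 12$. The only real obstacle is this final gluing step, but the slack between $\deg_G(v) \leq 4$ and the $11$ available non-$1$ colors is so comfortable that no careful case analysis is needed.
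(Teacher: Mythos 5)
Your proof is correct and follows essentially the same route as the paper: decompose $G$ at the cut vertex, invoke minimality to color the pieces, and permute colors so that the colorings agree on $v$ while the neighbors of $v$ in different pieces receive distinct colors. Splitting into all $k$ components instead of the paper's two-piece split $G-C$ and $G[C\cup\{v\}]$, and spelling out that distances are preserved inside each piece, are only cosmetic differences.
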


\begin{proof}
Note that $G$ is connected, otherwise, some connected component of $G$ would be a smaller counterexample than $G$. Assume now that $G$ contains a cut vertex $v$, and denote by $C$ be the vertex set of a component of $G-v$. Note that $G_1 = G - C\in\mathcal{F}$ and $G_2 = G[C \cup \{v \}] \in\mathcal{F}$. Assume that they admit distance-$2$ $12$-colorings, respectively $\alpha_1$ and $\alpha_2$.

We can permute the colors of $\alpha_1$ in such a way $\alpha_1(v) = \alpha_2(v)$ and $\alpha_1(N_{G_1}(v)) \cap \alpha_2(N_{G_2}(v)) = \emptyset$. Now the union of $\alpha_1$ and $\alpha_2$ is a distance-$2$ $12$-coloring of $G$, which completes the proof.
\end{proof}

The main consequence of this lemma is that for every integer $d$, the vertices incident to a $d$-face induce a cycle. In that case, we denote the face by $[v_1,\dots v_d]$. 

\begin{lemma}
\label{lem:deg2}
The graph $G$ has minimum degree 3.
\end{lemma}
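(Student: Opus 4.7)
The plan is to argue by the standard contradiction approach used for the 2-connectivity lemma: take a vertex $v$ of minimum degree in $G$, assume $\deg(v) \le 2$, build a smaller graph $G' \in \mathcal{F}$ from $G$, extract a distance-$2$ $12$-coloring of $G'$ by minimality, and then extend it to $G$ to derive a contradiction. Since the previous lemma gives 2-connectivity, we already have $\delta(G) \ge 2$, so it suffices to forbid degree-$2$ vertices. (The case $|V(G)|\le 2$ is ruled out trivially because $\chi_2(G)\le 2$ then.)

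So let $v$ be a $2$-vertex with neighbors $u_1$ and $u_2$. I would split into two cases depending on whether $u_1u_2\in E(G)$. If $u_1u_2\notin E(G)$, set $G' \eqdef (G-v)+u_1u_2$; otherwise set $G'\eqdef G-v$. In both cases $G'$ is planar (in the first case, route $u_1u_2$ along the path $u_1vu_2$ in the embedding), has no loops or parallel edges, and its maximum degree is at most $4$ (in the first case $\deg_{G'}(u_i)=\deg_G(u_i)$, in the second case it decreases). Moreover $|V(G')|<|V(G)|$, so $G'$ is strictly smaller than $G$ with respect to $(|V|,|E|)$, hence by minimality $G'$ admits a distance-$2$ $12$-coloring $\alpha$.

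Next I would check that $\alpha$ remains a valid distance-$2$ coloring of $G-v$: any two vertices $x,y\in V(G)\setminus\{v\}$ at distance at most $2$ in $G$ are also at distance at most $2$ in $G'$, the only possibly problematic pair being $\{u_1,u_2\}$ which is realised only via $v$. But by construction $u_1u_2\in E(G')$, so $\alpha(u_1)\ne\alpha(u_2)$. It then remains to assign a color to $v$ avoiding the colors of $N_G^2(v)\setminus\{v\}=(\{u_1,u_2\}\cup N_G(u_1)\cup N_G(u_2))\setminus\{v\}$. Since $\Delta(G)\le 4$, this set has size at most $2+(\deg_G(u_1)-1)+(\deg_G(u_2)-1)\le 2+3+3=8$, leaving at least $12-8=4$ available colors. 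Pick any such color for $v$; this produces a distance-$2$ $12$-coloring of $G$, contradicting the fact that $G$ is a counterexample.

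The only potential obstacles are bookkeeping: verifying that $G'$ is really in $\mathcal{F}$ (simplicity is the reason for the case split, planarity follows from the local smoothing of $v$, and the degree bound is immediate), and checking that the distance-$2$ constraints in $G-v$ are indeed preserved when we pass to $G'$. Both are routine once the construction is correctly set up, so I do not expect any genuinely hard step.
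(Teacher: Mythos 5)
Your proposal is correct and follows essentially the same route as the paper: delete the $2$-vertex $v$, add the edge between its two neighbors if absent, invoke minimality to color the smaller graph, and extend to $v$ since at most $8<12$ colors are forbidden. The only cosmetic difference is that you dispose of $0$- and $1$-vertices via the $2$-connectivity lemma, while the paper simply calls those cases straightforward.
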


\begin{proof}
We only prove the result for 2-vertices, the other cases being straightforward. Assume that $G$ contains a 2-vertex. Let $G'$ be the graph obtained from $G$ by adding an edge between the neighbors of $v$ (if not already present) and removing $v$. 

Note that $G'\in\mathcal{F}$. By minimality, $G'$ admits a distance-$2$ $12$-coloring $\alpha$. Then $\alpha$ is a distance-$2$ coloring of $G-v$, and at most $8$ colors are present within distance $2$ from $v$. Therefore, there always exists an available color to extend $\alpha$ to $v$. 
\end{proof}

\section{Unavoidable set of configurations}
\label{sec:lp}
\subsection{Encoding rules as a Linear Program}

To find the discharging rules and the corresponding set of unavoidable configurations, we implement an algorithm based on linear programming. This algorithm, given a set of configurations $\mathcal{C}$, returns, if it exists, a set of discharging rules that ensures that the final charge of every face and vertex not appearing in a configuration in $\mathcal{C}$ is non-negative. If this set does not exist, then the set of forbidden configurations must be updated and the algorithm will give us new configurations, some of them having to be reduced and added to our set $\mathcal{C}$.

We encode some discharging rules using the variables of a Linear Program to represent the amount of charge displaced. We encode the fact that the final charges are non-negative in the constraints of the program. There exists infinitely many ways to transfer rules from vertices/faces to other vertices/faces. In this paper, we focus on so-called local discharging rules where charge is transferred locally to close vertices and faces. In order to avoid combinatorial blow up, we will actually assume that we transfer charge only to adjacent vertices or faces. One can indeed consider more general rules but have to pay a price in terms of running time.
Note that most of the statements using the discharging method only use this type of rules. Also note that it does not reduce the problem into a finite problem since there might exist infinitely many types of neighborhoods, and the charges are real numbers.

\paragraph{Initial charges.}
Our initial charge distribution is given as follows: each vertex and face $x$ receives $\deg(x)-\alpha$, where $\alpha$ is a variable of our linear program we want to maximize. The constraints of the Linear Program will depend on a set of configuration $\mathcal{C}$ that will be formally defined later. Informally speaking, the set $\mathcal{C}$ will correspond to the list of reducible configurations which cannot appear in a minimal counter-example. So, for each configuration of $\mathcal{C}$, we will be able to remove some constraints in the Linear Program.

\begin{theorem}
\label{thm:LP}
Let $P$ be the linear program created from a set $\mathcal{C}$ of configurations, and $\alpha^*$ be the optimal value of $\alpha$ in $P$. Then there exists a set of discharging rules such that, when applied to any graph $G\in\mathcal{F}$ avoiding $\mathcal{C}$, all vertices and faces $x$ end with non-negative weight if their initial weight was $\deg_G(x)-\alpha^*$.
\end{theorem}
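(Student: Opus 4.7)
The plan is to view the linear program $P$ as a direct encoding of the desired discharging rules, so that the theorem becomes a matter of reading off an optimal solution and verifying that the LP constraints say exactly what one wants about final charges.

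First I would fix the semantics of $P$. Apart from the objective $\alpha$, every variable $x_r\ge 0$ corresponds to a \emph{candidate rule} $r$, itself determined by a sender (a vertex or a face with a specified local neighborhood pattern), a recipient (an adjacent element with its own pattern), and the amount of charge to be transferred when the pattern is matched. The constraints of $P$ are indexed by the possible \emph{local types} $t$ of a vertex or face that are compatible with $\mathcal{C}$ (meaning that no configuration of $\mathcal{C}$ is forced by having an element of type $t$); each such constraint is of the form
\[
\deg(t)-\alpha \;-\; \sum_{r \text{ sending out of } t} x_r \;+\; \sum_{r \text{ sending into } t} x_r \;\ge\; 0,
\]
which asserts that, starting from charge $\deg(t)-\alpha$ and applying the rules, an element of type $t$ ends with non-negative charge.

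Given this, the proof proceeds by construction. Let $(\alpha^*,(x_r^*))$ be an optimal solution of $P$. Declare the discharging rules to be: for every $r$, transfer $x_r^*$ units of charge along every occurrence of the pattern of $r$. Pick any $G\in\mathcal{F}$ avoiding $\mathcal{C}$ and any $x\in V(G)\cup F(G)$, and let $t$ be the local type of $x$ in $G$. By assumption $G$ avoids $\mathcal{C}$, so $t$ is among the types indexing a constraint of $P$. Since all our rules depend only on local neighborhoods, the total charge sent and received by $x$ in $G$ matches exactly the sums appearing in the constraint of $P$ associated with $t$. That constraint, being satisfied by the optimal solution, therefore says precisely that the final charge of $x$, starting from $\deg_G(x)-\alpha^*$, is non-negative.

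The main obstacle is to make $P$ a \emph{finite} LP while nevertheless capturing every local type that can arise in graphs of $\mathcal{F}$. For vertices this is immediate because $\Delta(G)\le 4$ forces only finitely many possible vertex neighborhood patterns. For faces, whose degree is a priori unbounded, some uniform handling is needed: rules involving a $k$-face are parametrized so that the total charge sent or received by such a face is an affine function of $k$, and correspondingly the constraints for large $k$ reduce to a finite family, the excess $k-\alpha^*$ in the initial charge absorbing any residual deficit. Once this packaging is in place (and this is the only genuinely technical point of the proof), every local type of vertex or face admissible under $\mathcal{C}$ is covered by some constraint of $P$, and the argument above concludes.
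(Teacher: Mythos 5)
Your first half is essentially the paper's own justification of the raw encoding: the constraints $(\star)$, $(\star\star)$ are by construction exactly the statements ``an element with this local configuration, starting from $\deg-\alpha$, ends non-negative,'' and since the rules are local, every vertex/face of a graph avoiding $\mathcal{C}$ falls under one such constraint, so an optimal solution can simply be read off as the rule amounts. The paper dispatches this in one sentence, and your version of it is fine. The genuine content of the theorem, however, is precisely the point you defer as ``packaging'': the linear program that is actually \emph{created} (and solved) must be finite, and your proposed mechanism for large faces does not work as stated. With the paper's rule templates, the charge leaving a $d$-face is not an affine function of $d$: it depends on the whole configuration around the face, and a priori up to $\frac23$ can leave through each incident edge, giving a final charge as low as $d-\alpha^*-\frac{2d}{3}$, which is negative for moderate $d$ even when $\alpha^*=4$. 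So the excess $d-\alpha^*$ does not automatically absorb the deficit, and ``the constraints for large $k$ reduce to a finite family'' is exactly the claim that needs a proof.

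The paper's resolution is the real work behind the theorem: the variables are only kept for $d\in\{5,6^+\}$, the rules for $6^+$-faces are frozen to the constants \textbf{T1}--\textbf{T6} (they are no longer free LP variables), the transferred charge is rerouted through the boundary edges, and Lemma~\ref{lem:6+} shows that at most $\frac13$ transits through each edge \emph{unless} one of finitely many local patterns occurs; those patterns form the set $\mathcal{C}_{6^+}$, which must be contained in $\mathcal{C}$ (and is later shown reducible in Section~\ref{sec:C6+}). This is why the theorem, as the paper uses it, is conditional on $\mathcal{C}\supseteq\mathcal{C}_{6^+}$ --- a hypothesis that never appears in your argument, and without which the statement for the finite LP is false in general. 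A smaller inaccuracy in the same direction: vertex types are not finitely many merely because $\Delta\le 4$; the constraint of a $3$-vertex records the degrees of its three incident faces, which are unbounded, so the same $6^+$ threshold trimming is needed on the vertex side as well.
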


One may consider $\alpha$ as an informal measurement of how far we are from obtaining a proof. For planar graphs, if we find a set $\mathcal{C}$ of reducible configurations such that $\alpha\geqslant 4$, we know that the total charge is negative because of the following equality (derived from Euler's formula). 

\begin{equation}
 \sum_{x \in V(G)\cup F(G)}(\deg(x) - 4) = - 8
 \label{eq:Euler}
\end{equation}

Assuming that Theorem~\ref{thm:LP} holds, we reach a contradiction by applying it to our minimal counterexample $G$.

The end of this section is thus devoted to defining the Linear Program so that Theorem~\ref{thm:LP} holds.

\paragraph{Variables of the linear program.}
We need some definitions on the adjacencies of faces and vertices. An edge $uv$ is \emph{incident} to a face $f$ if $uv$ is an edge of the border of the face. A face $f$ is \emph{edge-adjacent} to a face $f'$ if there is an edge $uv$ called \emph{common edge} such that $uv$ is incident to $f$ and $f'$ (see Figure~\ref{fig:rules} (c)).
A face $f$ is \emph{vertex-adjacent} to a face $f'$ if there is a unique vertex $v$ called \emph{common vertex} such that $v$ is incident to $f$ and $f'$ (see Figure~\ref{fig:rules} (b)).
A face $f$ is \emph{adjacent} to a face $f'$ if $f$ is vertex-adjacent or edge-adjacent to $f'$.
A face $f$ is \emph{adjacent} to a vertex $v$ if $v$ is incident to $f$.

We will allow the following three types of discharging rules, where $d\geqslant 5$, $(\ell,r,\ell',r') \in \mathbb{N}_{\geqslant 3}^4$: 
\begin{itemize}
    \item A $d$-face $x$ transfers to each incident $3$-vertex $y$ a charge $\omega_{d, \ell, 3v,r}$ if $y$ is also incident to an $\ell$-face and a $r$-face (see Figure~\ref{fig:rules}(a)). 
    \item A $d$-face $x$ transfers to each vertex-adjacent $3$-face $y$ a charge $\omega_{d, \ell, 3,r}$ if there are a $\ell$-face edge-adjacent to $x$ and $y$, and also a $r$-face edge-adjacent to $x$ and $y$  (see Figure~\ref{fig:rules}(b)).
    \item A $d$-face $x$ transfers to each edge-adjacent $3$-face a charge $\omega_{d,\ell,\ell',3,r',r}$ if $x$ and $y$ have the common $4$-vertices $u$ and $v$ such that, $u$ is incident to a $\ell$-face edge-adjacent to $x$ and a $\ell'$-face vertex-adjacent to $x$, and $v$ is incident to a $r$-face edge-adjacent to $x$ and a $r'$-face vertex-adjacent to $x$ (see Figure~\ref{fig:rules}(c)).
\end{itemize}
 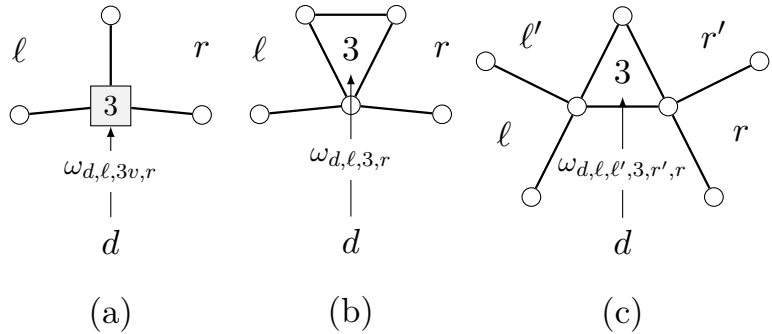
\begin{figure}[hbtp]
        \begin{center}
        \tikzstyle{vertexa}=[circle,draw, minimum size=15pt, scale=1, inner sep=1pt, fill=black!5, very thick]
        \tikzstyle{vertexbc}=[circle,draw, minimum size=15pt, scale=1, inner sep=1pt, fill=black!5]
        \tikzstyle{3vertex}=[draw, minimum size=15pt, scale=1, inner sep=1pt, fill=black!5]
        \tikzstyle{plain}=[circle,draw=none, minimum size=15pt, scale=1.2, inner sep=1pt]
        \tikzstyle{vertex}=[circle,draw, minimum size=7pt, scale=1, inner sep=1pt]
        \tikzstyle{fleche}=[->,>=latex]
        
        \begin{tikzpicture}[scale=1.2]
            \node (b) at (0,-0.1) [vertex] {};
            \node (c) at (1,0) [3vertex] {$3$};
            \node (d) at (2, -0.1) [vertex] {};
            \node (x) at (1, 1) [vertex] {};
            \node (y) at (1, -1.5) [plain] {$d$};
            \node (a1) at (0, 0.62) [plain] {$\ell$};
            \node (a2) at (2, 0.62) [plain] {$r$};
            
            \draw [fleche](y) to  node[pos=0.5,fill=white] {$\omega_{d,\ell,3v,r}$} (c); 
            \draw [-, line width=0.03cm] (b) to (c);
            \draw [-, line width=0.03cm] (c) to (d);
            \draw [-, line width=0.03cm] (x) to (c);

            \node (cap) at (1, -2.3) [plain] {(a)};
        \end{tikzpicture}
            \begin{tikzpicture}[scale=1.2]
            \node (b) at (0,-0.1) [vertex] {};
            \node (c) at (1,0) [vertex] {};
            \node (d) at (2, -0.1) [vertex] {};
            \node (x) at (1.5, 1) [vertex] {};
            \node (z) at (0.5, 1) [vertex] {};
            \node (y) at (1, -1.5) [plain] {$d$};
            \node (a) at (1, 0.62) [plain] {$3$};
            
            \node (a1) at (0, 0.62) [plain] {$\ell$};
            \node (a2) at (2, 0.62) [plain] {$r$};
            
            \draw [fleche](y) to  node[pos=0.4,fill=white] {$\omega_{d,\ell,3,r}$} (a); 
            \draw [-, line width=0.03cm] (b) to (c);
            \draw [-, line width=0.03cm] (c) to (d);
            \draw [-, line width=0.03cm] (x) to (c);
            \draw [-, line width=0.03cm] (z) to (c);
            \draw [-, line width=0.03cm] (z) to (x);

            \node (cap) at (1, -2.3) [plain] {(b)};
        \end{tikzpicture}
       \begin{tikzpicture}[scale=1.2]
            \node (b) at (0,0) [vertex] {};
            \node (c) at (1,0) [vertex] {};
            \node (d) at (1.5, -1) [vertex] {};
            \node (d1) at (-0.5, -1) [vertex] {};
            \node (i) at (0.5, 1) [vertex] {};
            \node (x) at (0.5, 0.38) [plain] {$3$};
            \node (y) at (0.5, -1.5) [plain] {$d$};
            \node (k) at (2, 0.5) [vertex] {};
            \node (k1) at (-1, 0.5) [vertex] {};
            
            \node (a1) at (-0.5, 0.8) [plain] {$\ell'$};
            \node (a2) at (1.5, 0.8) [plain] {$r'$};
            \node (b1) at (-0.8, -0.3) [plain] {$\ell$};
            \node (b2) at (1.8, -0.3) [plain] {$r$};

            \draw [fleche](y) to  node[pos=0.4,fill=white] {$\omega_{d,\ell, \ell',3, r', r}$} (x); 
            \draw [-, line width=0.03cm] (b) to (c);
            \draw [-, line width=0.03cm] (c) to (d);
            \draw [-, line width=0.03cm] (c) to (i);
            \draw [-, line width=0.03cm] (b) to (i);
            \draw [-, line width=0.03cm] (c) to (k);
            \draw [-, line width=0.03cm] (b) to (k1);
            \draw [-, line width=0.03cm] (b) to (d1);

            \node (cap) at (0.5, -2.3) [plain] {(c)};
        \end{tikzpicture}

        \end{center}
        \caption{The three types of discharging rules.}
        \label{fig:rules}
        \end{figure}

We also define the opposite rules that tell how much charge a $3$-face or a $3$-vertex receives according to the previous rules.
Thus, we define the charges $\omega_{3v, r, d, \ell} = - \omega_{d,\ell,3v,r}$ that a $3$-vertex receives from an incident $d$-face, the charges $\omega_{3, r, d, \ell} = - \omega_{d,\ell,3,r}$ that a $3$-face receives from a vertex-adjacent $d$-face and the charges $\omega_{3, r',r,d,\ell,\ell'} = - \omega_{d, \ell,\ell',3,r',r}$ that a $3$-face receives from an edge-adjacent $d$-face.
Note that, by the definition, the functions $(\omega_i)_{i \in I}$ are symmetric in a sense that: $\omega_{d, \ell, d',r} = \omega_{d, r, d',\ell}$ and $\omega_{d, \ell, \ell', d', r', r} = \omega_{d, r, r', d',\ell',\ell}$.

To each of these rules $(\omega_i)_{i \in I}$, we associate a variable of our Linear Program, so that a solution to the Linear Program will assign to each rule the amount of charge it transfers.

\paragraph{Constraints of the Linear Program}
Now we have the variables of the Linear Program, let us explain what are the constraints. We basically want that, at the end of the procedure, all the vertices and faces have a non negative charge.

In order to understand how the charge is modified around a vertex, we need to know the configuration of the faces and vertices around it. 
A \emph{configuration around} a $d$-face $f = [f_1,\dots, f_d]$ (see Figure~\ref{fig:exconfig}) is a cyclic ordering $[d_i]_{i\le 2d} \in (\{ 3v\}\cup\mathbb{N}_{\geqslant 3})^{2d}$ such that, for every $1 \le i \le d$:
\begin{itemize}
    \item if $v_i$ is a $3$-vertex, $d_{2i + 1} = 3v$. 
    \item if $v_i$ is a $4$-vertex, the face vertex-adjacent to $f$ with the common vertex $v_i$ is a $d_{2i + 1}$-face.
    \item the face edge-adjacent to $f$ with the common edge $v_iv_{i+1}$ is a $d_{2i}$-face.
\end{itemize}

\begin{figure}[hbtp]
        \begin{center}
        \tikzstyle{vertex}=[circle, draw, minimum size=7pt, scale=1, inner sep=1pt]
        \tikzstyle{vertexa}=[circle, draw, minimum size=7pt, scale=1, inner sep=1pt, thick]
        \tikzstyle{vertexb}=[circle, draw = none, minimum size=7pt, scale=0.9, inner sep=1pt, thick]
        \tikzstyle{plain}=[circle,draw=none, minimum size=15pt, scale=1.2, inner sep=1pt]
        \tikzstyle{fleche}=[->,>=latex]
        
      \begin{tikzpicture}[scale=1.1]
            \node (a) at (-0.30901699437, -0.95105651629) [vertexa] {$v_1$};
            \node (b) at (0,0) [vertexa] {$v_2$};
            \node (c) at (1,0) [vertexa] {$v_3$};
            \node (d) at (1.30901699437, -.95105651629) [vertexa] {$v_4$};
            \node (e) at (0.5, -1.39680224667) [vertexa] {$v_5$};
            \node (f) at (-1.17504239816,  -0.45105651629) [vertex] {};
            \node (g) at (-0.86602540378, 0.5) [vertex] {};
            \node (h) at (-0.36602540378, 1.366025404) [vertex] {};
            \node (i) at (0.5, 0.866025404) [vertex] {};
            \node (j) at (1.36602540378, 1.366025404) [vertex] {};
             \node (k) at (1.86602540378, 0.5) [vertex] {};
            \node (l) at (2.17504239816,  -0.45105651629) [vertex] {};
            
            \node (q) at (-1.1, -1.5) [vertex] {};
            \node (m) at (1.1, -2) [vertex] {};
            \node (n) at (2.1, -1.5) [vertex] {};
            \node (r) at (0.5, -2.4) [vertex] {};
            \node (s) at (-0.3, -2.5) [vertex] {};
            \node (t) at (-0.8, -2) [vertex] {};
           
            \draw [line width=0.03cm] (a) to (b);
            \draw [line width=0.03cm] (a) to (e);
            \draw [line width=0.03cm] (b) to (c);
            \draw [line width=0.03cm] (c) to (d);
            \draw [line width=0.03cm] (d) to (e);
            \draw  (c) to (i);
            \draw  (b) to (i);
            \draw  (f) to (g);
            \draw  (g) to (h);
            \draw  (j) to (k);
            \draw  (k) to (l);
            \draw  (c) to (k);
            \draw  (h) to (i);
            \draw  (f) to (a);
            \draw  (d) to (l);
            \draw  (i) to (j);
            \draw  (a) to (q);
            \draw  (f) to (q);
            \draw  (e) to (m);
            \draw  (m) to (n);
            \draw  (n) to (l);
            \draw  (r) to (s);
            \draw  (s) to (t);
            \draw  (m) to (r);
            \draw  (q) to (t);
            
            \draw[fleche, very thick] (2.5,-0.5) to (3.5, -0.5);
            \node (y) at (6.5, -0.5) [plain] {$[3,6^+,3v,3,4,4,3v,5,3v, 6^+]$};
            
        \end{tikzpicture}
        \end{center}
        \caption{Example of a configuration around a $5$-face}
        \label{fig:exconfig}
    \end{figure}
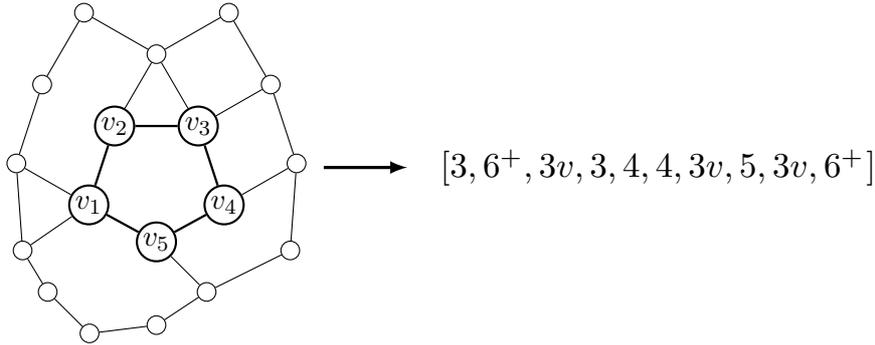

A \emph{configuration around} a $3$-vertex $v$ is the triple of integers corresponding to the degrees of the faces incident to $v$.

Let us now explain what will be the constraints of our Linear Program.
The goal of the proof is to transfer charges in order to get a non-negative final charge for every face and every vertex outside of configurations in $\mathcal{C}$.
Thus, the charge transferred $(\omega_i)_{i \in I}$ must satisfy the following constraints.

Let $f$ be a face of degree $d$. 
For each configuration $[d_i]_{i\le2d}$ around $f$, we can construct the set of variables $I([d_i]_{i \le 2d}) \subseteq I$  that will be applied on $f$.
In order to have a non-negative final charge for $f$ given an initial charge $d - \alpha$, the charges transferred $(w_i)_{i \in I}$ must verify that, for every possible configuration, we have:
$$ (d - \alpha) - \sum_{i \in I([d_i]_{i \le 2d})}\omega_i \ge 0 \quad (\star)$$ 
Likewise, the constraint on a configuration around a $3$-vertex is of the form:
$$ (3 - \alpha) - \sum_{i \in I([d_i]_{i\le 3})}\omega_i \ge 0 \quad (\star\star)$$ 
Note that the constraints $(\star)$ and $(\star\star)$ only depend to the configurations represented by a cycle of the form $[d_i]$ and thus the constraints are well-defined. Observe also that there is no constraint associated with vertices of degree at most 2, or 4. The former are reducible by Lemma~\ref{lem:deg2}. The latter are actually not affected by our rules: no charge is taken nor given to them (so that when $\alpha=4$, their initial and final charge is 0).

\subsection{Trimming the Linear Program}

The current encoding ensures that Theorem~\ref{thm:LP} holds. However, in its current state, our linear program contains infinitely many variables and configurations, which is obviously problematic. We will therefore trim it to obtain a finite program that will be enough to still get a weaker version of Theorem~\ref{thm:LP} (by adding some hypothesis on the set $\mathcal{C}$ of configurations). At the end of this operation, we get a finite linear program whose solutions are solutions of the initial one. However, the converse is not necessarily true: some proof power is lost, and the harsher the trimming, the weaker our system will be. 
So, for the trimming, we have to find the balance between relevant information to get a proof and number of variables and constraints (and then running time). In what follows, we explain how we can trim the LP to get a proof of Theorem~\ref{thm:dist2}, it might be slightly different if we want to prove other statements.

The trimming is twofold. First, we reduce the number of variables. We do so by not considering the exact length of faces as long as they are larger than some threshold. Increasing the threshold increases the proof power of the technique, to the price of the running time. In our case, we fix the threshold to six, meaning that we do not distinguish hexagons from heptagons, octagons, etc. More precisely, when considering the variables $\omega_{d,\ell,3v,r}, \omega_{d,\ell,3,r},\omega_{d,\ell,\ell',3,r',r}$, we now assume that $d \in \{5, 6^+\}$ and $(\ell,r,\ell',r') \in \{3,4,5,6^+\}^4$, so that our linear program now has finitely many variables.

We then reduce the number of constraints. Since the rules we consider are local (in the sense they only consider incident faces and vertices of a face or a $3$-vertex), note that there are at most $4^3$ configurations to consider for a $3$-vertex and, for each $d\geqslant 3$, at most $5^{2d}$ of them for a $d$-face. Therefore, the only reason for which we get infinitely many constraints is that $d$ can take any value greater than $2$. 

To solve this, we end this section by proving that only six properties imply that the final charge of $6^+$-faces is non-negative, provided we reduce some finite set $\mathcal{C}_{6^+}$ of configurations. This allows us to replace all the constraints for faces of size at least $6$ by setting the value of some variables of our Linear Program and reducing some set $\mathcal{C}_{6^+}$ of configurations. 

We define the six discharging rules \textbf{T} on $6^+$-faces as follows:
\begin{itemize}
    \item[\textbf{T1}] A $6^+$-face gives charge $\frac{2}{3}$ to each incident $3$-vertex.
    \item[\textbf{T2}] A $6^+$-face $f$ gives charge  $\frac{2}{3}$ to each edge-adjacent $3$-face $x$ if there is a $3$-face vertex-adjacent to $f$ and edge-adjacent to $x$.
    \item[\textbf{T3}] A $6^+$-face $f$ gives charge $\frac{2}{3}$ to each edge-adjacent $3$-face $x$ if $f$ and $x$ have a common vertex incident to two $4$-faces.
    \item[\textbf{T4}] A $6^+$-face $f$ gives charge $\frac{1}{3}$ to each edge-adjacent $3$-face $x$ if we are not in case \textbf{T2} or \textbf{T3}.
    \item[\textbf{T5}] A $6^+$-face $f$ gives charge $\frac{1}{3}$ to each vertex-adjacent $3$-face $x$ if there is a $4$-face edge-adjacent to $f$ and $x$.
    \item[\textbf{T6}] A $6^+$-face $f$ gives $0$ to each vertex-adjacent $3$-face $x$ such that \textbf{T5} does not apply. 
\end{itemize}

In other words, we fix some variables in our Linear Program, namely if $(\ell,r,\ell',r') \in \{3,4,5,6^+\}^4$, we set:
\begin{itemize}
    \item[\textbf{T1}] $\omega_{6^+,\ell, 3v,r}  = \frac{2}{3}$.
    \item[\textbf{T2}] $\omega_{6^+,\ell', \ell, 3,3, r'} = \omega_{6^+, \ell',3, 3, 3,r'} = \omega_{6^+, \ell',3, 3, r,r'} = \frac{2}{3}$.
    \item[\textbf{T3}]$\omega_{6^+,\ell',\ell, 3, 4,4} = \omega_{6^+, 4, 4, 3, r,r'} =  \frac{2}{3}$.
    \item[\textbf{T4}] If $\ell \neq 3$, $r \neq 3$, $(\ell',\ell) \neq (4,4)$ and $(r,r') \neq (4,4)$, then $\omega_{6^+,\ell', \ell, 3, r, r'} = \frac13$.
    \item[\textbf{T5}]$\omega_{6^+,\ell, 3,4} = \omega_{6^+, 4,3,r} =\frac13$.
    \item[\textbf{T6}]If $\ell\neq 4$ and $r\neq 4$, then $\omega_{6^+,\ell, 3,r } =0$.
\end{itemize}

We now prove that the main result of this section, namely that our trimming is correct.

\begin{lemma}
\label{lem:6+}
Assume the charges transferred $(\omega_i)_{i \in I}$ satisfy \textbf{T}. There is a finite set of configurations $\mathcal{C}_{6^+}$ such that for every graph not containing them, the final charge of every $6^+$-face is non-negative.
\end{lemma}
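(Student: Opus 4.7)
The plan is to bound, for any $6^+$-face $f$ of degree $d \geq 6$, the total charge $f$ transfers under \textbf{T1}--\textbf{T6} by its initial charge $d-4$, up to a finite list of forbidden local configurations $\mathcal{C}_{6^+}$. The approach is a per-edge bookkeeping along $\partial f$. Write $f=[v_0,\dots,v_{d-1}]$, let $e_i=v_{i-1}v_i$, let $A_i$ be the face across $e_i$, and let $B_i$ (when $v_i$ is a $4$-vertex) be the face vertex-adjacent to $f$ at $v_i$. Define the local contribution of $e_i$ as
$$\phi(e_i) \;=\; c_{e_i} + \tfrac{1}{2} c_{v_{i-1}} + \tfrac{1}{2} c_{v_i},$$
where $c_{e_i}$ is the charge $f$ sends to $A_i$ via \textbf{T2}--\textbf{T4} (zero unless $A_i$ is a $3$-face), and $c_{v_j}$ is either $2/3$ (to $v_j$ via \textbf{T1}, if $v_j$ is a $3$-vertex) or the charge in $\{0,1/3\}$ sent to $B_j$ via \textbf{T5}/\textbf{T6} (if $v_j$ is a $4$-vertex). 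Since each vertex is shared by exactly two edges of $f$, the total charge out of $f$ equals $\sum_{i=0}^{d-1} \phi(e_i)$.

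The main step is an enumeration of local patterns around $e_i$, parametrized by the degrees of $A_i, A_{i-1}, A_{i+1}, v_{i-1}, v_i, B_{i-1}, B_i$, all drawn from the trimmed alphabets $\{3,4,5,6^+\}$ and $\{3,4\}$, which gives finitely many cases. Several observations prune the case tree. If $v_{i-1}$ is a $3$-vertex, its three incident faces are exactly $f, A_{i-1}, A_i$, so neither \textbf{T2} nor \textbf{T3} can trigger through $v_{i-1}$; this excludes the naive worst case $\phi(e_i)=4/3$ coming from two $3$-vertex endpoints and an edge-adjacent $3$-face. The remaining patterns yielding $\phi(e_i) > 1$ (typically a $3$-vertex adjacent on $f$ to a $4$-vertex where \textbf{T2} triggers through the $4$-vertex and \textbf{T5} is simultaneously active) are declared bad and added to $\mathcal{C}_{6^+}$. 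After this exclusion, $\phi(e_i) \leq 1$ for every edge of $f$.

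To strengthen $\sum_i \phi(e_i) \leq d$ into the desired $\sum_i \phi(e_i) \leq d - 4$, I would leverage slack from edges with strictly smaller contributions: edges where $A_i$ is not a $3$-face, or where both endpoints are $4$-vertices without vertex-adjacent $3$-faces, contribute at most $2/3$, giving slack $\geq 1/3$ each. Planarity together with $\Delta \leq 4$ forces such slack edges to exist on any $f$ avoiding $\mathcal{C}_{6^+}$. For instance, a $d$-face whose boundary is entirely made of $3$-vertices would force the third neighbours of all $v_i$'s to collapse into a single outer vertex of degree $\geq d$, impossible for $d \geq 5$. For $d$ large (say $d \geq 12$) even a modest proportion of slack edges covers the deficit; for the small values $d \in \{6, \dots, 11\}$, a finite enumeration of local configurations around $f$ yields the final entries of $\mathcal{C}_{6^+}$, and since only bounded $d$ and finitely many local patterns are involved, $\mathcal{C}_{6^+}$ is finite.

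The main obstacle is the case analysis itself: a single $3$-face near $e_i$ may simultaneously activate \textbf{T2}, \textbf{T3}, and \textbf{T5} in subtly interleaved ways, so one must carefully separate scenarios and check each combination against the $\phi(e_i) \leq 1$ target. This is exactly the combinatorial case explosion that the paper's automatic discharging machinery is designed to handle, and indeed $\mathcal{C}_{6^+}$ is likely cleaner to produce mechanically than to enumerate fully by hand.
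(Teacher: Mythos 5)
Your per-edge bookkeeping is the right general idea (and is also how the paper argues: it reroutes all charge through the boundary edges of $f$), but the quantitative core of your argument has a genuine gap. With the threshold $\phi(e_i)\le 1$ you only get $\sum_i\phi(e_i)\le d$, which misses the target $d-4$ by $4$; covering that deficit with slack of $1/3$ per ``slack edge'' requires at least $12$ such edges, so for $6\le d\le 11$ the scheme cannot work even if \emph{every} edge is slack ($2d/3>d-4$ there). You defer exactly these cases to an unspecified ``finite enumeration around $f$'', but that deferred step is the entire content of the lemma: for a $6$-face the average per-edge contribution must be forced down to $(6-4)/6=1/3$, which is precisely the threshold the paper uses from the start (it puts into $\mathcal{C}_{6^+}$ every local structure through which \emph{more} than $1/3$ transits, and then $d-4-d/3\ge 0$ for all $d\ge 6$ at once, with finiteness because the transit through an edge $uv$ depends only on the $4^-$-faces at $u$ and $v$). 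Your preliminary bound $\phi(e_i)\le 1$ buys essentially nothing toward this. Moreover, for $d\ge 12$ a ``modest proportion'' of slack edges is not enough either: you still need at least $12$ of them, i.e.\ most or all edges when $d$ is close to $12$, and nothing in your forbidden set (structures with $\phi>1$) guarantees their existence.

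The structural claim you invoke to produce slack is also false. A face whose boundary consists entirely of $3$-vertices does not force the third neighbours to collapse into one vertex: take the prism over a $d$-gon (two concentric $d$-cycles joined by a perfect matching), a planar graph of maximum degree $3$ in which the inner $d$-face has all boundary vertices of degree $3$. There every edge receives $\phi=2/3$ under your accounting (your ``slack'' level), yet the face sends out $2d/3>d-4$ for all $6\le d\le 11$, so it ends with negative charge while containing none of your forbidden structures. In the paper's accounting this situation is caught immediately: two adjacent $3$-vertices each route $1/3$ onto their common boundary edge, exceeding the $1/3$ threshold, so the structure lands in $\mathcal{C}_{6^+}$ (and, as required later in Section~\ref{sec:C6+}, it contains the reducible configuration of two adjacent $3$-vertices from $\mathcal{C}$). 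This last point also matters for your fallback of adding whole-face configurations for $6\le d\le 11$ to $\mathcal{C}_{6^+}$: the set would still be finite, but the paper needs every member of $\mathcal{C}_{6^+}$ to contain a configuration of $\mathcal{C}$, and edge-local structures with the $1/3$ threshold are what make that verification feasible.
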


Assuming that Lemma~\ref{lem:6+} holds, we directly obtain Theorem~\ref{thm:LP} when we consider a set of configurations containing $\mathcal{C}_{6^+}$. Indeed, our trimmed Linear Program satisfies \textbf{T}, which ensures that the rules we obtain yield non-negative weights on $6^+$-faces. Moreover, the constraints corresponding to $5^-$-faces and $3$-vertices stayed unchanged, hence these faces and vertices also end with non-negative weight. We end this section with the proof of Lemma~\ref{lem:6+}.

\begin{proof}
Let $f$ be a face of degree $d \ge 6$, with an initial charge of $d-4$. We will analyse how much charge is lost by $f$ when applying rules \textbf{T1}-\textbf{T6}. To this end, we will make the charge go through some intermediate stop before reaching its final destination. More precisely, we will restate the rules so that the charge goes through some edges of the boundary of $f$. Thus, $f$ gives the charges to edges and then the edges will give the charges to the $3$-faces and $3$-vertices. This equivalent reformulation, quite standard in discharging proofs, permits to simplify the analysis of the charge leaving $f$.

Consider the different cases where the rules with positive charge are applied (see Figure~\ref{fig:larger}):
\begin{itemize}
    \item if $f$ gives $\frac23$ to an incident $3$-vertex $v$ by \textbf{T1}, we say that $f$ gives $\frac{1}{3}$ to the two edges incident with $v$ which are on the boundary of $f$. 
    \item if $f$ gives charge to an edge-incident $3$-face $x$ by \textbf{T2}-\textbf{T4}, we first give $\frac13$ to their common edge $e$. If moreover \textbf{T2} or \textbf{T3} applies, we also split the remaining $\frac13$ between the edges of $f$ incident to $e$ if their common endpoint satisfies the condition of the rule.
    \item if $f$ gives charge to a vertex-incident $3$-face by \textbf{T5}, let $v$ be their common vertex and let $y$ be the 4-face edge-incident to $x$ and $f$. If $f$ is vertex-incident to another 3-face sharing an edge with $y$, then $f$ gives $\frac16$ to each edge incident to $f$ and $v$. Otherwise, it gives $\frac 13$ to its common edge with $y$.
\end{itemize}
\input{remaining_configurations/largerfaces}

Let $\mathcal{C}_{6^+}$ be the set of all structures leading to more than $\frac13$ transiting through an edge. Note that the amount of charge transiting through an edge $uv$ depends only on the configuration of $4^-$-faces incident to $u$ and $v$. Therefore, $\mathcal{C}_{6^+}$ contains finitely many configurations.

Now, if no such configuration arises in $G$, then at most $\frac13$ goes through each of the $d$ edges. Therefore, the final charge of the face $f$ is $d-4-\frac{d}{3}$, which is non-negative when $d\geqslant 6$. This concludes the proof. 
\end{proof}

We postpone the exact description of configurations in $\mathcal{C}_{6^+}$ to Section~\ref{sec:C6+}, where we show their reducibility (either manually or using our heuristic). 

\subsection{The final algorithm}
\label{sec:finalalgo}
We now have all the ingredients to present our program.
Let $D$ be the set of all configurations around faces and $3$-vertices represented by the cycles $[d_i]$ and let $\mathcal{C}$ be a set of forbidden configurations.
For every configuration $[d_i]$ of $D$, we have a constraint of the form ($\star$) or ($\star\star$). 
We solve the following linear program:
$$ \max_{\alpha, \omega}{\alpha} \quad\text{subject to}$$ 
$$ \ (d - \alpha) - \sum_{i \in I([d_i]_{i \le 2d})}\omega_i \ge 0 \ \ \text{for} \ d \ge 3 \ \text{and} \ [d_i]_{i \le 2d} \in D\setminus \mathcal{C}$$
$$   (3 - \alpha) - \sum_{i \in I([d_i]_{i \le 3})}\omega_i \ge 0 \\ \ \text{for} \ [d_i]_{i \le 3} \in D\setminus \mathcal{C}$$
The linear program returns the maximum $\alpha$ and a set of charges $(\omega_i)_{i \in I}$ such that the final charge of every face and vertex not in a configuration from $\mathcal{C}$ is non-negative after applying the rules with the charges $(\omega_i)_{i \in I}$.

If the maximum $\alpha$ is greater or equal to $4$, then the constraints are in particular verified for $\alpha = 4$. This means that each vertex and face not appearing in a configuration from $\mathcal{C}$ ends with non-negative charge, even if the initial charges are $\deg(x) - 4$ and the total charge is negative by Equation~(\ref{eq:Euler}). We thus obtain that $\mathcal{C}$ is unavoidable, which ends the proof, provided that each configuration of $\mathcal{C}$ is reducible. 

At the beginning, there is no chance that the linear program will output some $\alpha \ge 4$ since we have not set any constraint on our planar graph. So the maximum $\alpha$ given by our solver will be less than $4$. So our Linear Program will output the maximum value of $\alpha$ we can obtain and for which extreme point of the polyhedron of the feasible region we can obtain it (an extreme point of a Linear Program being defined as the intersection of the constraints tight on this point). In particular, it ensures that if we want to improve the solution one of these constraints has to be removed.

Our algorithm returns this set of tight constraints, each of them corresponding to some configurations in $D$. Now, our goal is to prove that some of these configurations are reducible. If this is possible, we can remove the corresponding constraints from the linear program (the configurations are added in the set of reducible configurations $\mathcal{C}$), which (hopefully) allows the maximum value of $\alpha$ to increase.

We now repeat the loop again and again until either we end up with $\alpha \ge 4$ (and hence the theorem is proven), or we fail to prove that some configuration is reducible (which means the system has not enough expression power to prove the theorem, and a less harsh trimming has to be considered). To sum up, we represent on Figure~\ref{fig:prog} the behavior of our program. 

 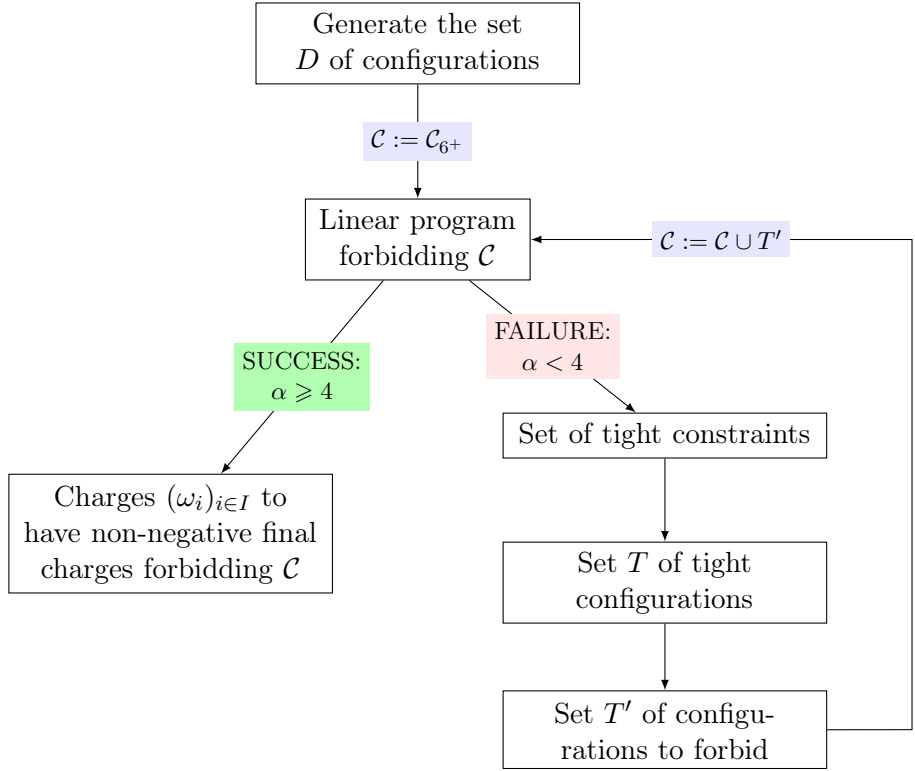
\begin{figure}[hbtp]
        \begin{center}
        \tikzstyle{vertexa}=[circle,draw, minimum size=15pt, scale=1, inner sep=1pt, fill=black!5, very thick]
        \tikzstyle{vertexbc}=[circle,draw, minimum size=15pt, scale=1, inner sep=1pt, fill=black!5]
        \tikzstyle{3vertex}=[draw, minimum size=15pt, scale=1, inner sep=1pt, fill=black!5]
        \tikzstyle{plain}=[circle,draw=none, minimum size=15pt, scale=1.2, inner sep=1pt]
        \tikzstyle{vertex}=[circle,draw, minimum size=7pt, scale=1, inner sep=1pt]
        \tikzstyle{fleche}=[->,>=latex]
        
        \begin{tikzpicture}[scale=1.3]
              \node (a) [draw,text width=2.7cm, text justified, align=center] at(0,0){ 
                Linear program forbidding $\mathcal{C}$   
             };
              \node (b) [draw,text width = 4cm, text justified, align = center] at(-2.5,-3){ 
                Charges $(\omega_{i})_{i \in I}$ to have non-negative final charges forbidding $\mathcal{C}$};
             
             \draw [fleche] (a) to node[pos=0.5,fill=green!30, align =center, scale = 0.85] 
             {
             SUCCESS: \\ $\alpha \ge 4$
             } (b);
             
               \node (x) [draw,text width = 4cm, text justified, align = center] at(0,2){ 
                Generate the set $D$ of configurations 
             };
            
              \node (c) [draw,text width = 4cm, text justified, align = center] at(2.5,-2){ 
                Set of tight constraints 
             };
             
             \draw [fleche] (a) to node[pos=0.5,fill=red!10, scale = 0.85, align = center] {FAILURE: \\ $\alpha < 4$} (c);

            \node (d) [draw,text width = 4cm, text justified, align = center] at(2.5,-3.5){ 
                Set $T$ of tight configurations 
             };

            \node (e) [draw,text width = 4cm, text justified, align = center] at(2.5,-5){ 
                Set $T'$ of configurations to forbid
             };
             \draw [fleche] (c) to (d);
             \draw [fleche] (d) to (e);
             \draw [fleche] (x) to node[pos=0.5,fill=blue!10, scale = 0.85, align = center] {$\mathcal{C} := \mathcal{C}_{6^+}$} (a);
             \draw [fleche] (e) to (5, -5) to (5,0) to node[pos=0.5,fill=blue!10, scale = 0.85] {$\mathcal{C} := \mathcal{C} \cup T'$} (a);
            
        \end{tikzpicture}

        \end{center}
        \caption{Construction of a proof using the discharging method.} 
        \label{fig:prog}
        \end{figure}

\paragraph{Implementation.}

Our system is implemented in Python and runs on a standard laptop. The code can be found at \url{https://gitlab.com/lucas.de_meyer/square-coloring-stage}.

As a pre-processing step, we generate the set $D$ of all configurations (filtering out symmetries) as well as the initial linear program. As shown before, we just need the configurations around the $3$-vertices, the $3$-faces and the $5$-faces, so configurations of length respectively $3$, $6$ and $10$.
Generating all these configurations takes nearly $20$ minutes and it can be done just once, since we then only remove some constraints.

Afterwards, at each iteration, the program removes the reducible configurations from $D$, generates the corresponding linear program and solves it, in less than $2$ minutes in total.

In our approach, we update manually the set $\mathcal{C}$ of reducible configurations.
Indeed, the linear program returns a set of tight configurations and, among them, we choose at least one of their subgraphs which we can reduce. Note that this choice is not random, we instead aim at removing as many tight constraints as possible from $D$.

After a dozen iterations (for less than an hour of computation in total), we are able to get a set $\mathcal{C}$ of 40 configurations such that the associated Linear Program has a solution where $\alpha=4$. A more precise description of this set is provided in the next section. By Lemma~\ref{lem:6+}, we obtain the following.

\begin{theorem}
\label{thm:unavoid}
The minimum counter-example $G$ contains a configuration from $\mathcal{C}\cup\mathcal{C}_{6^+}$.
\end{theorem}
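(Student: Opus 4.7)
The plan is to derive a contradiction by assuming that $G$ avoids every configuration in $\mathcal{C}\cup\mathcal{C}_{6^+}$, and to combine the output of the Linear Program with Lemma~\ref{lem:6+} to show that all charges end non-negatively, in contradiction with Euler's formula.

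First, I would assign to each $x \in V(G) \cup F(G)$ the initial charge $\deg_G(x) - 4$. By Equation~\eqref{eq:Euler}, the total charge equals $-8$. I would then apply the discharging rules with weights $(\omega_i^{*})_{i\in I}$ obtained by the final run of the Linear Program: those corresponding to $6^+$-faces take the values prescribed by $\mathbf{T1}$--$\mathbf{T6}$, while the remaining ones are set by the LP solver so as to satisfy $\alpha \ge 4$ on the trimmed program where the constraints corresponding to configurations in $\mathcal{C}$ have been removed. Since the total charge is preserved by the discharging process, it suffices to show that the final charge of every face and every vertex is non-negative.

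Next, I would check non-negativity face by face and vertex by vertex. Vertices of degree at most $2$ do not occur by Lemma~\ref{lem:deg2}. A $4$-vertex neither gives nor receives charge under the rules we allow, so its final charge equals its initial charge $0$. For a $3$-vertex, or for a face of degree $3$, $4$, or $5$, the configuration around it belongs to $D\setminus\mathcal{C}$ (by the avoidance hypothesis), so the associated constraint $(\star)$ or $(\star\star)$ is present in the trimmed LP; since $(\omega_i^*)$ is feasible with $\alpha = 4$, the final charge is non-negative. For $6^+$-faces, the weights set by $\mathbf{T}$ satisfy the assumption of Lemma~\ref{lem:6+}, and since $G$ contains no configuration from $\mathcal{C}_{6^+}$, that lemma directly gives non-negative final charge. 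Summing over all elements of $V(G)\cup F(G)$ then yields a non-negative total, contradicting the initial total of $-8$.

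The only non-routine aspect of writing this down is bookkeeping: one must check that every constraint dropped during the trimming phase either corresponds to a $6^+$-face (and is then handled by Lemma~\ref{lem:6+}) or to a configuration that was explicitly added to $\mathcal{C}$, so that nothing slips through the cracks. Once the correspondence between the LP constraints of type $(\star)$/$(\star\star)$ and the local configurations of $G$ is spelled out, the proof reduces to invoking the feasibility of $(\omega_i^*)$ with $\alpha=4$, invoking Lemma~\ref{lem:6+}, and comparing with Euler's formula. The actual verification that the 40 configurations chosen by the human--machine loop do produce $\alpha \ge 4$ is a finite LP computation, carried out by the program, and the description of $\mathcal{C}$ itself is deferred (together with the reducibility arguments) to the later sections.
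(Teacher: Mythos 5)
Your proposal is correct and follows essentially the same route as the paper: it combines the feasibility of the trimmed Linear Program at $\alpha=4$ (for $3$-vertices and $5^-$-faces avoiding $\mathcal{C}$, with $4$-vertices and $4$-faces unaffected by the rules), Lemma~\ref{lem:6+} for $6^+$-faces avoiding $\mathcal{C}_{6^+}$, charge conservation, and Equation~\eqref{eq:Euler} to reach the contradiction. The only negligible difference is that the paper's implementation never generates constraints for $4$-faces (they neither give nor receive charge), whereas you invoke a constraint for them, but this does not affect the argument.
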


\section{Reducing configurations}
\label{sec:heur}
The goal of this section is to conclude the proof of Theorem~\ref{thm:dist2} by reducing the configurations from $\mathcal{C}\cup\mathcal{C}_{6^+}$. We first present another program in Section~\ref{subsec:heur} that allows us to reduce $28$ out of the $40$ configurations in $\mathcal{C}$. We then reduce manually the $12$ remaining configurations from $\mathcal{C}$ in Section~\ref{sec:remaining}, using some auxiliary results proved in Section~\ref{sec:sepcyc}. Finally, we conclude the proof  in Section~\ref{sec:C6+} by showing that if $G$ contains a configuration from $\mathcal{C}_{6^+}$, it actually contains another configuration from $\mathcal{C}$. 

\subsection{A heuristic}
\label{subsec:heur}
\paragraph{How to reduce configurations.}
We start with a description of a generic method used to reduce a configuration $C$. We follow the lines of the proof of Lemma~\ref{lem:deg2}. Recall that our goal is, assuming that our counterexample $G$ contains $C$, to provide a graph $G_C$ with less vertices (or edges) than $G$ and such that one can extend a distance-$2$ coloring $\alpha$ of $G_C$ with $12$ colors (which exists by minimality) to a coloring of $G$. 
For our purposes, $G_C$ will most of the time be defined by removing from $G$ some vertices involved in $C$.

For each vertex $v \in V(C)$, the coloring $\alpha$ forbids some colors for $v$ and we can consider $L(v)$ the set of \emph{free colors} of $v$, that is the set of colors $c$ such that no vertex at distance at most $2$ from $v$ in $V(G_C)$ is colored $c$.
Each set $L(v)$ has a minimum possible size $\ell(v)$ which is the total number of colors minus the number of vertices of $V(G_C)$ at distance at most $2$ from $v$.
Let $H$ be the graph whose vertices are the uncolored vertices of $G$, with an edge between pairs of vertices within distance 2 in $G$. Let $\ell$ be the function that asigns to every vertex $v \in H$ the integer $\ell(v)$. Our goal is to show, that $H$ is \emph{$\ell$-choosable}. If we can prove it, it ensures that any possible coloring of $G_C$ (and in particular a distance-$2$ $12$-coloring) can be extended to $G$, which proves that $C$ is reducible.

Graph choosability is well-known to NP-hard. To improve performance, instead of solving this problem exactly, We implement an heuristic algorithm $\A$ that, given a graph $H$ and $\ell : V(H) \mapsto \mathbb{N}$, if $\A(H, f)$ returns \texttt{True}, then $H$ is $\ell$-choosable. 
And then, we apply $\A$ on each of the graphs $H$ obtained from the $41$ configurations given by our Linear Program algorithm.
If $\A$ returns \texttt{True}, we have proved that the configuration $C$ is reducible. Note however that a negative answer does not imply the non-choosability of the graph, nor the irreducibility of $C$.

In the following, we present some generic techniques that, once combined, allow to reduce $30$ of the $41$ configurations. It is worth mentioning that the method we provide here is generic for any type of coloring problem. For the ease of exposition, we thus state the method for standard (\emph{i.e.} distance-$1$) coloring (that is, we simply want that adjacent vertices receive distinct colors). This is not restrictive, indeed, recall that a distance-$2$ coloring of $H$ is a distance-$1$ coloring of $H^2$.

\paragraph{Happy vertices.}
Some vertices might be easy to color. We will get rid of them, to focus on the hardest part of the instance. Let $H$ be a graph and $\ell : V(H) \mapsto \mathbb{N}$.
A vertex $v$  is \textit{happy} in $H$ for $\ell$ if $\ell(v) > \deg(v)$. 
Let $v$ be a happy vertex in $H$ for $\ell$.
For any coloring of $H - v$, we can extend the coloring to $v$ for any list of length $\ell(v)$, since we forbid at most $\deg(v) < \ell(v)$ colors.
Thus, if $H - v$ is $\ell$-choosable, $H$ is $\ell$-choosable. Hence, instead of coloring $H$, it is sufficient to remove its happy vertices (without modifying the value of $\ell$ for the other vertices) and color the remaining graph. This yields the following.

\begin{lemma}\label{lem:happy}
Let $v$ be a happy vertex of a graph $H$ for a function $\ell : V(H) \mapsto \mathbb{N}$. If $H - v$ is $\ell$-choosable, then $H$ is $\ell$-choosable.
\end{lemma}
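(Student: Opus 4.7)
The plan is to unfold the definition of $\ell$-choosability and reduce extending a coloring to a simple counting argument. Fix an arbitrary list assignment $L : V(H) \to 2^{\mathbb{N}}$ with $|L(u)| = \ell(u)$ for every $u \in V(H)$; I must exhibit a proper $L$-coloring of $H$.

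First, I restrict $L$ to $V(H) \setminus \{v\}$, which yields a list assignment on $H - v$ with the same sizes, namely $\ell(u)$ for each $u \neq v$. By the hypothesis that $H - v$ is $\ell$-choosable (applied to this restricted assignment), there is a proper coloring $\alpha$ of $H - v$ with $\alpha(u) \in L(u)$ for every $u \in V(H) \setminus \{v\}$.

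It then remains to extend $\alpha$ to $v$. The only constraints on the color we assign to $v$ are that it must belong to $L(v)$ and differ from $\alpha(u)$ for each neighbor $u$ of $v$ in $H$. The neighbors forbid at most $\deg_H(v)$ colors in total, so the number of colors in $L(v)$ that remain available is at least
\[
\ell(v) - \deg_H(v) \;>\; 0,
\]
using the definition of a happy vertex. Picking any such color completes a proper $L$-coloring of $H$.

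The argument is essentially a one-liner, so there is no real obstacle; the only thing to be careful about is that the proof must work for \emph{every} list assignment of the required sizes (not just one), which is handled by the fact that our appeal to the $\ell$-choosability of $H - v$ is done after $L$ has been fixed arbitrarily.
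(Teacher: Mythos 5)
Your proof is correct and is essentially the argument the paper gives: restrict the list assignment to $H-v$, color it using the $\ell$-choosability hypothesis, and extend greedily to $v$, where at most $\deg_H(v) < \ell(v)$ colors are forbidden. Nothing is missing.
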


\paragraph{Coloring a vertex.}
Let $H$ be a graph, $L$ be a list assignment and $v\in V(H)$. Suppose that we want to color $v$ with a color $c\in L(v)$. For each neighbor $u$ of $v$ such that $c \in L(u)$, the number of free colors of $u$ decreases. 

Let $\ell_{v,c} : V(H) \mapsto \mathbb{N}$ be the resulting numbers of colors, \emph{i.e.}  $\ell_{v,c}(u) = |L(u) \setminus c|$ if $u$ is a neighbor of $v$, and $\ell_{v,c}(u) = |L(u)|$ otherwise. Since reducing the sizes of the lists only makes our task harder, we may actually assume that the list of every neighbor $u$ of $v$ has lost a color, so that $\ell_{v,c}(u) = \ell(u) - 1$. Therefore, if $H-v$ is $\ell_{v,c}$-choosable, then we can color $H$ by simply color $v$ with $c$, remove $c$ from the list of its neighbors and color $H-v$. 

\begin{lemma}
Let $v$ be a vertex of $H$ and $f : V(H) \mapsto \mathbb{N}$.
Let $\ell_v : V(H - v) \mapsto \mathbb{N}$ defined by $\ell_v(u) = \ell(u) - \mathbf{1}_{N(v)}$ for every vertex $u \neq v$.
If $H - v$ is $\ell_v$-choosable, then $H$ is $\ell$-choosable.
\end{lemma}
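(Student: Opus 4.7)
The plan is to formalize the informal color-and-restrict argument that immediately precedes the statement. Given any list assignment $L$ for $H$ with $|L(u)| = \ell(u)$ for every $u \in V(H)$, I would pick an arbitrary color $c \in L(v)$. This is where the implicit assumption $\ell(v) \geq 1$ is used; if $\ell(v) = 0$, then $H$ cannot be $\ell$-choosable in any case, so the implication is vacuous (or the statement should be read with this positivity assumption). The strategy is then to delete $v$, remove $c$ from the lists of its neighbors, and appeal to the assumed choosability of $H - v$.

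Concretely, I would define $L'$ on $V(H-v)$ by $L'(u) = L(u) \setminus \{c\}$ for $u \in N(v)$ and $L'(u) = L(u)$ otherwise, so that $|L'(u)| \geq \ell(u) - \mathbf{1}_{N(v)}(u) = \ell_v(u)$ for every $u \in V(H-v)$. Since choosability is monotone under enlargement of lists (one can simply discard surplus colors in each list to reduce to the hypothesis), the $\ell_v$-choosability of $H-v$ yields a proper list-coloring $\varphi$ of $H-v$ from $L'$. I would then extend $\varphi$ by setting $\varphi(v) = c$; because $c$ was removed from $L'(u)$ for every $u \in N(v)$, no neighbor of $v$ is colored $c$, so $\varphi$ is a proper $L$-coloring of $H$. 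Since $L$ was arbitrary, $H$ is $\ell$-choosable.

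There is essentially no obstacle here: the proof is a direct translation of the preceding paragraph. The only mildly non-trivial point is the monotonicity of choosability used to pass from $|L'(u)| \geq \ell_v(u)$ to an application of the hypothesis stated for exact sizes $\ell_v$, but this is a standard and elementary observation that does not require any new ideas.
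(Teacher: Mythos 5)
Your proposal is correct and is exactly the argument the paper intends: the paper gives no separate proof of this lemma, since the preceding ``Coloring a vertex'' paragraph (pick $c\in L(v)$, delete $v$, remove $c$ from the neighbors' lists, and use monotonicity of choosability under enlarging lists) is precisely what you formalized. One small quibble: when $\ell(v)=0$ the implication is not vacuous but actually false (take $H-v$ $\ell_v$-choosable; $H$ is not $\ell$-choosable since $L(v)=\emptyset$), so the correct reading is your alternative one, namely the implicit assumption $\ell(v)\geq 1$, which is guaranteed in the paper's use of the lemma because Algorithm~$\mathcal{A}$ returns \texttt{False} whenever some $\ell(v)\le 0$ before this step is applied.
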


\paragraph{Matrix of inclusion.}
We define a \emph{matrix of inclusion} of $(H, \ell)$ as a square $\{0,1\}$-matrix of size $|V(H)|$ such that, for $u,v\in V(H)$: 
$$M(u,v) = 1 \implies (L(v) \nsubseteq L(u) \implies H \ \text{is} \  \ell \text{-choosable})$$

Note that the identity matrix is a matrix of inclusion since the implication is trivial ($L(u) \nsubseteq L(u)$ being false for every $u$). Moreover, if $\ell(u) < \ell(v)$, then $L(v) \nsubseteq L(u)$. Therefore, we get the following.

\begin{lemma}\label{prop:incl}
Let $M$ be a matrix of inclusion of $(H,\ell)$ and assume that there is a pair $u$ and $v$ such that $M(u,v) = 1$ and $\ell(u) < \ell(v)$. Then $H$ is $\ell$-choosable.
\end{lemma}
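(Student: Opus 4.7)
The plan is to unpack the definition of $\ell$-choosability and combine it with the defining implication of a matrix of inclusion. Recall that to show $H$ is $\ell$-choosable, I have to verify that for \emph{every} list assignment $L$ with $|L(w)| = \ell(w)$ for all $w \in V(H)$, the graph $H$ admits an $L$-coloring. So I would fix an arbitrary such $L$ and show that $H$ is $L$-choosable, which by definition of a matrix of inclusion reduces to checking that $L(v) \not\subseteq L(u)$ for the specified pair.

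The key observation is then purely cardinality-based: since $|L(u)| = \ell(u) < \ell(v) = |L(v)|$, any set containment $L(v) \subseteq L(u)$ would force $|L(v)| \le |L(u)|$, contradicting the strict inequality. Hence $L(v) \not\subseteq L(u)$, and invoking the implication guaranteed by $M(u,v) = 1$ gives that $H$ is $L$-choosable. Since $L$ was arbitrary, $H$ is $\ell$-choosable.

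There is no real obstacle here: the lemma is essentially a one-line consequence of the definition of a matrix of inclusion, designed to justify the heuristic usage of such matrices in the algorithm. The only subtlety worth flagging explicitly in the write-up is that $\ell$-choosability is a \emph{universal} statement over list assignments of the prescribed sizes, so the cardinality argument must be applied to a generic $L$ rather than to a single fixed one.
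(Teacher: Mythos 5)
Your proposal is correct and matches the paper's argument, which is exactly the cardinality observation that $\ell(u)<\ell(v)$ forces $|L(u)|<|L(v)|$ and hence $L(v)\nsubseteq L(u)$, after which the implication attached to $M(u,v)=1$ yields $\ell$-choosability. Your extra care about quantifying over all list assignments $L$ is a harmless (indeed slightly more explicit) phrasing of the same one-line proof the paper gives immediately before stating the lemma.
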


\paragraph{The algorithm.}
Let $H$ be a graph and  $\ell : V(H) \mapsto \mathbb{N}$ be a function representing the number of free colors for each vertex. By induction on $H$ and according to the previous paragraphs, if Algorithm~\ref{algo:reduc}, denoted by $\A$, returns \texttt{True}, then $H$ is $\ell$-choosable. 

\begin{algorithm}[!ht]
\KwIn{A graph $H$ and a function $\ell : V(H) \mapsto \mathbb{N}$.}
\KwOut{If it returns \texttt{True}, then $H$ is $\ell$-choosable.}
\BlankLine
\If{$H$ is empty}{\Return{\texttt{True}}}
\If{$H$ has a vertex $v$ such that $\ell(v) \le 0$}{\Return{\texttt{False}} (since $v$ cannot be colored)} 
\If{$H$ has a happy vertex $v$}{\Return{$\A(H - v, \ell)$}}
Compute a matrix of inclusion $M$ of $(H,\ell)$ by induction on $H$ (see corresponding paragraph).

\If{$H$ has a pair of vertices $u$ and $v$ such that $M(u,v) = 1$ and $\ell(u) < \ell(v)$}{\Return{\texttt{True}}}
\Return{\texttt{False}}

\caption{$\mathcal{A}(H, \ell)$}
\label{algo:reduc}
\end{algorithm}

\paragraph{Computing the matrix.} 
The algorithm $\A$ computes a matrix of inclusion $M$ of $(H, \ell)$ by induction on $H$.
As we already noted, we can initialize the matrix $M$ as the identity matrix of size $|V(H)|$.
To update it, the algorithm takes some vertices $u,v$ such that $M(u,v) = 0$ and tests whether it can set $M(u,v) = 1$, \emph{i.e.} $(L(v) \nsubseteq L(u) \implies H \ \text{is} \  \ell \text{-choosable})$. When stabilization is reached, we return the matrix.

We now show how the algorithm proves that $(L(v) \nsubseteq L(u) \implies H \ \text{is} \  \ell \text{-choosable})$ for some pair $u,v$ such that $M(u,v)=0$.

Suppose that $L(v)$ is not included in $L(u)$. We can thus color $v$ with a color $c \notin L(u)$ and then $|L(u)|$ does not decrease. Moreover, for each neighbor $w$ of $v$ such that $M(u,w) = 1$, then we have two cases. Either $c \in L(w)$ so $L(w) \nsubseteq L(u)$ and $H$ is $\ell$-choosable since $M(u,w)=1$. Otherwise, $c \notin L(w)$ and $|L(w)|$ does not decrease either when $v$ gets color $c$. 

Let $\ell' : V(H - v) \mapsto \mathbb{N}$ be the updated lengths of the lists, \emph{i.e.} $\ell'(w)=\ell(w)-1$ when $w$ is a neighbor of $v$ such that $M(u,w)=0$, and $\ell'(w)=\ell(w)$ otherwise. Now, if $H - v$ is $\ell'$-choosable, then $H$ is $\ell$-choosable. In particular, if $\A(H - v, \ell')$ returns \texttt{True}, we can set $M(u,v) = 1$.

\paragraph{Running the program.}
We have applied our heuristic on each of our $40$ configurations. It manages to prove that $28$ of them are reducible. These are depicted in Figure~\ref{fig:theconfigs}. We prove in Section~\ref{sec:remaining} that the $12$ remaining configurations are reducible. Note that even if the algorithm $\A$ fails to reduce some configurations, we still make use of the inclusion matrix it computes to prove that the configurations are indeed reducible.
\input{remaining_configurations/theconfigs}

\subsection{Separating cycles}
\label{sec:sepcyc}
We now tackle auxiliary configurations whose reduction will make the other reductions easier.  Note that each cycle of $G$ separates the plane in three connected components: its \emph{boundary}, its \emph{inside} and its \emph{outside}. We say that $C$ is a \emph{separating cycle} if both its inside and its outside contain at least one vertex of $G$. A vertex of $C$ is \emph{unbalanced} if all its neighbors in $V \setminus C$ are in the same side of $G[V \setminus C]$ and is \emph{balanced} otherwise. A separating cycle is \emph{$k$-balanced} if at most $k$ vertices of $C$ are balanced. 
The goal of this section is to show the following.

\begin{theorem}\label{th:Csep}
The graph $G$ does not contain any $4$-balanced separating cycle of length at most $5$.
\end{theorem}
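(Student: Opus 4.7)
The natural strategy is proof by contradiction: assume a $4$-balanced separating cycle $C = v_1 \cdots v_k$ with $k \le 5$ exists in $G$ and split $G$ along it. Let $V_{\mathrm{in}}, V_{\mathrm{out}}$ be the vertices strictly inside and outside the disk bounded by $C$, and set $G_1 = G[V_{\mathrm{in}} \cup V(C)]$, $G_2 = G[V_{\mathrm{out}} \cup V(C)]$: both are planar subgraphs of $G$ with maximum degree at most $4$, and both have strictly fewer vertices than $G$ since each side of $C$ is nonempty. By minimality of $G$, each admits a distance-$2$ $12$-coloring $\alpha_1$ of $G_1$ and $\alpha_2$ of $G_2$. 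Because $|C| \le 5$, the cycle has diameter at most $2$, so the values $c_i := \alpha_1(v_i)$ and $d_i := \alpha_2(v_i)$ each comprise $k$ pairwise distinct colors. Moreover, any pair of vertices of $V_{\mathrm{in}} \cup V(C)$ at distance $\le 2$ in $G$ is already at distance $\le 2$ in $G_1$ (a putative shortcut through $V_{\mathrm{out}}$ would have both endpoints on $C$, where $C$ itself provides a length-$\le 2$ path), and symmetrically for $G_2$.

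Next I would pick a permutation $\sigma$ of the $12$ colors with $\sigma(d_i) = c_i$ for every $i$ and define the merged coloring $\alpha$ equal to $\alpha_1$ on $V_{\mathrm{in}} \cup V(C)$ and to $\sigma \circ \alpha_2$ on $V_{\mathrm{out}}$. On each of $G_1, G_2$ this is a valid distance-$2$ coloring by the previous observation, so the only possible violation is a pair $(x, y)$ with $x \in V_{\mathrm{in}}$, $y \in V_{\mathrm{out}}$ and $d_G(x, y) \le 2$; such a pair must have a common neighbor on $C$, which is therefore balanced. Since a balanced vertex has two $C$-neighbors and degree at most $4$, each balanced $v_i$ has exactly one inside neighbor $u_i$ and one outside neighbor $w_i$, so the remaining constraints are $\alpha_1(u_i) \neq \sigma(\alpha_2(w_i))$, one per balanced vertex, hence at most four by hypothesis.

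It remains to realize such a $\sigma$. Observe that $\alpha_1(u_i) \notin \{c_{i-1}, c_i, c_{i+1}\}$ because $u_i$ is at distance $\le 2$ from $v_{i-1}, v_i, v_{i+1}$ in $G_1$, and symmetrically $\alpha_2(w_i) \notin \{d_{i-1}, d_i, d_{i+1}\}$. Split into subcases. If $\alpha_2(w_i) \notin \{d_1, \ldots, d_k\}$, then $\sigma(\alpha_2(w_i))$ is a free coordinate chosen among the $\ge 12 - k \ge 7$ colors outside $\{c_1, \ldots, c_k\}$ and the constraint forbids at most one target. If instead $\alpha_2(w_i) = d_j$, then $\sigma(\alpha_2(w_i)) = c_j$ is already fixed, and the preceding observation applied in $G_2$ forces $j \notin \{i-1, i, i+1\}$; for $k \le 3$ this subcase is vacuous, and for $k \in \{4, 5\}$ only $j \in \{i-2, i+2\}$ remains possible, in which case one needs $\alpha_1(u_i) \neq c_j$.

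The hard part will be ruling out the bad situation of the second subcase, where $\alpha_1(u_i) = c_{i\pm 2}$ would make $\sigma$ impossible to complete. My plan is to remove it by choosing $C$ to minimize the number of interior vertices among all $4$-balanced separating cycles of length at most $5$: if the bad equality holds, the distance $\ge 3$ it forces between $u_i$ and $v_{i\pm 2}$ in $G_1$, together with planarity, should produce a strictly smaller $4$-balanced separating cycle passing through $u_i$ inside the disk of $C$, contradicting minimality. Here the $4$-balanced hypothesis is crucial: it ensures at least one unbalanced vertex when $k = 5$, which limits how much obstruction the bad subcase can accumulate. Once the second subcase is disposed of, the first-subcase inequalities (at most four) reduce to a Hall-type matching problem on $\ge 7$ free colors with $\le 4$ forbidden pairs, which is trivially solvable; the resulting $\sigma$ gives a distance-$2$ $12$-coloring of $G$, contradicting $\chi_2(G) > 12$.
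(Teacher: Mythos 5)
Your decomposition (keep the whole cycle $C$ in both pieces, color $G[V_{\mathrm{in}}\cup V(C)]$ and $G[V_{\mathrm{out}}\cup V(C)]$ by minimality, then permute the outside coloring to agree on $C$) is sound up to the point you yourself flag, and your reduction of the merging problem to the constraints $\alpha_1(u_i)\neq\sigma(\alpha_2(w_i))$ at balanced vertices is correct. But the ``second subcase'' is a genuine gap, not a technicality: when $\alpha_2(w_i)=d_{i\pm2}$ the value $\sigma(\alpha_2(w_i))=c_{i\pm2}$ is forced, and nothing prevents $\alpha_1(u_i)=c_{i\pm2}$. Your proposed fix --- take $C$ minimizing the number of interior vertices and extract a strictly smaller $4$-balanced separating cycle --- does not work as stated: the obstruction is a coincidence of colors in two colorings chosen independently by minimality of $G$, not a structural feature of $G$. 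The only structural information the bad case yields is $d_{G_1}(u_i,v_{i\pm2})\ge 3$ and $d_{G_2}(w_i,v_{i\pm2})\ge 3$, and such distance lower bounds do not produce any new separating cycle; moreover, minimizing the interior of $C$ gives you no control over which colorings of the two pieces exist, so you cannot legislate the bad coincidence away. Recoloring $u_i$ or $w_i$ directly is also not free, since these vertices can see up to $16$ colors at distance $2$. As written, the argument therefore does not close.

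It is worth noting that the paper's proof is built precisely to avoid this trap, by a different split: only the balanced vertices $Y$ are shared between the two pieces, and Claim~\ref{clm:propsep} adds edges (while staying in $\mathcal{F}$) so that the balanced vertices are pairwise adjacent in one of the two graphs and distances between cycle vertices do not increase. After identifying the colors on $Y$ and confining the colors of $C$ to $\{1,\dots,|C|\}$, the neighbors of $Y$ off the cycle are explicitly \emph{recolored} using the spare colors $6,\dots,12$, split between the two sides; this recoloring step is exactly the mechanism that eliminates the conflicts your forced permutation cannot handle (a neighbor of a balanced vertex carrying the color of another cycle vertex). If you want to rescue your cleaner decomposition, you would need an analogous recoloring or list argument for $u_i,w_i$ rather than a minimal-cycle argument.
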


The rest of this subsection is devoted to prove this statement.
Assume that $G$ contains such a separating cycle $C$. Let us denote by $X= \{ x_1,\ldots,x_\ell \}$ the vertices of $C$ in a cyclic ordering (with $\ell \le 5$) and denote by $Y$ the vertices of $C$ that are balanced (with $|Y| \le 4$ and following again the cyclic ordering of $C$). Note that $|Y| \geq 2$ otherwise $G$ would admit a cut vertex.

Let $G_1$ (resp. $G_2$) be the graph induced by the vertices of $G$ inside (resp. outside) $C$ together with their neighbors in $C$. Note that all the vertices of $G$ lie either in $G_1$ or $G_2$, except for the vertices of $Y$ that appear in both graphs. We claim that the following holds:

\begin{claim}\label{clm:propsep}
It is possible to add edges in $G_1$ and $G_2$ to create two graphs $H_1$ and $H_2$ in such a way that:
\begin{enumerate}
    \item\label{p1} $H_1,H_2\in\mathcal{F}$,
    \item\label{p3} for every pair of vertices $y_i,y_j$ in $Y$, $y_i$ and $y_j$ are adjacent in $H_1$ or $H_2$.
    \item\label{p4} for every pair of vertices $a,b \in C$ that belong to $H_i$ for $i \in \{ 1,2\}$ then $d_{H_i}(a,b) \le d_G(a,b)$.
\end{enumerate}
\end{claim}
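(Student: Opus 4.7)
The plan is to construct $H_1$ and $H_2$ by adding chords of $C$ to $G_1$ and $G_2$ respectively, each new chord being drawn inside the face of $G_i$ that is bounded by $C$ (the outer face in the embedding inherited from $G$).

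I would first perform a degree count. A balanced vertex $y\in Y$ has maximum degree $4$, two cycle edges, and at least one internal neighbor on each side, so $\deg_G(y)=4$ with exactly one internal neighbor per side; hence $\deg_{G_i}(y)=3$ and $y$ can receive at most one new chord in each $H_i$. This also forbids any chord of $C$ already in $G$ from being incident to a balanced vertex, so every non-cycle-edge pair of $Y$-vertices must be realized by a new chord of our construction. An unbalanced vertex $v$ whose internal neighbors all lie on side $j$ has $\deg_{G_{3-j}}(v)=2$, so it admits up to two new chords in $H_{3-j}$.

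The construction is then carried out by case analysis on $(\ell,|Y|)$. For $\ell\leq 3$ the claim is immediate with $H_i=G_i$. For $\ell=4$, I would place each of the two diagonals of $C$ in a different outer face (forced because the diagonals cross, and consistent with each balanced vertex taking at most one diagonal per side). For $\ell=5$ with $|Y|\leq 3$, at most three non-cycle $Y$-pairs remain and can be covered by a single chord per side, routed through an unbalanced vertex's spare capacity when the $Y$-chord itself cannot be added. The critical case $\ell=5$, $|Y|=4$ is handled by using the two chord slots at the unique unbalanced vertex $x_5$ in the side where it has no internal neighbor to triangulate that outer face at $x_5$, while placing one well-chosen $Y$-chord in the opposite outer face.

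To conclude, property~(1) is immediate from placing each chord inside a planar face together with the degree bookkeeping above; property~(2) is verified pair by pair by the case analysis; and property~(3) follows because any $G$-edge between $a,b\in C$ drawn on side $j$ is either a cycle edge (present in both $H_i$'s) or is re-added as a chord in $H_{3-j}$, while every length-two path in $G$ between two $C$-vertices is already realized inside the cycle $C$ itself, since two vertices of a cycle of length $\leq 5$ lie at cycle-distance $\leq 2$. The main obstacle will be the case $\ell=5$, $|Y|=4$: the pentagonal outer face admits no non-crossing matching of size two on any four of its vertices, so the chord coverage must route through the unbalanced vertex while still satisfying properties~(2) and~(3) simultaneously for every pair of $Y$-vertices, and it is this combinatorial bookkeeping that is the technical heart of the proof.
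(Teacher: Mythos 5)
Your construction runs into a genuine problem at the level of the definitions of $G_1$ and $G_2$. In the paper's setup only the balanced vertices $Y$ belong to both graphs: an unbalanced vertex of $C$ whose off-cycle neighbors lie on side $j$ is a vertex of $G_j$ only, and it simply does not exist in $G_{3-j}$ (the paper states this explicitly). Hence your key resource for the critical case --- ``two chord slots at the unique unbalanced vertex $x_5$ in the side where it has no internal neighbor'' --- is unavailable: one cannot add edges at $x_5$ in the graph that does not contain $x_5$. Even if one ignored this, triangulating at $x_5$ adds only edges of the form $x_5x_i$, which make no pair of $Y$ adjacent, so with a single $Y$-chord on the other side two of the three non-adjacent pairs of $Y$ remain uncovered and property~(2) fails. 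The actual slack lies elsewhere, and your blanket bound ``$\deg_{G_i}(y)=3$, so at most one new chord per balanced vertex per side'' hides it: a balanced vertex also loses one unit of degree in $G_i$ for each of its \emph{cycle} neighbors absent from $G_i$ (the paper records exactly this). In the case $\ell=5$, $|Y|=\{x_1,\dots,x_4\}$ with $x_5\in G_1$, the vertices $x_1$ and $x_4$ have degree only $2$ in $G_2$, so one can place the two chords $x_1x_4$ and $x_2x_4$ (non-crossing, sharing the endpoint $x_4$) in $G_2$ and $x_1x_3$ in $G_1$; this is the paper's resolution, and it is incompatible with your ``one chord per balanced vertex per side'' accounting.

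The same misreading undermines your verification of property~(3). You argue that cycle edges are ``present in both $H_i$'s'' and that any two $C$-vertices are joined by a cycle path of length at most $2$; but when an unbalanced vertex $x_k$ is missing from $G_i$, the cycle edges at $x_k$ and the length-two cycle paths through $x_k$ are missing from $H_i$ as well, so $d_{H_i}(a,b)\le d_G(a,b)$ is not automatic for the pairs of $C\cap H_i$ that used to be connected through $x_k$. The paper addresses precisely this with its saturation operation (whenever $a,b,c$ are consecutive on $C$ with $a,c\in G_i$ and $b\notin G_i$, add $ac$ in $G_i$) together with the extra chords $z_i'x_1$, $x_2z_i$ in the two-consecutive-balanced case; your proposal contains no substitute for these steps. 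So beyond the critical case, the distance condition is simply not established by your argument.
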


\begin{proof}
All along the proof, whenever there exists $3$ consecutive unbalanced vertices $a,b,c$ in $C$ such that $a,c \in G_i$ and $b \notin G_i$
, we add the edge $ac$ in $G_i$ if it does not already exist. We call this operation the saturation operation.

Note that in the graph $G_i$, every vertex $y$ of $Y$ has degree at most $3$. Moreover, its degree is reduced by one more for every additional neighbor of $y$ in $C$ which is not in $G_i$.
We distinguish different cases depending on how many vertices of $Y$ are consecutive in $C$. 
\smallskip

\noindent
\textbf{Case 1.} $C$ contains four consecutive balanced vertices. \\
Without loss of generality, we can assume that $Y=\{ x_1,\ldots,x_4 \}$. By definition, no other vertex of $C$ is in $Y$. 

If $C=Y$ and then, we simply add $x_1x_3$ in one graph and $x_2x_4$ in the second, which satisfies all the points. 
Otherwise, exactly one vertex of $C$ is not in $Y$, w.l.o.g. in $G_1$. Then we simply add $x_1x_3$ in $G_1$ and $x_2x_4, x_1x_4$ in $G_2$. 
\smallskip

\noindent
\textbf{Case 2.} $C$ contains three consecutive balanced vertices. \\
Without loss of generality, we can assume that $Y$ is $x_1,x_2,x_3$ by Case 1.
We can add $x_1x_3$ in both graphs in order to get (\ref{p3}) and (\ref{p4}).
\smallskip

\noindent
\textbf{Case 3.} $C$ contains two consecutive balanced vertices. \\
Without loss of generality, we can assume that $Y$ contains $x_1,x_2$. 

First assume that $Y$ only contains $x_1,x_2$. Then (\ref{p3}) indeed holds. For every $i \le 2$, let $z_i$ (resp. $z_i'$) the smallest (resp. largest) vertex of $C \setminus Y$ that is in $G_i$ (they might not exist and might be the same). For $i \le 2$, we add the edges $z_i'x_1$ and $x_2z_i$ in $H_i$. These edges together with the saturation operation ensures that (\ref{p4}) also holds.

So $C$ is a 5-cycle and the set $Y$ must contain a third vertex which is $x_4$. We add the edge $x_2x_4$ (resp. $x_1x_4$) in the graph $G_i$ which does not contain $x_3$ (resp. $x_5$). So (\ref{p3}) holds. Moreover, if both $x_3,x_5$ belong to the same graph $G_i$, we add $x_1x_4$ to $G_i$ which ensures that (\ref{p4}) also holds. 
\smallskip

\noindent
\textbf{Case 4.} $C$ does not contain consecutive balanced vertices. \\
Without loss of generality, we can assume that $x_1 \in Y$. If it is the only one, the saturation property ensures that the conclusion holds. Otherwise, since $|Y| \ge 2$ (otherwise we would have two consecutive vertices in $Y$), we can assume by symmetry that $Y=\{ x_1,x_3 \}$. 
We now simply add an edge $x_1x_3$ in both graphs to get (\ref{p3}) and (\ref{p4}), which completes the proof. 
\end{proof}

Let us prove that we can conclude using Claim~\ref{clm:propsep}. Let $H_1$ and $H_2$ be the two graphs given by Claim~\ref{clm:propsep}. By minimality, they both admit a distance-$2$ $12$-coloring. We use these colorings and some recoloring steps to construct a distance-2 coloring of $G$ with 12 colors, which ends the proof of Theorem~\ref{th:Csep}. 

Note that in both colorings, the vertices of $Y$ receive distinct colors by (\ref{p4}). So, free to permute colors, we can assume that the vertices of $Y$ are colored alike in both graphs and moreover each vertex $x_i$ is colored $i$. Moreover, we also recolor the vertices of $C\setminus Y$ in such a way that only colors $\{1,\ldots,|C|\}$ appear on $C$.

We first explain that, up to permuting some colors, no vertices of $N(Y)$ colored alike are within distance $2$ in $G$. We will then see how to recolor vertices in $N(C)$ so that we obtain a valid coloring of $G$. Let us denote by $Z$ the vertices $z$ of $N(Y) \setminus C$ with multiplicity (if $z$ is a neighbor of both $y$ and $y'$ we put it twice in $Z$) such that the color of $z$ is not a color of a vertex of $C$. And let $Z_i = Z \cap V(G_i)$ for $i \le 2$. Note that since $|Y| \le 4$, the sizes of both $Z_1$ and $Z_2$ are at most $4$. Also note that only colors $\{1,\ldots,|C|\}$ may appear on $C$, so we can freely use the colors $6,\ldots,12$ (which contains $7$ colors).
\begin{itemize}
    \item If (at least) $2$ vertices of $Z_i$ are colored the same for some $i\le2$, then we recolor them to color $6$. Now we may recolor the at most two remaining vertices of $Z_i$ so that only colors $\{6,7,8\}$ appear on $Z_i$. We may now recolor the vertices of $Z_{3-i}$ with colors from $\{9,10,11,12\}$. 
    \item Otherwise, assume that that $|Z_1|\geqslant|Z_2|$. Then we recolor the vertices of $Z_1$ with pairwise distinct colors from $\{6,\ldots,12\}$ (which is possible since the set has size at most $4$). Now, we can greedily color vertices of $Z_2$ so that each vertex receives a different color from the previously colored vertices in $Z_2$ and from its distance-$2$ neighbors in $Z_1$. Indeed, at each step, at most $4$ colors are forbidden.
\end{itemize}

A \emph{conflict} is a pair of vertices at distance at most $2$ in $G$ that have the same colors. After applying the previous operations, there is no conflict involving vertices in $N(Y)$ anymore. 
There is indeed no conflict between two vertices of $G_1$ (resp. $G_2$). Now consider a pair of vertices in $N(Y)$, one in each $G_i$, and within distance two of each other (so that they share a common neighbor in $Y$). If one of them, called $u$, is colored with the color of some vertex $y$ of $Y$ the other is not since, in at least one of $H_1,H_2$, $u$ is at distance at most $2$ from $y$ by (\ref{p3}).
If the common color is not in $\{ 1,\ldots, 5 \}$, every pair of vertices in $N(Y)$ in $G_1$ and $G_2$ adjacent to the same vertex are colored differently by the above recoloring process and no vertex of $N(Y)$ can be colored with the color of a vertex of $C$ by assumption. So the only remaining conflicts implies some vertex in $N(C)$ and more precisely, for every vertex $x$ of $C \setminus Y$, if $z$ is a neighbor of $x$, then $z$ can only be in conflict with a neighbor of $x$ in $C \setminus Y$ which is in the other graph. In particular, we have $|Y|\leqslant 3$ and $z$ must have a color in $\{1,\ldots,|C|\}$.

Let us now prove that if a vertex $z$ in $N(C)$ has the same color as some $x_i\in Y$ then there is no conflict involving $z$. Indeed, observe that each vertex of $C$ lies within distance $2$ from $x_i$ in either $H_1$ or $H_2$ by (\ref{p3}) and (\ref{p4}), so it cannot be colored with the color of $x_i$, hence of $z$. Moreover, $z$ and $x_i$ cannot be within distance $2$ in $G$, otherwise they would also be within distance $2$ in $H_1$ or $H_2$. Therefore, $z$ may only be in conflict with some vertices in $C\setminus Y$. 

We prevent that by recoloring $N(C\setminus Y)$ with colors larger than $|C|$. Recall that $|Y|\leqslant 3$, hence $|Z_1|\leqslant 3$ and $|Z_2|\leqslant 3$. Therefore, we fixed at most three colors in $H_1$ (resp. $H_2$) corresponding to the vertices in $Z_1$ (resp. $Z_2$). So if there are at most two vertices $x,x'$ of $C \setminus Y$ in $G_1$ (resp. $G_2$), there remain four colors available to recolor their neighbors in $G_1$ (resp. $G_2$) avoiding the colors $\{1,\ldots,|C|\}$ and from $Z_1$ (resp. $Z_2$). 
Finally, there cannot be three vertices in $C \setminus Y$ in $G_1$ and one in $C \setminus Y$ in $G_2$ since $|Y| \ge 2$, which completes the proof of Theorem~\ref{th:Csep}.

\subsection{Remaining configurations}
\label{sec:remaining}
We now reduce the $12$ remaining configurations from $\mathcal{C}$. For brevity, in what follows, when reducing a configuration $C$, we only give the graph $G_C$ (that we denote by $G'$ when $C$ is clear from the context). We will not explicitly check that the adjacencies at distance $2$ are preserved. We only compute the number of forbidden colors for the vertices $v$ of $G$ that are not colored in $G'$.

\begin{lemma}\label{lem:config1}
The graph $G$ does not contain a $3$-vertex incident to a $3$-face.
\end{lemma}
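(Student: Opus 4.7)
The plan is to apply the generic reduction scheme outlined at the start of Section~\ref{sec:remaining}: assume for contradiction that $G$ contains a $3$-vertex $v$ incident to a $3$-face, and let $u,w$ be the two neighbors of $v$ forming this $3$-face (so $uv, vw, uw \in E(G)$), and let $x$ be the third neighbor of $v$. Set $G' = G - v$. Since $G$ is planar with maximum degree $4$, so is $G'$, hence $G' \in \mathcal{F}$. As $|V(G')| < |V(G)|$, by minimality $G'$ admits a distance-$2$ $12$-coloring~$\alpha$.

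The only vertex of $G$ not colored by $\alpha$ is $v$, so it remains to show that we can extend $\alpha$ to $v$. For this I will bound the number of distinct vertices within distance at most $2$ from $v$ in $G$ (excluding $v$ itself), which is the only quantity that matters by the remark preceding this lemma. The neighbors of $v$ are $u$, $w$, $x$. The second neighbors of $v$ lie in $N(u) \cup N(w) \cup N(x)$. Since $u$ and $w$ are joined by the edge of the $3$-face and both are adjacent to $v$, each of $u,w$ contributes at most $\deg(u)-2 \le 2$ and $\deg(w)-2 \le 2$ new second neighbors respectively, while $x$ contributes at most $\deg(x)-1 \le 3$. This gives at most $3 + 2 + 2 + 3 = 10$ vertices at distance $\le 2$ from~$v$.

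Hence at most $10$ colors are forbidden on~$v$, so at least $12 - 10 = 2$ colors remain available. Picking any such color completes $\alpha$ into a distance-$2$ $12$-coloring of~$G$, contradicting the assumption $\chi_2(G) > 12$. I do not anticipate a genuine obstacle here; the only thing to verify carefully is the counting of second neighbors, which rests entirely on the fact that the edge $uw$ of the $3$-face is shared between $u$ and $w$ and so saves one neighbor on each side compared to a naive bound.
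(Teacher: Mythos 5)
There is a genuine gap, and it is exactly the point that the added edge in the paper's construction is there to handle. You set $G' = G - v$ and argue only about extending the coloring $\alpha$ of $G'$ to $v$. But for this to yield a distance-$2$ coloring of $G$, the restriction of $\alpha$ to $V(G)\setminus\{v\}$ must already be a \emph{partial distance-$2$ coloring of $G$}, i.e.\ every pair of vertices at distance $2$ \emph{in $G$} must receive distinct colors. The pairs $(u,x)$ and $(w,x)$ are at distance $2$ in $G$ only through $v$; after deleting $v$ they may be arbitrarily far apart in $G'$, so $\alpha$ may well assign, say, $u$ and $x$ the same color. That conflict does not involve the color of $v$, so no choice of color for $v$ can repair it, and your extension argument does not produce a valid coloring of $G$. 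The sentence before Section~5.3 in the paper (``we will not explicitly check that the adjacencies at distance $2$ are preserved'') is not a license to ignore this; it presupposes that $G_C$ is built so that these distance-$2$ adjacencies \emph{are} preserved, which is precisely why such constructions add edges.

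The paper's proof fixes this by taking $G' = G - a + bd$ (in your notation, $G - v + ux$): then $u$ and $x$ are adjacent in $G'$, and $w$ and $x$ are at distance $2$ in $G'$ via the triangle edge $uw$, so every distance-$2$ pair of $G$ avoiding $v$ is still a distance-$2$ pair of $G'$ and $\alpha$ restricts to a genuine partial coloring of $G$. One must also check $G'\in\mathcal{F}$: planarity holds since the new edge can be routed through the position of the deleted vertex $v$, and no degree increases since $u$ and $x$ each trade the neighbor $v$ for each other. After that, your counting of at most $3+2+2+3=10$ forbidden colors for $v$ is correct and matches the paper's bound $4+2\cdot 3=10<12$, so the rest of your argument goes through once the construction of $G'$ is corrected.
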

\begin{figure}[hbtp]
        \begin{center}
        \tikzstyle{vertex}=[circle,draw, minimum size=15pt, scale=1, inner sep=1pt]
        \tikzstyle{vertexa}=[circle,draw, minimum size=15pt, scale=1, inner sep=1pt, fill=black!5, very thick]
        \tikzstyle{vertexbc}=[circle,draw, minimum size=15pt, scale=1, inner sep=1pt, fill=black!5]
        \tikzstyle{3vertex}=[draw, minimum size=15pt, scale=1, inner sep=1pt]

        \begin{tikzpicture}[scale=1.2]
            \node (a) at (0.866025404, 0.5) [3vertex] {$a$};
            \node (b) at (0,1) [vertex] {$b$};
            \node (c) at (0,0) [vertex] {$c$};
            \node (d) at (1.866025404,0.5) [vertex] {$d$};
            
            \draw [-, line width=0.03cm] (a) to (b);
            \draw [-, line width=0.03cm] (a) to (c);
            \draw [-, line width=0.03cm] (b) to (c);
            \draw [-, line width=0.03cm] (a) to (d);
            
        \end{tikzpicture}
        \end{center}
        \caption{Forbidden configuration of Lemma~\ref{lem:config1}}
        \label{fig:sconf1}
    \end{figure}
\begin{proof}
Assume that $G$ contains a $3$-vertex $a$ incident to a $3$-face $[a, b, c]$ and a neighbor $d$ (see Figure~\ref{fig:sconf1} for an illustration). Define $G' = G - a + bd\in\mathcal{F}$. By minimality, $G'$ admits a distance-$2$ $12$-coloring. This coloring is a partial coloring of $G$ and forbids at most $4 + 2\cdot3 = 10 < 12$ colors for $a$. So it can be extended into a distance-$2$ $12$-coloring of $G$, which completes the proof.
\end{proof}

\begin{lemma}\label{lem:config2}
The graph $G$ does not contain a $4$-vertex incident with two $3$-faces and a $4$-face, nor to a $3$-face and three $4$-faces.
\end{lemma}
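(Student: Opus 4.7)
The plan is, in both configurations, to remove $v$ from $G$, add a small number of edges among its neighbors to recover the distance-$2$ constraints originally going through $v$, and then use the minimality of $G$ to extract a distance-$2$ $12$-coloring $\alpha$ of the resulting graph $G' \in \mathcal{F}$. A short calculation shows that $v$ has at most $11$ vertices within distance $2$ in $G$, so at least one color is free for $v$; extending $\alpha$ to $v$ then contradicts the minimality of $G$.

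For the first configuration, we label the neighbors of $v$ cyclically as $v_1, v_2, v_3, v_4$ and split cases according to whether the two $3$-faces share an edge at $v$ (adjacent case, WLOG $v v_1 v_2$ and $v v_2 v_3$) or are opposite (WLOG $v v_1 v_2$ and $v v_3 v_4$), and further according to the positions of the specified $4$-face and of the fourth face around $v$. In each subcase, we add a single edge such as $v_1 v_4$ or $v_3 v_4$ between two non-adjacent neighbors of $v$. Each endpoint of the added edge has degree at most $3$ in $G-v$ (since it has lost its edge to $v$ in a graph of maximum degree $4$), so the max-degree bound is preserved, and both endpoints lie on the boundary of the face created by removing $v$, so the new edge can be drawn planarly inside that face. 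A direct check shows that the added edge, combined with existing paths through vertices of the $3$-faces and through the interior vertex of the $4$-face, forces all six pairs $(v_i, v_j)$ to receive distinct colors in $\alpha$.

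For the second configuration, we denote the $3$-face by $v v_1 v_2$ and the three $4$-faces by $v v_2 a v_3$, $v v_3 b v_4$, $v v_4 c v_1$, so that removing $v$ creates a face bounded by the $7$-cycle $v_1 v_2 a v_3 b v_4 c$. The two distance-$2$ constraints through $v$ not already enforced in $G - v$ are $v_1 \neq v_3$ and $v_2 \neq v_4$; however, the natural chords $v_1 v_3$ and $v_2 v_4$ interleave on this cycle and cannot both be drawn planarly. The key step is to add instead the non-interleaving pair $v_2 v_3$ and $v_1 v_4$ inside the merged face: each of $v_1, \ldots, v_4$ gains exactly one edge after losing its edge to $v$, so max degree remains at most $4$. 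In the resulting $G' \in \mathcal{F}$, the pairs $(v_1, v_2)$, $(v_2, v_3)$ and $(v_1, v_4)$ become edges, $(v_3, v_4)$ is at distance $2$ via $b$, and the short paths $v_1 v_2 v_3$ and $v_2 v_1 v_4$ put $(v_1, v_3)$ and $(v_2, v_4)$ at distance $2$ in $G'$; hence all six pairs receive distinct colors in any distance-$2$ $12$-coloring of $G'$. Finally, the distance-$2$ neighborhood of $v$ in $G$ has at most $4 + 3 + 4 = 11$ vertices (the four neighbors, the three face-interior vertices $a, b, c$, and one additional fourth neighbor per $v_i$), leaving a free color.

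The main obstacle is the planarity constraint in the second configuration, which forbids the most natural pair of chords enforcing the missing constraints and forces the indirect choice of $v_2 v_3$ and $v_1 v_4$, together with the short propagation argument needed to recover $v_1 \neq v_3$ and $v_2 \neq v_4$. In the first configuration, the difficulty is essentially bookkeeping: cleanly enumerating subcases depending on the cyclic arrangement of the two $3$-faces, the position of the $4$-face, and the length of the fourth face around $v$, and verifying that a single added edge works in each.
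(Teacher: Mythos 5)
Your proposal is correct and follows essentially the same route as the paper: delete the $4$-vertex, add a few non-crossing chords between its neighbours inside the merged face (each endpoint having lost its edge to the deleted vertex, so planarity and maximum degree $4$ are preserved), so that every pair of its neighbours remains within distance $2$, then invoke minimality and observe that at most $11$ colors are forbidden at the deleted vertex --- in particular your chord choice $v_2v_3$, $v_1v_4$ for the $3$-face-plus-three-$4$-faces case is exactly the paper's. The only (harmless) difference is in the first case: the paper discards the arrangement where the two $3$-faces share an edge at the vertex by citing a configuration already reduced by the heuristic (a $3$-face edge-adjacent to a $3$-face and a $4$-face) and then adds two chords, whereas you handle all cyclic arrangements directly with a single well-chosen chord, which also works.
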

\begin{figure}[hbtp]
        \begin{center}
        \tikzstyle{vertex}=[circle,draw, minimum size=15pt, scale=1, inner sep=1pt]
        \tikzstyle{vertexa}=[circle,draw, minimum size=15pt, scale=1, inner sep=1pt, fill=black!5, very thick]
        \tikzstyle{vertexbc}=[circle,draw, minimum size=15pt, scale=1, inner sep=1pt, fill=black!5]
        \tikzstyle{3vertex}=[draw, minimum size=15pt, scale=1, inner sep=1pt]

        \begin{tikzpicture}[scale=1.2]
            \node (a) at (0, 0) [vertex] {$a$};
            \node (b) at (150:0.707) [vertex] {$b$};
            \node (c) at (210:0.707) [vertex] {$c$};
            \node (d) at (30:0.707) [vertex] {$d$};
            \node (e) at (-30:0.707) [vertex] {$e$};
            \node (f) at ($(c)+(-30:0.707)$) [vertex] {$f$};

            \draw [-, line width=0.03cm] (a) to (b);
            \draw [-, line width=0.03cm] (a) to (c);
            \draw [-, line width=0.03cm] (b) to (c);
            \draw [-, line width=0.03cm] (a) to (d);
            \draw [-, line width=0.03cm] (a) to (e);
            \draw [-, line width=0.03cm] (e) to (d);
            \draw [-, line width=0.03cm] (e) to (f);
            \draw [-, line width=0.03cm] (c) to (f);
            
        \end{tikzpicture}
        \begin{tikzpicture}[scale=1.2]
            \node (a) at (0, 0) [vertex] {$a$};
            \node (b) at (135:0.707) [vertex] {$b$};
            \node (c) at (225:0.707) [vertex] {$d$};
            \node (d) at (45:0.707) [vertex] {$c$};
            \node (e) at (-45:0.707) [vertex] {$e$};
            \node (f) at (0,-1) [vertex] {$f$};
            \node (g) at (-1,0) [vertex] {$g$};
            \node (h) at (1,0) [vertex] {$h$};

            \draw [-, line width=0.03cm] (a) to (b);
            \draw [-, line width=0.03cm] (a) to (c);
            \draw [-, line width=0.03cm] (g) to (c);
            \draw [-, line width=0.03cm] (g) to (b);
            \draw [-, line width=0.03cm] (h) to (e);
            \draw [-, line width=0.03cm] (h) to (d);
            \draw [-, line width=0.03cm] (b) to (d);
            \draw [-, line width=0.03cm] (a) to (d);
            \draw [-, line width=0.03cm] (a) to (e);
            \draw [-, line width=0.03cm] (e) to (f);
            \draw [-, line width=0.03cm] (c) to (f);
            
        \end{tikzpicture}
        \end{center}
        \caption{Forbidden configurations  of Lemma~\ref{lem:config2}}
        \label{fig:sconf2}
    \end{figure}
\begin{proof}
Assume that $G$ contains such a vertex. Using that $G$ does not contain any $3$-face edge-incident to a $3$-face and a $4$-face (this is reduced by the heuristic), we are in one of the situations depicted in Figure~\ref{fig:sconf2}. In both cases, define $G' = G - a + bd + ce\in\mathcal{F}$. By minimality, it admits a distance-$2$ $12$-coloring.
This coloring is a partial coloring of $G$ and forbids at most $11$ colors for $a$, so it can be extended into a distance-$2$ $12$-coloring of $G$, which completes the proof.
\end{proof}

\begin{lemma}\label{lem:config3}
The graph $G$ does not contain a $3$-vertex incident to two $4$-faces.
\end{lemma}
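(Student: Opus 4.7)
The plan is to follow the template of Lemmas~\ref{lem:config1} and~\ref{lem:config2}: remove $a$, add a single edge, and extend a minimal coloring. Assume for contradiction that $G$ contains a $3$-vertex $a$ incident to two $4$-faces. Since $a$ has only three incident edges and each $4$-face uses two of them, the two $4$-faces share an edge at $a$. I label the neighbors of $a$ as $b, c, d$ with $c$ the shared neighbor, so that the two $4$-faces are $[a, b, x, c]$ and $[a, c, y, d]$ for some $x, y \in V(G)$. By Lemma~\ref{lem:config1}, the third face at $a$ is not a $3$-face.

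I then set $G' = G - a + bd$ and verify $G' \in \mathcal{F}$. Planarity holds because deleting $a$ merges the three faces around it into a single face whose boundary contains both $b$ and $d$ (joined through the non-triangular third face at $a$), so the edge $bd$ can be drawn inside. The maximum degree is preserved since $b$ and $d$ each lose $a$ as a neighbor before gaining each other. By minimality of $G$, the graph $G'$ admits a distance-$2$ $12$-coloring $\alpha$. A short verification shows that $\alpha$ restricted to $V(G)\setminus\{a\}$ is a valid partial distance-$2$ coloring of $G$: pairs at distance $\le 2$ in $G$ via a path avoiding $a$ are still at distance $\le 2$ in $G'$, while the remaining such pairs lie within $\{b, c, d\}$ and are covered by the paths $b$--$x$--$c$, $c$--$y$--$d$, and the added edge $bd$.

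Finally, I extend $\alpha$ to $a$ by a color count. The vertices at distance $\le 2$ from $a$ in $G$ other than $a$ itself are contained in $\{b, c, d\} \cup \bigl((N(b) \cup N(c) \cup N(d)) \setminus \{a\}\bigr)$. Using $|N(b) \cap N(c)| \ge |\{a, x\}| = 2$, $|N(c) \cap N(d)| \ge |\{a, y\}| = 2$, $|N(b) \cap N(d)| \ge 1$, $|N(b) \cap N(c) \cap N(d)| \ge 1$, together with $|N(b)|, |N(c)|, |N(d)| \le 4$, inclusion-exclusion yields $|N(b) \cup N(c) \cup N(d)| \le 8$. Hence at most $3 + (8 - 1) = 10 < 12$ colors are forbidden for $a$, so a color remains, and $\alpha$ extends to a distance-$2$ $12$-coloring of $G$, contradicting minimality.

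The only real (and minor) obstacle is checking that $G'$ stays in $\mathcal{F}$ and that the length-$2$ paths between the neighbors of $a$ are preserved after the modification; both follow immediately from the face-merging picture at $a$ and the degree drop caused by deleting $a$. The counting bound is loose but sufficient.
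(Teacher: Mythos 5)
Your proposal is correct and follows essentially the same route as the paper: delete the $3$-vertex $a$, add an edge between its two neighbors not shared by the two $4$-faces (the paper's $G-a+bf$, your $G-a+bd$), invoke minimality, and observe that at most $10<12$ colors are forbidden at $a$, so the coloring extends. Your count of $10$ matches the paper's $2\cdot 3+2+1+1=10$; you merely spell out the planarity and distance-$2$ preservation checks that the paper leaves implicit.
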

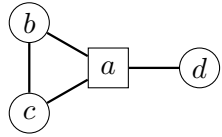
\begin{figure}[hbtp]
        \begin{center}
        \tikzstyle{vertex}=[circle,draw, minimum size=15pt, scale=1, inner sep=1pt]
        \tikzstyle{vertexa}=[circle,draw, minimum size=15pt, scale=1, inner sep=1pt, fill=black!5, very thick]
        \tikzstyle{vertexbc}=[circle,draw, minimum size=15pt, scale=1, inner sep=1pt, fill=black!5]
        \tikzstyle{3vertex}=[draw, minimum size=15pt, scale=1, inner sep=1pt]
        \tikzstyle{vertexcolor}=[circle,draw, minimum size=7pt, scale=1, inner sep=1pt, fill=black]

        \begin{tikzpicture}[scale=1.2]
            \node (a) at (0,0) [3vertex] {$a$};
            \node (b) at (0, 1) [vertex] {$d$};
            \node (c) at (-1, 0) [vertex] {$b$};
            \node (d) at (-1,1) [vertex] {$c$};
            \node (e) at (1, 0) [vertex] {$f$};
            \node (f) at (1, 1) [vertex] {$e$};
            
            \draw [-, line width=0.03cm] (a) to (b);
            \draw [-, line width=0.03cm] (b) to (d);
            \draw [-, line width=0.03cm] (a) to (c);
            \draw [-, line width=0.03cm] (c) to (d);
            \draw [-, line width=0.03cm] (a) to (e);
            \draw [-, line width=0.03cm] (e) to (f);
            \draw [-, line width=0.03cm] (f) to (b);

        \end{tikzpicture}
        \end{center}
        \caption{Forbidden configuration  of Lemma~\ref{lem:config3}}
        \label{fig:sconf3}
    \end{figure}
\begin{proof}
Assume that $G$ contains a $3$-vertex $a$ incident to two $4$-faces $[a,b,c,d]$ and $[a,d,e,f]$ (see Figure~\ref{fig:sconf3} for an illustration). Define $G' = G - a + bf\in\mathcal{F}$. By minimality, it admits a distance-$2$ $12$-coloring. 

This coloring is a partial coloring of $G$ and forbids at most $2\cdot3 + 2 + 1 + 1 = 10 < 12$ colors for $a$, so it can be extended into a distance-$2$ $12$-coloring, which completes the proof.
\end{proof}

The eight remaining configurations $C_0$ to $C_7$ are respectively illustrated by Figures~\ref{fig:sconf4} to~\ref{fig:conf4}. In each drawing, $3$-vertices are denoted by squares and $4^-$-vertices by circles (i.e. we do not know if they have degree $3$ or $4$). On the right, each vertex is either black, meaning we do not have to color it (these vertices will be in $G'$), or it is labeled with the number of available colors it has.

\begin{lemma}\label{lem:configR0}
Configuration $C_0$ is reducible.
\end{lemma}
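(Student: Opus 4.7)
The plan is to follow the same scheme as in Lemmas~\ref{lem:config1}--\ref{lem:config3}: remove a small set $S$ of vertices from the configuration $C_0$ (and possibly add a few edges to keep the result in $\mathcal{F}$) to obtain a smaller graph $G'$. By minimality of $G$, the graph $G'$ admits a distance-$2$ $12$-coloring $\alpha$, and the task reduces to extending $\alpha$ to the vertices of $S$. For each $v\in S$ we let $\ell(v)$ denote $12$ minus the number of colors already used within distance $2$ from $v$ in $G'$, and we must show that the auxiliary graph $H$ (whose vertices are $S$ and whose edges link pairs within distance $2$ in $G$) is $\ell$-choosable.

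First I would choose $S$ so that the vertex degrees and face incidences in $C_0$ produce lists as large as possible. A good heuristic is to pick a vertex of minimal degree inside the configuration, together with at most one of its neighbors that lies on many short faces, because each $3$- or $4$-face shared with a removed vertex saves one forbidden color. Any edges that had to be added to $G'$ (for instance, to eliminate $2$-vertices created by the removal, as in the proof of Lemma~\ref{lem:deg2}) are inserted only between former neighbors of removed vertices; a planar embedding is preserved because the configurations in $\mathcal{C}$ are drawn as plane subgraphs, and the maximum degree stays at $4$ since each such addition compensates an edge removal.

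Next I would carry out the list-size computation. For each $v \in S$, the number of neighbors and distance-$2$ neighbors in $G\setminus S$ gives an immediate upper bound on $12-\ell(v)$. When the naive bound is not sufficient, I would improve it by exploiting Theorem~\ref{th:Csep}: any coincidence that would make two vertices of $G\setminus S$ within distance $2$ of the same $v\in S$ would yield a short separating cycle through $C_0$ and its reduced neighborhood, contradicting the fact that $G$ has no $4$-balanced separating cycle of length $\leq 5$. This step is exactly the kind of structural argument that the purely local heuristic from Section~\ref{subsec:heur} cannot perform by itself.

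Finally, I would prove that $H$ is $\ell$-choosable. Even though $\A$ fails to certify this outright (which is why $C_0$ is in the leftover list), the matrix of inclusion $M$ it returns is still valid, so I may invoke Lemma~\ref{prop:incl}: if some pair $u,v$ with $M(u,v)=1$ satisfies $\ell(u)<\ell(v)$, we are done. Otherwise I would color one well-chosen vertex $v$ by hand with a color $c$ from $L(v)\setminus L(u)$ (whose existence follows from the structural argument above), update the lists, and recurse, which should leave a residual graph that is already handled by $\A$. The main obstacle will be this last step: guaranteeing that for the specific shape of $C_0$ the inclusion $L(v)\subseteq L(u)$ can always be broken, which requires a short case analysis on the residual degrees in $G'$ of the two or three vertices of $S$ adjacent to the boundary of $C_0$, and using Theorem~\ref{th:Csep} once more to exclude the otherwise problematic configurations.
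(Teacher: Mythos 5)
Your proposal is a plan rather than a proof: at no point do you commit to a concrete reduced graph $G_C$, compute the actual list sizes $\ell(v)$, or verify $\ell$-choosability of the auxiliary graph $H$. You yourself flag the decisive step (``guaranteeing that \dots the inclusion $L(v)\subseteq L(u)$ can always be broken'') as the main obstacle and leave it to an unspecified case analysis, so the argument as written does not establish reducibility of $C_0$. Moreover, the machinery you invoke is not what this configuration needs: Theorem~\ref{th:Csep} and the inclusion matrix are the tools the paper reserves for the harder configurations $C_1$--$C_7$, and nothing in your sketch shows how they would close the gap for $C_0$ specifically. The proposal to delete vertices and then add edges to repair membership in $\mathcal{F}$ also introduces avoidable complications (planarity and maximum-degree bookkeeping) that you only assert, not check.

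For comparison, the paper's reduction of $C_0$ is a short direct argument that needs none of this. One deletes the single edge $ac$ (so $G'=G-ac\in\mathcal{F}$ trivially: no new vertices of small degree, no edge additions, planarity and $\Delta\le 4$ are untouched), takes a distance-$2$ $12$-coloring of $G'$ by minimality, and uncolors $a$, $b$, $c$. Counting the colored vertices within distance $2$ gives $\ell(a)\ge 1$, $\ell(b)\ge 2$, $\ell(c)\ge 3$, and coloring greedily in the order $a$, then $b$, then $c$ always leaves an available color at each step, yielding a distance-$2$ $12$-coloring of $G$. The key idea you are missing is precisely this choice of reduction (edge deletion rather than vertex deletion) together with an explicit list count that makes a plain greedy order suffice, with no appeal to separating cycles or to the matrix of inclusion.
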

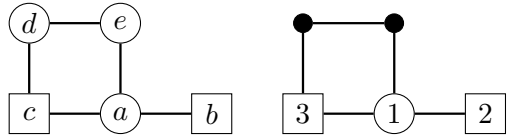
\begin{figure}[hbtp]
        \begin{center}
        \tikzstyle{vertex}=[circle,draw, minimum size=15pt, scale=1, inner sep=1pt]
        \tikzstyle{vertexa}=[circle,draw, minimum size=15pt, scale=1, inner sep=1pt, fill=black!5, very thick]
        \tikzstyle{vertexbc}=[circle,draw, minimum size=15pt, scale=1, inner sep=1pt, fill=black!5]
        \tikzstyle{3vertex}=[draw, minimum size=15pt, scale=1, inner sep=1pt, ]
        \tikzstyle{vertexcolor}=[circle,draw, minimum size=7pt, scale=1, inner sep=1pt, fill=black]

        \begin{tikzpicture}[scale=1.2]
            \node (a) at (0,0) [vertex] {$a$};
            \node (b) at (0, 1) [vertex] {$e$};
            \node (c) at (-1, 0) [3vertex] {$c$};
            \node (d) at (-1,1) [vertex] {$d$};
            \node (e) at (1, 0) [3vertex] {$b$};
            
            \draw [-, line width=0.03cm] (a) to (b);
            \draw [-, line width=0.03cm] (b) to (d);
            \draw [-, line width=0.03cm] (a) to (c);
            \draw [-, line width=0.03cm] (c) to (d);
            \draw [-, line width=0.03cm] (a) to (e);
            
            \node (a1) at (3,0) [vertex] {$1$};
            \node (b1) at (3, 1) [vertexcolor] {};
            \node (c1) at (2, 0) [3vertex] {$3$};
            \node (d1) at (2,1) [vertexcolor] {};
            \node (e1) at (4, 0) [3vertex] {$2$};
            
            \draw [-, line width=0.03cm] (a1) to (b1);
            \draw [-, line width=0.03cm] (b1) to (d1);
            \draw [-, line width=0.03cm] (a1) to (c1);
            \draw [-, line width=0.03cm] (c1) to (d1);
            \draw [-, line width=0.03cm] (a1) to (e1);
            
        \end{tikzpicture}
        \end{center}
        \caption{Configuration $C_0$}
        \label{fig:sconf4}
    \end{figure}
\begin{proof}
Assume that $G$ contains $C_0$. We use the notation of Figure~\ref{fig:sconf4}. Let $G'=G-ac\in\mathcal{F}$. By minimality, it admits a distance-$2$ $12$-coloring $\alpha$. Uncolor $a,b,c$. Now observe that no two colored vertices of $G$ within distance $2$ have the same color under $\alpha$.

For each vertex $v\in\{a,b,c\}$, let $L(v)$ be the set of free colors of $v$ and let $\ell(v)$ be the minimum possible size of $L(v)$ (see Figure~\ref{fig:sconf4} for an illustration). We have to show that the graph induced by $a,b,c$ is $\ell$-choosable. To this end, observe that we may color $a,b,c$ in that order. This yields a distance-$2$ $12$-coloring of $G$, which completes the proof.
\end{proof}

To reduce configurations $C_1$ to $C_6$ depicted on Figures~\ref{fig:conf1} to~\ref{fig:conf5}, we first apply Theorem~\ref{th:Csep} to prove that some vertices cannot be within distance $2$ in a graph containing them. This will then color them with the same color if they have a common available color, so that their common neighbors lose only one color in their lists. The following claim can be easily checked on each of the configurations: 
    \begin{figure}[hbtp]
        \begin{center}
        \tikzstyle{vertex}=[circle,draw, minimum size=15pt, scale=1, inner sep=1pt]
        \tikzstyle{vertexa}=[circle,draw, minimum size=15pt, scale=1, inner sep=1pt, fill=black!5, very thick]
        \tikzstyle{vertexbc}=[circle,draw, minimum size=15pt, scale=1, inner sep=1pt, fill=black!5]
        
        \begin{tikzpicture}[scale=0.8]
            \node (a) at (-0.5, 0.866025404) [vertex] {$g$};
            \node (b) at (0,0) [vertexa] {$a$};
            \node (c) at (1,0) [vertex] {$d$};
            \node (d) at (-1,0) [vertexbc] {$b$};
            \node (e) at (-1,-1) [vertex] {$e$};
            \node (f) at (0,-1) [vertex] {$f$};
            \node (g) at (1,-1) [vertexbc] {$c$};
            \node (h) at (1.866025404,-0.5) [vertex] {$h$};
         
            \draw [-, line width=0.03cm] (a) to (b);
            \draw [-, line width=0.03cm] (a) to (d);
            \draw [-, line width=0.03cm] (b) to (c);
            \draw [-, line width=0.03cm] (b) to (d);
            \draw [-, line width=0.03cm] (d) to (e);
            \draw [-, line width=0.03cm] (e) to (f);
            \draw [-, line width=0.03cm] (f) to (b);
            \draw [-, line width=0.03cm] (f) to (g);
            \draw [-, line width=0.03cm] (g) to (c);
            \draw [-, line width=0.03cm] (g) to (h);
            \draw [-, line width=0.03cm] (h) to (c);
            
            \node (a1) at (5.5, 0.866025404) [vertex] {$3$};
            \node (b1) at (6,0) [vertexa] {$7$};
            \node (c1) at (7,0) [vertex] {$5$};
            \node (d1) at (5,0) [vertexbc] {$4$};
            \node (e1) at (5,-1) [vertex] {$2$};
            \node (f1) at (6,-1) [vertex] {$5$};
            \node (g1) at (7,-1) [vertexbc] {$4$};
            \node (h1) at (7.866025404,-0.5) [vertex] {$2$};
         
            \draw [-, line width=0.03cm] (a1) to (b1);
            \draw [-, line width=0.03cm] (a1) to (d1);
            \draw [-, line width=0.03cm] (b1) to (c1);
            \draw [-, line width=0.03cm] (b1) to (d1);
            \draw [-, line width=0.03cm] (d1) to (e1);
            \draw [-, line width=0.03cm] (e1) to (f1);
            \draw [-, line width=0.03cm] (f1) to (b1);
            \draw [-, line width=0.03cm] (f1) to (g1);
            \draw [-, line width=0.03cm] (g1) to (c1);
            \draw [-, line width=0.03cm] (g1) to (h1);
            \draw [-, line width=0.03cm] (h1) to (c1);
        
        \end{tikzpicture}
        \end{center}
        \caption{Configuration $C_1$}
        \label{fig:conf1}
    \end{figure}
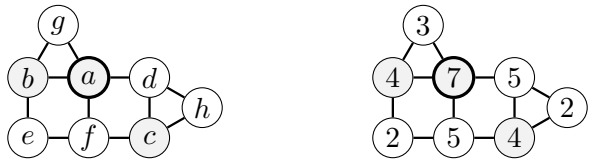
    
    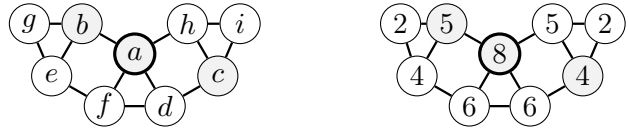
\begin{figure}[hbtp]
        \begin{center}
        \tikzstyle{vertex}=[circle,draw, minimum size=15pt, scale=1, inner sep=1pt]
        \tikzstyle{vertexa}=[circle,draw, minimum size=15pt, scale=1, inner sep=1pt, fill=black!5, very thick]
        \tikzstyle{vertexbc}=[circle,draw, minimum size=15pt, scale=1, inner sep=1pt, fill=black!5]
        
        \begin{tikzpicture}[scale=0.8]
            \node (a) at (-0.866025404, 0.5) [vertexbc] {$b$};
            \node (b) at (0,0) [vertexa] {$a$};
            \node (c) at (0.866025404,0.5) [vertex] {$h$};
            \node (d) at (-1.732050808,0.5) [vertex] {$g$};
            \node (e) at (-1.366025404, -0.366025404) [vertex] {$e$};
            \node (f) at (-0.5,- 0.866025404) [vertex] {$f$};
            \node (g) at (0.5,- 0.866025404) [vertex] {$d$};
            \node (h) at (1.366025404, -0.366025404) [vertexbc] {$c$};
            \node (i) at (1.732050808,0.5) [vertex] {$i$};
         
            \draw [-, line width=0.03cm] (a) to (b);
            \draw [-, line width=0.03cm] (a) to (d);
            \draw [-, line width=0.03cm] (b) to (c);
            \draw [-, line width=0.03cm] (d) to (e);
            \draw [-, line width=0.03cm] (e) to (f);
            \draw [-, line width=0.03cm] (f) to (b);
            \draw [-, line width=0.03cm] (f) to (g);
            \draw [-, line width=0.03cm] (c) to (i);
            \draw [-, line width=0.03cm] (g) to (h);
            \draw [-, line width=0.03cm] (h) to (c);
            \draw [-, line width=0.03cm] (h) to (i);
            \draw [-, line width=0.03cm] (b) to (g);
            \draw [-, line width=0.03cm] (a) to (e);
            
            \node (a1) at (5.13, 0.5) [vertexbc] {5};
            \node (b1) at (6,0) [vertexa] {8};
            \node (c1) at (6.866025404,0.5) [vertex] {5};
            \node (d1) at (4.37,0.5) [vertex] {2};
            \node (e1) at (4.64, -0.366025404) [vertex] {4};
            \node (f1) at (5.5,- 0.866025404) [vertex] {6};
            \node (g1) at (6.5,- 0.866025404) [vertex] {6};
            \node (h1) at (7.366025404, -0.366025404) [vertexbc] {4};
            \node (i1) at (7.732050808,0.5) [vertex] {2};
         
            \draw [-, line width=0.03cm] (a1) to (b1);
            \draw [-, line width=0.03cm] (a1) to (d1);
            \draw [-, line width=0.03cm] (b1) to (c1);
            \draw [-, line width=0.03cm] (d1) to (e1);
            \draw [-, line width=0.03cm] (e1) to (f1);
            \draw [-, line width=0.03cm] (f1) to (b1);
            \draw [-, line width=0.03cm] (f1) to (g1);
            \draw [-, line width=0.03cm] (c1) to (i1);
            \draw [-, line width=0.03cm] (g1) to (h1);
            \draw [-, line width=0.03cm] (h1) to (c1);
            \draw [-, line width=0.03cm] (h1) to (i1);
            \draw [-, line width=0.03cm] (b1) to (g1);
            \draw [-, line width=0.03cm] (a1) to (e1);

        \end{tikzpicture}
        \end{center}
        \caption{Configuration $C_2$}
        \label{fig:conf2}
    \end{figure}
    
    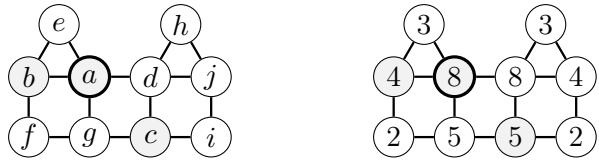
\begin{figure}[hbtp]
        \begin{center}
        \tikzstyle{vertex}=[circle,draw, minimum size=15pt, scale=1, inner sep=1pt]
        \tikzstyle{vertexa}=[circle,draw, minimum size=15pt, scale=1, inner sep=1pt, fill=black!5, very thick]
        \tikzstyle{vertexbc}=[circle,draw, minimum size=15pt, scale=1, inner sep=1pt, fill=black!5]
        
       \begin{tikzpicture}[scale=0.8]
            \node (a) at (-0.5, 0.866025404) [vertex] {$e$};
            \node (b) at (0,0) [vertexa] {$a$};
            \node (c) at (1,0) [vertex] {$d$};
            \node (d) at (1.5, 0.866025404) [vertex] {$h$};
            \node (e) at (-1,0) [vertexbc] {$b$};
            \node (f) at (-1,-1) [vertex] {$f$};
            \node (g) at (0,-1) [vertex] {$g$};
            \node (h) at (1,-1) [vertexbc] {$c$};
            \node (i) at (2, -1) [vertex] {$i$};
            \node (j) at (2, 0) [vertex] {$j$};
           
            \draw [-, line width=0.03cm] (a) to (b);
            \draw [-, line width=0.03cm] (a) to (e);
            \draw [-, line width=0.03cm] (b) to (c);
            \draw [-, line width=0.03cm] (c) to (d);
            \draw [-, line width=0.03cm] (d) to (j);
            \draw [-, line width=0.03cm] (e) to (f);
            \draw [-, line width=0.03cm] (g) to (b);
            \draw [-, line width=0.03cm] (f) to (g);
            \draw [-, line width=0.03cm] (b) to (e);
            \draw [-, line width=0.03cm] (g) to (h);
            \draw [-, line width=0.03cm] (h) to (c);
            \draw [-, line width=0.03cm] (h) to (i);
            \draw [-, line width=0.03cm] (i) to (j);
            \draw [-, line width=0.03cm] (j) to (c);
            
            \node (a1) at (5.5, 0.866025404) [vertex] {3};
            \node (b1) at (6,0) [vertexa] {8};
            \node (c1) at (7,0) [vertex] {8};
            \node (d1) at (7.5, 0.866025404) [vertex] {3};
            \node (e1) at (5,0) [vertexbc] {4};
            \node (f1) at (5,-1) [vertex] {2};
            \node (g1) at (6,-1) [vertex] {5};
            \node (h1) at (7,-1) [vertexbc] {5};
            \node (i1) at (8, -1) [vertex] {2};
            \node (j1) at (8, 0) [vertex] {4};
           
            \draw [-, line width=0.03cm] (a1) to (b1);
            \draw [-, line width=0.03cm] (a1) to (e1);
            \draw [-, line width=0.03cm] (b1) to (c1);
            \draw [-, line width=0.03cm] (c1) to (d1);
            \draw [-, line width=0.03cm] (d1) to (j1);
            \draw [-, line width=0.03cm] (e1) to (f1);
            \draw [-, line width=0.03cm] (g1) to (b1);
            \draw [-, line width=0.03cm] (f1) to (g1);
            \draw [-, line width=0.03cm] (b1) to (e1);
            \draw [-, line width=0.03cm] (g1) to (h1);
            \draw [-, line width=0.03cm] (h1) to (c1);
            \draw [-, line width=0.03cm] (h1) to (i1);
            \draw [-, line width=0.03cm] (i1) to (j1);
            \draw [-, line width=0.03cm] (j1) to (c1);

        \end{tikzpicture}
        \end{center}
        \caption{Configuration $C_3$}
        \label{fig:conf3}
    \end{figure}
    
    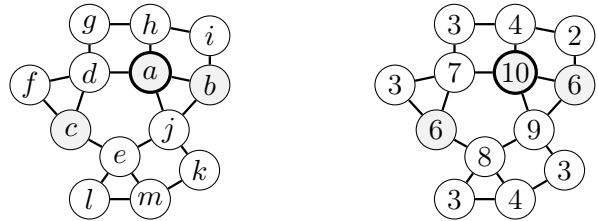
\begin{figure}[hbtp]
        \begin{center}
        \tikzstyle{vertex}=[circle, draw, minimum size=15pt, scale=1, inner sep=1pt]
        \tikzstyle{vertexa}=[circle,draw, minimum size=15pt, scale=1, inner sep=1pt, fill=black!5, very thick]
        \tikzstyle{vertexbc}=[circle,draw, minimum size=15pt, scale=1, inner sep=1pt, fill=black!5]
        
           \begin{tikzpicture}[scale=0.8]
            \node (a) at (-0.30901699437, -0.95105651629) [vertexbc] {$c$};
            \node (b) at (0,0) [vertex] {$d$};
            \node (c) at (1,0) [vertexa] {$a$};
            \node (d) at (1.30901699437, -0.95105651629) [vertex] {$j$};
            \node (e) at (0.5, -1.39680224667) [vertex] {$e$};
            \node (f) at (-168:1) [vertex] {$f$};
            \node (g) at (0, 0.8) [vertex] {$g$};
            \node (h) at (1, 0.8) [vertex]  {$h$};
            \node (i) at (1.97814760073, 0.60791169081) [vertex] {$i$};
            \node (j) at (1.97814760073, -0.20791169081) [vertexbc] {$b$};
            \node (k) at (1.80901699437, -1.62) [vertex]  {$k$};
            \node (l) at (0, -2.1) [vertex] {$l$};
            \node (m) at (1, -2.1) [vertex] {$m$};

            \draw [-, line width=0.03cm] (a) to (b);
            \draw [-, line width=0.03cm] (a) to (e);
            \draw [-, line width=0.03cm] (b) to (c);
            \draw [-, line width=0.03cm] (c) to (d);
            \draw [-, line width=0.03cm] (d) to (e);
            \draw [-, line width=0.03cm] (c) to (j);
            \draw [-, line width=0.03cm] (h) to (i);
            \draw [-, line width=0.03cm] (f) to (b);
            \draw [-, line width=0.03cm] (g) to (h);
            \draw [-, line width=0.03cm] (g) to (b);
            \draw [-, line width=0.03cm] (h) to (c);
            \draw [-, line width=0.03cm] (j) to (d);
            \draw [-, line width=0.03cm] (f) to (a);
            \draw [-, line width=0.03cm] (i) to (j);
            \draw [-, line width=0.03cm] (e) to (m);
            \draw [-, line width=0.03cm] (e) to (l);
            \draw [-, line width=0.03cm] (m) to (l);
            \draw [-, line width=0.03cm] (k) to (d);
            \draw [-, line width=0.03cm] (k) to (m);
            
             \node (a1) at (5.7, -0.95105651629) [vertexbc] {6};
            \node (b1) at (6,0) [vertex] {7};
            \node (c1) at (7,0) [vertexa] {10};
            \node (d1) at (7.30901699437, -0.95105651629) [vertex] {9};
            \node (e1) at (6.5, -1.39680224667) [vertex] {8};
            \node (f1) at (5.02185239927, -0.20791169081) [vertex] {3};
            \node (g1) at (6, 0.8) [vertex] {3};
            \node (h1) at (7, 0.8) [vertex]  {4};
            \node (i1) at (7.97814760073, 0.60791169081) [vertex] {2};
            \node (j1) at (7.97814760073, -0.20791169081) [vertexbc] {6};
            \node (k1) at (7.80901699437, -1.62) [vertex]  {3};
            \node (l1) at (6, -2.1) [vertex] {3};
            \node (m1) at (7, -2.1) [vertex] {4};

            \draw [-, line width=0.03cm] (a1) to (b1);
            \draw [-, line width=0.03cm] (a1) to (e1);
            \draw [-, line width=0.03cm] (b1) to (c1);
            \draw [-, line width=0.03cm] (c1) to (d1);
            \draw [-, line width=0.03cm] (d1) to (e1);
            \draw [-, line width=0.03cm] (c1) to (j1);
            \draw [-, line width=0.03cm] (h1) to (i1);
            \draw [-, line width=0.03cm] (f1) to (b1);
            \draw [-, line width=0.03cm] (g1) to (h1);
            \draw [-, line width=0.03cm] (g1) to (b1);
            \draw [-, line width=0.03cm] (h1) to (c1);
            \draw [-, line width=0.03cm] (j1) to (d1);
            \draw [-, line width=0.03cm] (f1) to (a1);
            \draw [-, line width=0.03cm] (i1) to (j1);
            \draw [-, line width=0.03cm] (e1) to (m1);
            \draw [-, line width=0.03cm] (e1) to (l1);
            \draw [-, line width=0.03cm] (m1) to (l1);
            \draw [-, line width=0.03cm] (k1) to (d1);
            \draw [-, line width=0.03cm] (k1) to (m1);

        \end{tikzpicture}
        \end{center}
        \caption{Configuration $C_4$}
        \label{fig:conf9}
    \end{figure}

        
    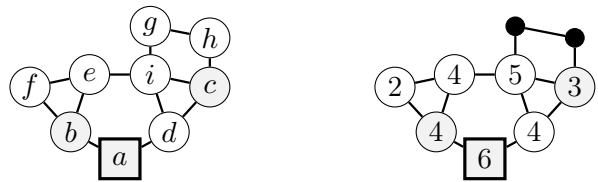
\begin{figure}[hbtp]
        \begin{center}
        \tikzstyle{vertex}=[circle, draw, minimum size=15pt, scale=1, inner sep=1pt]
        \tikzstyle{3vertex}=[draw, minimum size=15pt, scale=1, inner sep=1pt,  fill=black!5, very thick]
        \tikzstyle{vertexa}=[circle,draw, minimum size=15pt, scale=1, inner sep=1pt, fill=black!5, very thick]
        \tikzstyle{vertexbc}=[circle,draw, minimum size=15pt, scale=1, inner sep=1pt, fill=black!5]
        \tikzstyle{vertexcolor}=[circle,draw, minimum size=7pt, scale=1, inner sep=1pt, fill=black]
        
       \begin{tikzpicture}[scale=0.8]
            \node (a) at (-0.30901699437, -0.95105651629) [vertexbc] {$b$};
            \node (b) at (0,0) [vertex] {$e$};
            \node (c) at (1,0) [vertex] {$i$};
            \node (d) at (1.30901699437, -0.95105651629) [vertex] {$d$};
            \node (e) at (0.5, -1.39680224667) [3vertex] {$a$};
            \node (f) at (-168:1) [vertex] {$f$};
            \node (g) at (1, 0.8) [vertex]  {$g$};
            \node (h) at (1.97814760073, 0.60791169081) [vertex] {$h$};
            \node (i) at (1.97814760073, -0.20791169081) [vertexbc] {$c$};
            
            \draw [-, line width=0.03cm] (a) to (b);
            \draw [-, line width=0.03cm] (a) to (e);
            \draw [-, line width=0.03cm] (b) to (c);
            \draw [-, line width=0.03cm] (c) to (d);
            \draw [-, line width=0.03cm] (d) to (e);
            \draw [-, line width=0.03cm] (h) to (i);
            \draw [-, line width=0.03cm] (f) to (b);
            \draw [-, line width=0.03cm] (g) to (h);
            \draw [-, line width=0.03cm] (g) to (c);
            \draw [-, line width=0.03cm] (i) to (c);
            \draw [-, line width=0.03cm] (i) to (d);
            \draw [-, line width=0.03cm] (f) to (a);
            
            \node (a1) at (5.7, -0.95105651629) [vertexbc] {4};
            \node (b1) at (6,0) [vertex] {4};
            \node (c1) at (7,0) [vertex] {5};
            \node (d1) at (7.30901699437, -0.95105651629) [vertex] {4};
            \node (e1) at (6.5, -1.39680224667) [3vertex] {6};
            \node (f1) at (5.02185239927, -0.20791169081) [vertex] {2};
            \node (g1) at (7, 0.8) [vertexcolor]  {};
            \node (h1) at (7.97814760073, 0.60791169081) [vertexcolor] {};
            \node (i1) at (7.97814760073, -0.20791169081) [vertexbc] {3};
            
            \draw [-, line width=0.03cm] (a1) to (b1);
            \draw [-, line width=0.03cm] (a1) to (e1);
            \draw [-, line width=0.03cm] (b1) to (c1);
            \draw [-, line width=0.03cm] (c1) to (d1);
            \draw [-, line width=0.03cm] (d1) to (e1);
            \draw [-, line width=0.03cm] (h1) to (i1);
            \draw [-, line width=0.03cm] (f1) to (b1);
            \draw [-, line width=0.03cm] (g1) to (h1);
            \draw [-, line width=0.03cm] (g1) to (c1);
            \draw [-, line width=0.03cm] (i1) to (c1);
            \draw [-, line width=0.03cm] (i1) to (d1);
            \draw [-, line width=0.03cm] (f1) to (a1);

        \end{tikzpicture}
        \end{center}
        \caption{Configuration $C_5$}
        \label{fig:conf8}
    \end{figure}
    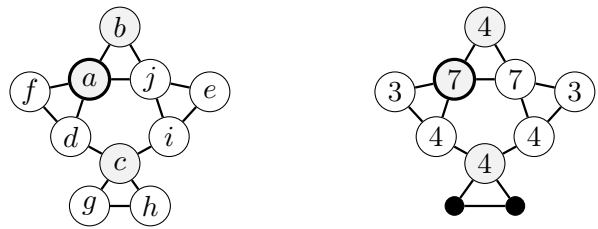
\begin{figure}[hbtp]
        \begin{center}
        \tikzstyle{vertex}=[circle, draw, minimum size=15pt, scale=1, inner sep=1pt]
         \tikzstyle{vertexa}=[circle,draw, minimum size=15pt, scale=1, inner sep=1pt, fill=black!5, very thick]
        \tikzstyle{vertexbc}=[circle,draw, minimum size=15pt, scale=1, inner sep=1pt, fill=black!5]
        \tikzstyle{vertexcolor}=[circle,draw, minimum size=7pt, scale=1, inner sep=1pt, fill=black]
        
       \begin{tikzpicture}[scale=0.8]
            \node (a) at (-0.30901699437, -0.95105651629) [vertex] {$d$};
            \node (b) at (0,0) [vertexa] {$a$};
            \node (c) at (1,0) [vertex] {$j$};
            \node (d) at (1.30901699437, -.95105651629) [vertex] {$i$};
            \node (e) at (0.5, -1.39680224667) [vertexbc] {$c$};
            \node (f) at (-168:1) [vertex] {$f$};
            \node (g) at (0.5, 0.866025404) [vertexbc] {$b$};
            \node (h) at (1.97814760073, -0.20791169081) [vertex]  {$e$};
            \node (i) at (0, -2.1) [vertex] {$g$};
            \node (j) at (1, -2.1) [vertex] {$h$};

            \draw [-, line width=0.03cm] (a) to (b);
            \draw [-, line width=0.03cm] (a) to (e);
            \draw [-, line width=0.03cm] (b) to (c);
            \draw [-, line width=0.03cm] (c) to (d);
            \draw [-, line width=0.03cm] (d) to (e);
            \draw [-, line width=0.03cm] (e) to (j);
            \draw [-, line width=0.03cm] (e) to (i);
            \draw [-, line width=0.03cm] (f) to (b);
            \draw [-, line width=0.03cm] (g) to (c);
            \draw [-, line width=0.03cm] (g) to (b);
            \draw [-, line width=0.03cm] (h) to (c);
            \draw [-, line width=0.03cm] (h) to (d);
            \draw [-, line width=0.03cm] (f) to (a);
            \draw [-, line width=0.03cm] (i) to (j);
            
            \node (a1) at (5.7, -0.95105651629) [vertex] {4};
            \node (b1) at (6,0) [vertexa] {7};
            \node (c1) at (7,0) [vertex] {7};
            \node (d1) at (7.30901699437, -.95105651629) [vertex] {4};
            \node (e1) at (6.5, -1.39680224667) [vertexbc] {4};
            \node (f1) at (5.02185239927, -0.20791169081) [vertex] {3};
            \node (g1) at (6.5, 0.866025404) [vertexbc] {4};
            \node (h1) at (7.97814760073, -0.20791169081) [vertex]  {3};
            \node (i1) at (6, -2.1) [vertexcolor] {};
            \node (j1) at (7, -2.1) [vertexcolor] {};

            \draw [-, line width=0.03cm] (a1) to (b1);
            \draw [-, line width=0.03cm] (a1) to (e1);
            \draw [-, line width=0.03cm] (b1) to (c1);
            \draw [-, line width=0.03cm] (c1) to (d1);
            \draw [-, line width=0.03cm] (d1) to (e1);
            \draw [-, line width=0.03cm] (e1) to (j1);
            \draw [-, line width=0.03cm] (e1) to (i1);
            \draw [-, line width=0.03cm] (f1) to (b1);
            \draw [-, line width=0.03cm] (g1) to (c1);
            \draw [-, line width=0.03cm] (g1) to (b1);
            \draw [-, line width=0.03cm] (h1) to (c1);
            \draw [-, line width=0.03cm] (h1) to (d1);
            \draw [-, line width=0.03cm] (f1) to (a1);
            \draw [-, line width=0.03cm] (i1) to (j1);

        \end{tikzpicture}
        \end{center}
        \caption{Configuration $C_6$}
        \label{fig:conf5}
    \end{figure}

        

\begin{claim}
\label{claim:bc}
If $G$ contains a configuration $C \in \{ C_1, \ldots,C_6\}$, then $b$ and $c$ are at distance at least $3$ in $G$. 
\end{claim}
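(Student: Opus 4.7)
The plan is a contradiction argument based on Theorem~\ref{th:Csep}. Suppose $G$ contains some configuration $C\in\{C_1,\ldots,C_6\}$ with $d_G(b,c)\le 2$, and fix a shortest $b$--$c$ path $P$ in $G$; then $P$ has at most one internal vertex, which we denote $x$ (note that $x$ need not belong to the configuration).

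First, I would inspect each of Figures~\ref{fig:conf1}--\ref{fig:conf5} and identify two internally vertex-disjoint $b$--$c$ paths of length at most $3$ that live inside the drawn configuration. In every case, one such path passes through the central $4$-vertex $a$ (for instance $b$--$a$--$d$--$c$ in $C_1$) while the other follows the outer part of the drawing. Concatenating $P$ with one of these internal paths yields a closed walk of length at most $5$, from which one extracts a cycle $C^\star$ of length at most $5$.

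Second, I would check that $C^\star$ is a separating cycle. The choice of the internal path matters here: I would pick the one whose side of the fixed embedding also contains the other internal path. This forces at least one vertex of the configuration (typically $a$ together with its remaining neighbors) to lie strictly inside $C^\star$, while vertices visible on the unbounded side of the drawing lie outside, so both regions are nonempty.

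Finally, I would verify that $C^\star$ is $4$-balanced. Since $|C^\star|\le 5$ and at most one vertex of $C^\star$ equals $x$, all other vertices of $C^\star$ belong to the configuration, have degree at most $4$, and have all their neighbors inside the configuration prescribed by the picture. A direct count then shows that at most four vertices of $C^\star$ have neighbors on both sides of it, contradicting Theorem~\ref{th:Csep}. The main obstacle is that this argument is fundamentally a case analysis over the six configurations: the two disjoint internal paths, the proof that $C^\star$ separates, and the balance count all depend on the specific picture, though in each case the verification is elementary from the figure.
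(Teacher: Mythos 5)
Your overall strategy is the intended one: the paper offers no written proof of this claim (it is presented as something ``easily checked''), and the check is indeed meant to go through Theorem~\ref{th:Csep}, exactly as you propose. As written, however, your argument has genuine gaps. First, the structural claim underlying your construction is false for $C_2$ (Figure~\ref{fig:conf2}): the only $b$--$c$ paths of length $3$ inside that configuration are $b$--$a$--$d$--$c$ and $b$--$a$--$h$--$c$, which share the internal vertex $a$, and the shortest drawn path avoiding $a$ is $b$--$e$--$f$--$d$--$c$, of length $4$; so there are no two internally disjoint paths of length at most $3$, and your theta-graph construction is unavailable there. Second, your separation step does not establish separation. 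Choosing $C^\star=P\cup Q_i$ so that the other internal path lies inside guarantees a nonempty inside, but you never exhibit a vertex outside; with that choice all configuration vertices typically end up inside (in $C_1$, once $a,d$ are inside, $g$ and $h$ are forced inside as well, being adjacent to $a$ and $d$), and nothing rules out an empty outside. The check that works is the opposite one: take the cycle through $a$ and use the faces prescribed by the configuration to place configuration vertices on \emph{both} sides --- in $C_1$, the $4$-faces $abef$ and $adcf$ put $e,f$ on one side of the cycle formed by $P$ and $b$--$a$--$d$--$c$, while the $3$-faces $abg$ and $cdh$ put $g,h$ on the other --- with separate treatment of the degenerate positions where the internal vertex $x$ of $P$ coincides with one of these witnesses.

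Moreover, even the corrected check is not a pure application of Theorem~\ref{th:Csep}. In $C_2$, if the connection is $b$--$h$--$c$ and the region bounded by $b,a,h$ contains no vertex of $G$, then every short cycle through $b$ and $c$ available in the figure has one side entirely covered by faces ($abh$, $abef$, $adch$, $afd$), so no $4$-balanced separating cycle of length at most $5$ is forced; the contradiction in that sub-case comes from $a$ becoming a $4$-vertex incident to two $3$-faces and a $4$-face, excluded by Lemma~\ref{lem:config2} (or from the face degrees prescribed around the configuration), not from Theorem~\ref{th:Csep}. Finally, your balance count rests on the premise that every cycle vertex ``has all its neighbors inside the configuration prescribed by the picture'', which is false: $b$, $c$ and several circled vertices may have undrawn neighbors, so one must first locate those neighbors via the prescribed faces. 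Note also that the count only matters when $|C^\star|=5$, where a single unbalanced vertex suffices, and identifying it is case-dependent: in $C_1$, on the cycle through $a$ and $d$, the vertex $a$ is in fact balanced ($g$ and $f$ lie on opposite sides), and it is $d$ whose off-cycle neighbors are all on one side.
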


We will need the following definition to conclude the reductions.  Given a graph $H$ and a function $\ell : V(H) \mapsto \mathbb{N}$, we say that $H$ is \emph{happy} if $H$ is empty or $H$ contains a happy vertex $v$ such that $H\setminus v$ is happy. Observe that if $H$ is happy then $H$ is $\ell$-choosable since we can color greedily its vertices from the last happy vertex to the first. Note that even if the algorithm $\A$ did not conclude to the reducibility, we make use the matrix of inclusions it computed.

\begin{lemma}\label{lem:configR1to4} 
Configurations $C_1$ to $C_6$ are reducible.
\end{lemma}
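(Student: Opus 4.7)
The plan is, for each configuration $C_i$ with $i\in\{1,\ldots,6\}$, to follow a common template inspired by the reductions already done in Section~\ref{sec:remaining}. First, I would define $G_i'$ as the graph obtained from $G$ by deleting the vertices that carry a list-size label in the right-hand drawing of the configuration, possibly adding a few edges inside the remaining graph to preserve $2$-connectivity and the distance-$2$ adjacencies among the survivors so that $G_i'\in\mathcal{F}$. By minimality of $G$, this graph admits a distance-$2$ $12$-coloring $\alpha$, which induces on each removed vertex $v$ a list $L(v)$ of free colors of size at least the integer $\ell(v)$ shown next to $v$ in the figure. The task then reduces to showing that the graph $H_i$ on the removed vertices, with an edge between each pair at distance at most $2$ in $G$, is $\ell$-choosable.

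The leverage comes from Claim~\ref{claim:bc}: in every one of these six configurations, $b$ and $c$ are at distance at least $3$ in $G$, so they are non-adjacent in $H_i$ and may share a color in a distance-$2$ coloring of $G$. I would dichotomize on whether $L(b)\cap L(c)=\emptyset$. In the easy branch $L(b)\cap L(c)\neq\emptyset$, pre-color $b$ and $c$ with a common color $\gamma$, delete them from $H_i$, and decrement by one the list size of every vertex at distance at most $2$ from $\{b,c\}$; one checks in each figure that the residual instance becomes happy in the sense of Section~\ref{subsec:heur}, and can therefore be greedily completed.

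In the hard branch $L(b)\cap L(c)=\emptyset$, the two lists are disjoint subsets of $[12]$, and here I would exploit the inclusion matrix $M$ produced by the heuristic $\A$: even when $\A$ fails to certify $\ell$-choosability outright, an entry $M(u,v)=1$ still tells us that $L(v)\not\subseteq L(u)$ is enough to conclude by Lemma~\ref{prop:incl}. The disjointness of $L(b)$ and $L(c)$, combined with the explicit list sizes in the figure, should propagate to force such a non-inclusion to arise among the matrix entries, concluding the reduction.

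The main obstacle is the tightness of the list sizes: in $C_4$, for instance, $\ell(b)=\ell(c)=6$, so disjoint lists exactly saturate the $12$ available colors and purely cardinality-based arguments are unavailable. Dealing with this forces a configuration-by-configuration inspection, in which the specific entries of $M$ must be combined with the structure of the lists at the neighbors of $b$ and $c$ to rule out every remaining possibility. I expect the bulk of the work to be this routine but tedious case analysis over the six figures, rather than any conceptual difficulty.
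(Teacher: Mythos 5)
Your overall template (invoke minimality, uncolor the configuration, use Claim~\ref{claim:bc} to color $b$ and $c$ alike, finish greedily on a happy remainder) is the right shape, but two genuine gaps remain. First, your choice of $G_i'$: deleting \emph{all} the labelled vertices and ``adding a few edges'' to restore $2$-connectivity and the distance-$2$ adjacencies among the survivors is not something you can do for free inside $\mathcal{F}$ --- the surviving boundary vertices already have degree close to $4$, and planarity plus the degree bound leave no room for the many chords such a repair would require (in $C_4$ you would delete about a dozen vertices). The paper sidesteps this entirely: it takes $G_C=G-a$, which is trivially in $\mathcal{F}$ with no added edges, colors it by minimality, and then \emph{uncolors} all configuration vertices (keeping $g,h$ colored for $C_5,C_6$); every distance-$2$ conflict of $G$ that passes through $a$ involves only uncolored vertices, so the induced partial coloring is valid, and the list sizes $\ell(v)$ in the figures are precisely the ones computed for this reduction.

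Second --- and this is the missing key idea --- your ``hard branch'' $L(b)\cap L(c)=\emptyset$ is never actually handled: you only express the hope that disjointness ``should propagate to force a non-inclusion'' in the matrix, and you yourself note that for $C_4$ (where $\ell(b)=\ell(c)=6$) a cardinality argument against the full palette of $12$ colors cannot work. The paper makes this branch vacuous by using the inclusion matrix \emph{centered at $a$}: when the heuristic $\mathcal{A}$ fails on $(H,\ell)$, its matrix certifies that $H$ is $\ell$-choosable unless \emph{every} uncolored vertex $v$ within distance $2$ of $a$ satisfies $L(v)\subseteq L(a)$ (this is exactly the mechanism of Lemma~\ref{prop:incl}). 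In particular $L(b)\cup L(c)\subseteq L(a)$, and since in each of $C_1,\dots,C_6$ the figures give $\ell(b)+\ell(c)>\ell(a)$ (e.g.\ $6+6=12>10$ for $C_4$), the two lists are too large to sit disjointly inside $L(a)$; hence $b$ and $c$ always share a free color, Claim~\ref{claim:bc} lets you assign it to both, and one checks that $H-b-c$ is happy. In other words, the cardinality comparison must be made against $L(a)$ via this containment, not against the $12$ colors; without that step your disjoint case has no argument, so the proposal as written does not yet prove the lemma.
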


\begin{proof}
Assume that $G$ contains such a configuration $C$. Define $G_C = G - a$, which admits a distance-$2$ $12$-coloring by minimality. Uncolor all vertices $v$ of $C$, except in $C\in\{C_5,C_6\}$ where we keep $g,h$ colored. Let $H$ be the subgraph of $G^2$ induced by the uncolored vertex of $C$. For each vertex of $H$, denote by $L(v)$ be the set of free colors for $v$ and let $\ell(v)$ be the minimum possible size of $L(v)$ (see Figures~\ref{fig:conf1} to~\ref{fig:conf5}). 

Applying the algorithm $\A$ on $(H,\ell)$, we obtain that $H$ is $\ell$-choosable unless every vertex $v$ within distance $2$ from $a$ satisfies $L(v) \subseteq L(a)$. 

In particular, we have $L(b) \cup L(c) \subseteq L(a)$ and $|L(b) \cup L(c)| > |L(a)|$, hence $b$ and $c$ have a common free color. By Claim~\ref{claim:bc}, $b$ and $c$ are not neighbors in $G^2$, so we can color them both with that common free color. Afterwards, one can easily check that the graph $H - b - c$ is happy, which provides a distance-$2$ $12$-coloring of $G$ and concludes the proof.
\end{proof}

We end this section with the reduction of the last configuration $C_7$. 

\begin{lemma}
Configuration $C_7$ is reducible.
\end{lemma}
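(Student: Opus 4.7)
The plan is to mirror the template established for configurations $C_0$ through $C_6$. I would first define the reduced graph $G'$ by removing the central vertex $a$ of $C_7$ (possibly deleting or contracting one additional edge so as to keep $G'\in\mathcal{F}$), and appeal to minimality of $G$ to obtain a distance-$2$ $12$-coloring $\alpha$ of $G'$. I would then uncolor precisely those vertices of $C_7$ that lie within distance $2$ of $a$ in $G$, leaving any more remote vertices of the configuration colored as under $\alpha$ (the analogue of keeping $g,h$ colored in $C_5$ and $C_6$), so as to constrain the surviving lists as tightly as possible.

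Next I would set $H$ to be the subgraph of $G^2$ induced by the uncolored vertices, equipped with the list-size function $\ell(v) = 12 - |\{u:d_G(u,v)\le 2,\ u\ \text{colored}\}|$. As a first attempt I would run the heuristic $\A(H,\ell)$ of Section~\ref{subsec:heur}; if it already returns \texttt{True} the reduction is complete. If not, the inclusion matrix $M$ produced by $\A$ should certify $L(v)\subseteq L(a)$ for every $v\neq a$ in $H$, which forces $|L(b)\cup L(c)|>|L(a)|$ for some distinguished pair $b,c$ of vertices in $C_7$ and hence $L(b)\cap L(c)\neq\emptyset$.

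At that point I would invoke Theorem~\ref{th:Csep}: the structure of $C_7$ should embed a short separating cycle through $b$ and $c$ with at most four balanced vertices, ruling out $d_G(b,c)\le 2$. Consequently $b$ and $c$ may be assigned a common color from $L(b)\cap L(c)$. After this identification, I expect $H-b-c$ to be happy in the sense introduced just before Lemma~\ref{lem:configR1to4}, so a greedy extension in the happy ordering---together with the color given to $a$---yields a distance-$2$ $12$-coloring of $G$, contradicting minimality.

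The main obstacle, exactly as for $C_4$ and $C_6$, is locating the right pair $(b,c)$: the pair must be close enough to $a$ for their lists to force the inclusion $L(b)\cup L(c)\subseteq L(a)$, lie on a $\le\!5$-cycle that is $4$-balanced separating so that Theorem~\ref{th:Csep} applies, and leave a happy residual graph after merging. If no such pair is directly available in $C_7$, the fallback is the same surgery used in $C_5$--$C_6$, namely keeping one or two extra ``anchor'' vertices colored under $\alpha$ in order to shrink the problem lists before re-applying $\A$.
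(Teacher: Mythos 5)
Your plan is essentially the template of Lemma~\ref{lem:configR1to4}, but for $C_7$ that template genuinely fails, and the failure is exactly at the step you flag as ``the main obstacle''. In $C_7$ (Figure~\ref{fig:conf4}) the deleted vertex is the $4$-vertex $b$ with $\ell(b)=10$, and among the uncolored vertices there is \emph{no} pair $x,y$ at distance at least $3$ in $G$ with $\ell(x)+\ell(y)>\ell(b)$: the only pairs whose list sizes sum to more than $10$ (such as $(c,i)$, $(i,a)$, $(i,g)$, $(i,d)$) are all within distance $2$, while the pairs to which Theorem~\ref{th:Csep} could apply (drawn from $\{a,h,k\}$, with $\ell=4,3,4$) sum to at most $8$. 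So the pigeonhole ``$|L(x)\cup L(y)|>|L(a)|$ forces a common color'' that drives $C_1$--$C_6$ never triggers, and your fallback of keeping a few anchor vertices colored only shrinks lists further, making things worse rather than better. Moreover, even if a pair with a common color were found, merging it is not enough here: the residual graph contains the vertices $e,f,l$ with lists of size only $2$, so happiness of $H-\{x,y\}$ is not automatic and depends on the merged color avoiding those small lists.

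The paper's actual argument supplies the missing ideas. After deleting $b$ and uncoloring all of $\{a,\dots,l\}$, it first proves a structural claim by a sequential-coloring argument (using two colors of $L(b)\setminus L(i)$ and the order $e,f,l,a,d,h,j,g,k,c$): either $G$ can be colored directly, or $L(e)=L(f)=L(l)$ is a fixed list of size $2$. Then, by the inclusion matrix and the symmetry between $b$ and $c$, one may assume $L(b)=L(c)$ contains every other list; distance/Theorem~\ref{th:Csep} arguments force $L(e)$ to be disjoint from $L(h)$ and $L(k)$ (otherwise one colors a pair alike and the rest is happy); and finally a pigeonhole among the \emph{three} vertices $a,h,k$ (sizes $4+3+4=11>10$) yields two of them with a common color, which by Theorem~\ref{th:Csep} are at distance at least $3$ and whose common color, crucially, avoids $L(e)=L(f)=L(l)$, so the residual graph is happy. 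None of these steps (the size-$2$ claim, the disjointness of $L(e)$ from $L(h),L(k)$, the three-way pigeonhole, and the check that the merged color misses the size-$2$ lists) appears in your proposal, and without them the reduction of $C_7$ does not go through.
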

    \begin{figure}[hbtp]
        \begin{center}
        \tikzstyle{vertex}=[circle, draw, minimum size=15pt, scale=1, inner sep=1pt]
        \tikzstyle{vertexa}=[circle,draw, minimum size=15pt, scale=1, inner sep=1pt, fill=black!5, very thick]
        \tikzstyle{vertexbc}=[circle,draw, minimum size=15pt, scale=1, inner sep=1pt, fill=black!5]
        
       \begin{tikzpicture}[scale=0.8]
            \node (a) at (-0.30901699437, -0.95105651629) [vertexbc] {$a$};
            \node (b) at (0,0) [vertexa] {$b$};
            \node (c) at (1,0) [vertex] {$c$};
            \node (d) at (1.30901699437, -.95105651629) [vertex] {$d$};
            \node (e) at (0.5, -1.39680224667) [vertex] {$e$};
            \node (f) at (-1.17504239816,  -0.45105651629) [vertex] {$f$};
            \node (g) at (-0.86602540378, 0.5) [vertex] {$g$};
            \node (h) at (-0.36602540378, 1.366025404) [vertexbc] {$h$};
            \node (i) at (0.5, 0.866025404) [vertex] {$i$};
            \node (j) at (1.36602540378, 1.366025404) [vertex] {$j$};
             \node (k) at (1.86602540378, 0.5) [vertexbc] {$k$};
            \node (l) at (2.17504239816,  -0.45105651629) [vertex] {$l$};
           
            \draw [-, line width=0.03cm] (a) to (b);
            \draw [-, line width=0.03cm] (a) to (e);
            \draw [-, line width=0.03cm] (b) to (c);
            \draw [-, line width=0.03cm] (c) to (d);
            \draw [-, line width=0.03cm] (d) to (e);
            \draw [-, line width=0.03cm] (c) to (i);
            \draw [-, line width=0.03cm] (b) to (i);
            \draw [-, line width=0.03cm] (f) to (g);
            \draw [-, line width=0.03cm] (g) to (h);
            \draw [-, line width=0.03cm] (j) to (k);
            \draw [-, line width=0.03cm] (k) to (l);
            \draw [-, line width=0.03cm] (g) to (b);
            \draw [-, line width=0.03cm] (c) to (k);
            \draw [-, line width=0.03cm] (h) to (i);
            \draw [-, line width=0.03cm] (f) to (a);
            \draw [-, line width=0.03cm] (d) to (l);
            \draw [-, line width=0.03cm] (i) to (j);
            
            \node (a1) at (5.79, -0.95105651629) [vertexbc] {4};
            \node (b1) at (6,0) [vertexa] {10};
            \node (c1) at (7,0) [vertex] {10};
            \node (d1) at (7.30901699437, -.95105651629) [vertex] {4};
            \node (e1) at (6.5, -1.39680224667) [vertex] {2};
            \node (f1) at (4.92,  -0.45105651629) [vertex] {2};
            \node (g1) at (5.23, 0.5) [vertex] {4};
            \node (h1) at (5.73, 1.366025404) [vertexbc] {3};
            \node (i1) at (6.5, 0.866025404) [vertex] {8};
            \node (j1) at (7.36602540378, 1.366025404) [vertex] {3};
             \node (k1) at (7.86602540378, 0.5) [vertexbc] {4};
            \node (l1) at (8.17504239816,  -0.45105651629) [vertex] {2};
           
            \draw [-, line width=0.03cm] (a1) to (b1);
            \draw [-, line width=0.03cm] (a1) to (e1);
            \draw [-, line width=0.03cm] (b1) to (c1);
            \draw [-, line width=0.03cm] (c1) to (d1);
            \draw [-, line width=0.03cm] (d1) to (e1);
            \draw [-, line width=0.03cm] (c1) to (i1);
            \draw [-, line width=0.03cm] (b1) to (i1);
            \draw [-, line width=0.03cm] (f1) to (g1);
            \draw [-, line width=0.03cm] (g1) to (h1);
            \draw [-, line width=0.03cm] (j1) to (k1);
            \draw [-, line width=0.03cm] (k1) to (l1);
            \draw [-, line width=0.03cm] (g1) to (b1);
            \draw [-, line width=0.03cm] (c1) to (k1);
            \draw [-, line width=0.03cm] (h1) to (i1);
            \draw [-, line width=0.03cm] (f1) to (a1);
            \draw [-, line width=0.03cm] (d1) to (l1);
            \draw [-, line width=0.03cm] (i1) to (j1);
            
        \end{tikzpicture}
        \end{center}
        \caption{Configuration $C_7$}
        \label{fig:conf4}
    \end{figure}
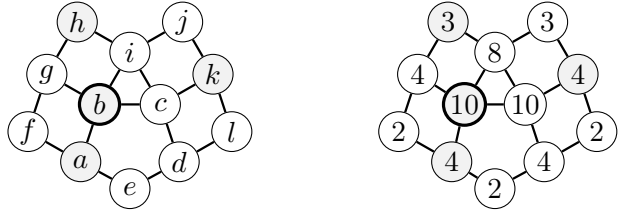

\begin{proof}
Assume that $G$ contains $C_7$. We follow the notation of Figure~\ref{fig:conf4}. Let $G'=G-b\in\mathcal{F}$, and assume that $G'$ admits a distance-$2$ $12$-coloring. Uncolor all vertices of $C_7$. For each uncolored vertex $v$, let $L(v)$ be the set of free colors of $v$ and let $\ell(v)$ be the minimum possible size of $L(v)$. Note that if two vertices are within distance 2 in $G$ using edges not depicted on Figure~\ref{fig:conf4}, then both their lists contain one more color than what is shown in the figure.

\begin{claim}
\label{claim:theclaim}
$L(e),L(f),L(l)$ are equal lists of size 2.
\end{claim}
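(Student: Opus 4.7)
The plan is to prove the claim by contradiction, showing that if $|L(v)| > 2$ for some $v \in \{e, f, l\}$, or if $L(e), L(f), L(l)$ are not all equal, then a distance-$2$ 12-coloring of $G$ would exist extending the coloring of $G'$, contradicting the irreducibility of $C_7$.

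First, I would consider the subgraph $H$ of $G^2$ induced by the uncolored vertices of $C_7$, and apply the algorithm $\A$ to $(H, \ell)$. Since $C_7$ is assumed irreducible, $H$ must not be $\ell$-choosable. From the definition of the inclusion matrix $M$ computed by $\A$, this forces the implication $L(v) \subseteq L(u)$ for every pair $(u,v)$ with $M(u,v) = 1$; otherwise $H$ would be $\ell$-choosable, a contradiction.

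Next, I would examine the specific entries of $M$ involving $e, f, l$. Because these three vertices have the minimum $\ell$-value (equal to 2), the inductive update rule for $M$ should yield $M(u,v) = 1$ for every ordered pair within $\{e, f, l\}$; this can be checked by running $\A$'s procedure on the corresponding restricted subgraphs, observing that coloring one of these vertices with a color outside the others' lists leaves a graph that $\A$ successfully reduces. Combined with the non-$\ell$-choosability of $H$, these entries yield $L(f) \subseteq L(e) \subseteq L(f)$, $L(l) \subseteq L(e) \subseteq L(l)$, and $L(f) \subseteq L(l) \subseteq L(f)$, giving the equality $L(e) = L(f) = L(l)$.

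Finally, to pin down $|L(e)| = 2$, I would argue that if $|L(e)| \geq 3$, the common list $L(e) = L(f) = L(l)$ contains three colors $c_1, c_2, c_3$ with which we could color $e, f, l$ (respectively), using the fact that these vertices are pairwise at distance at least $2$ to avoid colour clashes, invoking Theorem~\ref{th:Csep} if needed. After this partial coloring, each remaining vertex's list decreases by at most one, and the resulting subgraph should be happy in the sense of Section~\ref{subsec:heur}, yielding $\ell$-choosability by iterated application of Lemma~\ref{lem:happy}. The main obstacle is precisely this verification step: one must bound, for each remaining vertex, its updated list size against its degree in the subgraph $H - e - f - l$, using the list sizes and adjacency structure shown in Figure~\ref{fig:conf4}, and check that a suitable ordering of happy vertices exists all the way down to the empty graph.
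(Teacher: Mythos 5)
There is a genuine gap, in two places. First, your derivation of the equality $L(e)=L(f)=L(l)$ rests on the assertion that the inclusion matrix computed by $\A$ satisfies $M(u,v)=1$ for every ordered pair $u,v\in\{e,f,l\}$. Nothing supports this: $\A$ failed to reduce $C_7$, the paper only extracts from the matrix the entries of the form $M(b,\cdot)$ (and, by symmetry, $M(c,\cdot)$), and you give no verification that the recursive update rule ever sets the entries among $e,f,l$ to $1$. Saying it ``should'' hold because these vertices have the smallest $\ell$-value is not an argument — the update requires the recursive call $\A(H-v,\ell')$ to return \texttt{True}, and there is no reason offered why it does. Moreover, even granting those entries, mutual inclusion only gives equality of the three lists, so the whole weight of the claim falls on your size-$2$ step.

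That step is where the argument actually breaks. The statement ``after coloring $e,f,l$ each remaining vertex's list decreases by at most one'' is false: $a$ and $b$ are within distance $2$ of both $e$ and $f$, and $c$ and $d$ of both $e$ and $l$, so their lists can drop by two. Working with the guaranteed sizes of Figure~\ref{fig:conf4}, after coloring $e,f,l$ with three distinct colors the graph $H-e-f-l$ on $\{a,b,c,d,g,h,i,j,k\}$ has \emph{no} happy vertex at all: $b$, $c$ and $i$ have lists of guaranteed size $8$ but degree $8$ in the corresponding square graph, while each of $a,d,g,h,j,k$ has a list of size at most $4$ but degree $5$ (and extra non-depicted adjacencies raise list and degree together, so the margin never improves). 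Hence the greedy/happy completion cannot even start, and invoking Theorem~\ref{th:Csep} does not help since you already give $e,f,l$ distinct colors. This is precisely why the paper proves the claim by a different, more delicate route: it fixes two colors $\alpha,\beta\in L(b)\setminus L(i)$ and shows that if $L(e)\neq\{\alpha,\beta\}$ (in particular if $|L(e)|\geqslant 3$) one can color $e$ avoiding $\{\alpha,\beta\}$, then $f,l$, then $a,d,h,j,g,k,c$ in that specific order while maintaining the invariant that $L(b)$ retains a color absent from $L(i)$, and finally color $b$ and $i$ — contradicting minimality; this forces $L(e)=\{\alpha,\beta\}$, and the same argument applied to $f$ and $l$ forces $L(f)=L(l)=L(e)$. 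Your proposal is missing this ordering-plus-invariant idea, and the happiness-based substitute you propose in its place does not go through.
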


\begin{proof}
Let $\alpha,\beta$ be two colors in $L(b)\setminus L(i)$. Let us first assume that $L(e) \ne \{ \alpha,\beta \}$. So we first can color $e$ with another color, and then we color $f,l$. Observe that afterwards, $L(b)\neq L(i)$ since $\alpha$ or $\beta$ still lies in $L(b)$ but not in $L(i)$. Now, we color $a,d,h,j,g,k,c$ in that order. Each time we color a vertex, either both $L(b)$ and $L(i)$ lose a color (in which case $L(b)$ still contains $\alpha$ or $\beta$ but not $L(i)$), or one of the two does not lose a color. Therefore, at the end, when only $b$ and $i$ remain to be colored, we have $L(b)$ and $L(i)$ both contain at least one color, and if they both contain only one, then they are different. We may thus color $b$ and $i$, which yields a coloring of $G$, a contradiction. 

Therefore, we must have $L(e)=\{\alpha,\beta\}$. If $L(f)\neq L(e)$, we can color $f$ with a color not in $L(e)$, then $e$ with $\alpha$. Now $L(b)\neq L(i)$ since $L(b)$ contains $\beta$ but not $L(i)$. We can then apply again the previous argument to color $G$ in the same order. Therefore, $L(f)=L(e)$, and by symmetry, we get the result.
\end{proof}

Let $H$ be the subgraph of $G^2$ induced by $\{a,\ldots,l\}$. Applying the algorithm $\A$ on $(H,\ell)$, we obtain that $H$ is $\ell$-choosable unless $L(v) \subseteq L(b)$ for all $v$ within distance $2$ from $b$. By symmetry, the same holds for $c$, so we may assume that $L(b)=L(c)$, and $L(b)$ contains all the other lists. 

Observe that if $e,h$ are within distance $2$, then $L(e)$ has size at least 3, which is incompatible with Claim~\ref{claim:theclaim}. Otherwise, if $L(e)\cap L(h)\neq\varnothing$, we can color $e$ and $h$ with the same color. In that case, $H-\{e,h\}$ has a happy coloring. Therefore, we may assume that $L(e)$ and $L(h)$ are disjoint. 

By Theorem~\ref{th:Csep}, $e$ and $k$ cannot be within distance $2$ in $G$. Moreover, if $L(e)\cap L(k)\neq\varnothing$, we may color them with the same color and $H-\{e,k\}$ is happy. Therefore we may assume that $L(e)\cap L(k)=\varnothing$.

Since $L(a) \cup L(h) \cup L(k) \subseteq L(b)$ and $\ell(b)=10$, there are two vertices $x,y\in\{a,h,k\}$ such that $L(x)\cap L(y)\neq\varnothing$. By Theorem~\ref{th:Csep}, $x,y$ cannot be within distance $2$. Therefore, we can color $x$ and $y$ with their common color, which does not appear on $L(e),L(f)$ or $L(l)$. Let $\ell'$ be the updated length of the lists, so that $\ell'(z)=\ell(z)$ for each $z\in\{e,f,l\}$. We can then check that $H-\{x,y\}$ is happy for $\ell'$. Therefore, $G$ admits a distance-$2$ $12$-coloring, which concludes the proof.
\end{proof}

\subsection{Configurations in $\mathcal{C}_{6^+}$}
\label{sec:C6+}

In order to prove that configurations of $\mathcal{C}_{6^+}$ are reducible, we actually show that each configuration of $\mathcal{C}_{6+}$ gives rise to a configuration of $\mathcal{C}$ which is in turn reducible, as shown before. In other words, our set of reducible configurations can actually can be restricted to only $\mathcal{C}$.

Consider a configuration of $\mathcal{C}_{6+}$.
By construction, this means that $G$ contains an edge $uv$ on a $6^+$-face $f$ such that more than $\frac13$ transits through $uv$ when applying the rules corresponding to \textbf{T} (see Figure~\ref{fig:larger}).  Let $f'$ be the face sharing the edge $uv$ with $f$.

We now consider several cases depending on which rules among \textbf{T1-6} made some weight transit through $uv$. 
\tikzstyle{3vertex}=[fill=black, circle,minimum size=4pt, inner sep=0pt]
\tikzstyle{vertex}=[circle,draw, minimum size=4pt, inner sep=0pt]
\begin{itemize}
\item Assume that \textbf{T1} applies, \emph{i.e.} by symmetry, $\deg(u)=3$. Note that, by Lemma~\ref{lem:config1}, the face $f'$ cannot be a triangle hence \textbf{T4} cannot apply. Since more than $\frac13$ transits through $uv$, another rule must be used. This creates one of the following configurations of $\mathcal{C}$: \begin{tikzpicture}[scale=.5]
\node[3vertex] (v) at (0,0) {};
\node[3vertex] (v') at (1,0) {};
\draw (v) -- (v');
\end{tikzpicture} (\textbf{T1}), \begin{tikzpicture}[scale=.3,baseline=-1mm]
\node[3vertex] (a) at (0,0) {};
\node[vertex] (b) at (1,0) {};
\node[vertex] (c) at (2,0) {};

\node[vertex] (bc) at (1.5,0.866) {};
\node[vertex] (bc') at (1.5,-.866) {};

\draw (a) -- (b) -- (bc') -- (c) -- (bc) -- (b) -- (c);
\end{tikzpicture} (\textbf{T2}), \begin{tikzpicture}[scale=.3,baseline=-1mm]
\node[vertex] (a) at (0,0) {};
\node[vertex] (b) at (1,0) {};
\node[3vertex] (c) at (2,0) {};

\node[vertex] (ab) at (.5,0.866) {};
\node[vertex] (a') at (0,-1) {};
\node[vertex] (b') at (1,-1) {};
\node[vertex] (c') at (2,-1) {};

\draw (a) -- (ab) -- (b) -- (a) -- (a') -- (b') -- (b) -- (c) -- (c') -- (b');
\end{tikzpicture} (\textbf{T3}), \begin{tikzpicture}[scale=.3,baseline=-1mm]
\node[3vertex] (b) at (1,0) {};
\node[vertex] (c) at (2,0) {};
\node[vertex] (d) at (3,0) {};
\node[vertex] (e) at (2,-1) {};
\node[vertex] (f) at (3,-1) {};

\node[vertex] (cd) at (2.5,0.866) {};
\node[vertex] (ef) at (2.5,-1.866) {};

\draw (b) -- (c) -- (cd) -- (d) -- (f) -- (ef) -- (e) -- (c) -- (d);
\draw (e) -- (f);
\end{tikzpicture} or \begin{tikzpicture}[scale=.3,baseline=-1mm]
\node[vertex] (a) at (0,0) {};
\node[vertex] (b) at (1,0) {};

\node[vertex] (ab) at (.5,0.866) {};
\node[vertex] (a') at (0,-1) {};
\node[3vertex] (b') at (1,-1) {};

\draw (a) -- (ab) -- (b) -- (a) -- (a') -- (b') -- (b);
\end{tikzpicture} (\textbf{T5}). 
\item Assume that \textbf{T2} applies. If $f'$ is a triangle or \textbf{T3} or \textbf{T4} applies, we obtain a triangle edge-incident to a triangle and a $4^-$-face, which is reduced by our heuristic. Therefore, $f'$ is not a triangle and only \textbf{T2} or \textbf{T5} may apply. This creates  \begin{tikzpicture}[scale=.3,baseline=-1mm]
\node[vertex] (z) at (-1,0) {};
\node[vertex] (a) at (0,0) {};
\node[vertex] (b) at (1,0) {};
\node[vertex] (c) at (2,0) {};

\node[vertex] (bc) at (1.5,0.866) {};
\node[vertex] (bc') at (1.5,-.866) {};
\node[vertex] (az) at (-.5,0.866) {};
\node[vertex] (az') at (-.5,-.866) {};

\draw  (z) -- (az) -- (a) -- (az') -- (z) -- (a) -- (b) -- (bc') -- (c) -- (bc) -- (b) -- (c);
\end{tikzpicture} (\textbf{T2}) or \begin{tikzpicture}[scale=.3,baseline=-1mm]
\node[vertex] (a) at (0,0) {};
\node[vertex] (b) at (1,0) {};
\node[vertex] (c) at (2,0) {};
\node[vertex] (d) at (3,0) {};
\node[vertex] (e) at (2,-1) {};
\node[vertex] (f) at (3,-1) {};

\node[vertex] (cd) at (2.5,0.866) {};
\node[vertex] (ef) at (2.5,-1.866) {};
\node[vertex] (ab) at (.5,0.866) {};
\node[vertex] (ab') at (.5,-0.866) {};

\draw (a) -- (ab) -- (b) -- (ab') -- (a) -- (b) -- (c) -- (cd) -- (d) -- (f) -- (ef) -- (e) -- (c) -- (d);
\draw (e) -- (f);
\end{tikzpicture} (\textbf{T5}).
\item  Assume that \textbf{T3} applies, hence by definition, \textbf{T4} does not apply. We now get configuration $C_1$ or \begin{tikzpicture}[scale=.3,baseline=-1mm]
\node[vertex] (a) at (0,0) {};
\node[vertex] (b) at (1,0) {};
\node[vertex] (c) at (2,0) {};
\node[vertex] (cd) at (2.5,0.866) {};
\node[vertex] (ab) at (.5,0.866) {};
\node[vertex] (bc) at (1.5,0.866) {};
\draw (ab) -- (a) -- (b) -- (ab) -- (bc) -- (cd) -- (c) -- (b) -- (bc);
\end{tikzpicture} if \textbf{T5} applies. Moreover, if \textbf{T3} applies a second time in such a way that more than $\frac13$ transits through $uv$, then we obtain configuration $C_3$.
\item Assume that \textbf{T4} applies. Then it cannot be applied twice, hence \textbf{T5} is applied, which creates \begin{tikzpicture}[scale=.3,baseline=-1mm]
\node[vertex] (a) at (0,0) {};
\node[vertex] (b) at (1,0) {};
\node[vertex] (c) at (2,0) {};
\node[vertex] (cd) at (2.5,0.866) {};
\node[vertex] (ab) at (.5,0.866) {};
\node[vertex] (bc) at (1.5,0.866) {};
\draw (ab) -- (a) -- (b) -- (ab) -- (bc) -- (cd) -- (c) -- (b) -- (bc);
\end{tikzpicture}.
\item Finally, assume that \textbf{T5} is applied twice. Then both $u$ and $v$ are incident to triangles (that are vertex-incident to $f$). In that case, only $2\times\frac 16$ transits through $uv$, contradicting the hypothesis.
\end{itemize}

\section{Further work}

A first natural question raised by our work is the extension of our techniques to distance-$2$ choosability of planar graphs with maximum degree $4$. Most of our proof technique still applies to this case (namely, the use of Linear Programming, and the heuristic that automatically reduces the configurations). However, when reducing separating cycles (Section~\ref{sec:sepcyc}), we permute some colors, which is typically forbidden in the choosability setting. Due to that, our reductions of $C_1$ to $C_7$ would need more involved arguments. No hope is yet lost for these configurations to stay reducible, but the task seems more painful since one would need to tackle the cases where the vertices we want to color alike are actually within distance $2$ in $G$, which might require to consider many cases. 

Our second long-term goal would consist in constructing a fully automatic system to prove coloring results using (local) discharging. This requires several important improvements of  our system we describe below. 

First, we need to improve the heuristic. We could use an exact algorithm, but this indeed would have a prohibitive cost on the performances (see~\cite{hartke2016chromatic}). However, in the literature, the configurations are most of the time reduced manually using only a few methods. Implementing them would probably give enough power to obtain reasonable results and would probably allow us to have a strong enough program to prove that reducible configurations of many articles are indeed reducible. As a more concrete example, one can first implement the technique used to reduce the configurations $C_1$ to $C_7$, where we show that some lists of colors should be disjoint (and obtain a contradiction with the inclusion matrix). 

A second aspect is the choice of the rule templates. While the literature contains a wealth of different types of rules, most of the time, charge is not moved over large distances. The next step is then probably to implement several classic rule templates and see what can be proved using them. 

Last, when our Linear Program returns a list of configurations such that one of them has to be shown to be reducible, we often show that a sub-configuration of this configuration is actually reducible. We recognize such configurations by experience. Reducing these sub-configurations permits to drastically fasten the convergence of our algorithm. It is not completely clear how many rounds would be needed if we only try to reduce configurations output by the Linear Program rather than sub-configurations. In a long time perspective, it might be interesting to see if learning algorithms can be used to detect sub-configurations that are likely to be reducible.

\bibliographystyle{plain}
\bibliography{biblio}

\end{document}